\newcommand*\goodhyphen@{%
  \ifmmode\errmessage{Special hyphenation in math mode}\fi
% |\if| is true if |\next| is a control sequence, false if letter/character
  \if\noexpand\@let@token\relax\else
  \@tempswafalse
  \catcode`\a=11%letter
  \catcode`\*=12%other
  \ifcat\@let@token a\@tempswatrue\fi%letter
  \ifcat\@let@token *\@tempswatrue\fi%character
  \if@tempswa
    {\setbox\z@\hbox{-\@let@token}\setbox\tw@\hbox{-\hskip\z@skip\@let@token}%
    \hskip\dimexpr\wd\z@-\wd\tw@\relax}%
  \fi
  \fi
}
    \newcommand*\driver{}
    \newcommand*\driver{dvipdfmx}
\newcommand*\mynewtheorem[2]{%
  \newtheorem{#1}{#2}[section]%
  \expandafter\let\csname c@#1\endcsname=\c@thm
  \expandafter\def\csname#1autorefname\endcsname{#2}%
}
\renewcommand\thmhead[3]{%
  \thmname{#1}\thmnumber{\@ifnotempty{#1}{ }\@upn{#2}}%
  \thmnote{ {\the\thm@notefont#3}}}
\theoremstyle{definition}
\newtheorem*{rem}{Remark}
\newenvironment{emptythm}{\list{}{\leftmargin2\parindent\rightmargin2\parindent}%
  \item\relax\itshape
}{\endlist}
\renewcommand*\BibLabel{\Hy@raisedlink{\hyper@anchorstart{cite.\CurrentBib}\hyper@anchorend}[\thebib]}
\CheckCommand\PrintDOI[1]{DOI #1%
  \IfEmptyBibField{volume}{, (to appear in print)}{}}
\renewcommand*\PrintDOI[1]{\href{http://dx.doi.org/#1}{DOI #1}%
  \IfEmptyBibField{volume}{, (to appear in print)}{}}
\newlength\mathaxis
\newcommand*\centercolon[1]{\vcenter{\mathsurround0pt\hbox{$#1:$}}}
\newcommand*\coloneq{\ensuremath{\mathrel{\mathpalette\centercolon=}}}
\newcommand*{\Cc}{\mathbb{C}}
\newcommand*{\Qq}{\mathbb{Q}}
\newcommand*{\Rr}{\mathbb{R}}
\newcommand*{\Zz}{\mathbb{Z}}
\newcommand*{\DeclareMathOrd}[2]{%
 \@ifdefinable{#1}{\DeclareRobustCommand{#1}{{\mathord{\operator@font#2}}}}}
 \newcommand*{\co}{\:{:}\;}
 \DeclareMathOperator\Ext{Ext}
 \DeclareMathOperator\Tor{Tor}
 \DeclareMathOperator\Hom{Hom}
 \DeclareMathOperator\Aut{Aut}
 \DeclareMathOperator\Out{Out}
 \DeclareMathOperator\SL{\mathit{SL}}
 \DeclareMathOperator\GL{\mathit{GL}}
 \DeclareMathOperator\SO{\mathit{SO}}
 \DeclareMathOperator\connsum{\mkern-1mu\#}
 \DeclareMathOrd\fr{fr}
 \DeclareMathOrd\id{id}
 \DeclareMathOrd\pt{pt}
 \DeclareMathOrd\cpt{cpt}
 \DeclareMathOrd\BO{{\mathit{BO}}}
 \DeclareMathOrd\EO{{\mathit{EO}}}
 \DeclareMathOrd\CP{\Cc P}
 \newcommand*\loc{_{\textup{(0)}}}
 \newcommand*\Nloc{_{N,\textup{(0)}}}
 \newcommand*{\ab}{^\mathit{ab}}
 \newenvironment{myalphenum}{%
 \begin{list}{(\alph{enumi})~\hfill}{\usecounter{enumi}%
 \labelwidth1.7em
 \labelsep\z@
 \leftmargin\dimexpr\labelwidth+\parindent\relax
 \rightmargin\parindent%Experimental
 \topsep\medskipamount
 \itemsep\medskipamount
 \parsep\parskip
 \listparindent\parindent
 \partopsep\z@
 }}
 {\end{list}}
\DeclareTextCommand{\textleq}{PU}{\9042\144}% Unicode "Less-than Or Equal To", Hex 02264, Octal 042 144
\DeclareTextCommand{\textleq}{T1}{{\boldmath\ensuremath{\leq}}}
\DeclareTextCommand{\textgeq}{PU}{\9042\145}% Unicode "Greater-than Or Equal To", Hex 02265, Octal 042 145
\DeclareTextCommand{\textgeq}{T1}{{\boldmath\ensuremath{\geq}}}
\DeclareTextCommand{\textpm}{PU}{\9000\261}% Unicode "Plus-minus sign", Hex 000B1, Octal 000 261
\DeclareTextCommand{\textpm}{T1}{{\boldmath\ensuremath{\pm}}}
\DeclareTextCommand{\unicodespace}{PU}{ }
\DeclareTextCommand{\unicodespace}{T1}{\ensuremath{\mathrel{}{}}}
\newcommand*\Hopfian{Hopf\penalty\@M ian}
\begin{document}
\title{Orientation reversal of manifolds}
\author{\scshape Daniel Müllner}
\date{}
\maketitle
\begin{abstract}\noindent
We call a closed, connected, orientable manifold in one of the categories TOP, PL or DIFF \emph{chiral} if it does not admit an orientation-reversing automorphism and \emph{amphicheiral} otherwise. Moreover, we call a manifold \emph{strongly chiral} if it does not admit a self-map of degree $-1$. We prove that there are strongly chiral, smooth manifolds in every oriented bordism class in every dimension $\geq 3$. We also produce simply-connected, strongly chiral manifolds in every dimension $\geq\nolinebreak 7$. For every $k\geq 1$, we exhibit lens spaces with an orientation-reversing self-diffeomorphism of order $2^k$ but no self-map of degree $-1$ of smaller order.
\end{abstract}

\section{Introduction}
\let\savethethm\thethm
\renewcommand*\thethm{\Alph{thm}}
\let\savetheprop\theprop
\renewcommand*\theprop{\Alph{prop}}

In this paper, we study the question of whether a manifold admits an orientation-reversing self-map. Let $M$ be a closed, connected, orientable manifold in one of the categories TOP, PL or DIFF. We call $M$ \emph{chiral} if it does not admit an orientation-reversing automorphism in the respective category and \emph{amphicheiral} if it does. For the sake of clarity, we indicate the category by adverbs: e.\,g.\ a \emph{topologically chiral} manifold does not admit an orientation-reversing self-homeomorphism, whereas a \emph{smoothly amphicheiral} manifold is a differentiable manifold which admits an orientation-reversing self-diffeomorphism. We extend this by the notion of \emph{homotopical chirality/amphicheirality} when we consider homotopy self-equivalences. Chiral manifolds in the strongest sense do not admit self-maps of degree $-1$; we call them \emph{strongly chiral} and define \emph{weakly amphicheiral} as the opposite. The definition of chirality may be extended to non-closed manifolds and to oriented manifolds with several components, both of which are not in the focus of the present work.

Many familiar manifolds like spheres or orientable surfaces are smoothly amphicheiral: in these cases mirror-symmetric embeddings into $\Rr^n$ exist, and reflection at the “equatorial” hyperplane reverses the orientation. On the other hand, instances of strongly chiral manifolds have been known for many decades, e.\,g.\ the complex projective spaces $\CP^{2k}$ or some lens spaces in dimensions congruent 3 mod 4.

A basic question which was not fully answered for any type of chirality by the known examples is in which dimensions chiral manifolds exist. We solve this problem not only for all dimensions but also under the finer differentiation of oriented bordism. Trivially, a point is chiral, and all one- and two-dimensional manifolds are smoothly amphicheiral.

\begin{thm}\label{thm:A}
In every dimension $\geq 3$, every closed, smooth, oriented manifold is oriented bordant to a connected manifold of this type which is strongly chiral.
\end{thm}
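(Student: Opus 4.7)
The natural strategy is to fix, in each dimension $n\geq 3$, a closed, connected, smooth, oriented \emph{model manifold} $C^n$ which is (i)~oriented null-bordant, (ii)~strongly chiral, and (iii)~has a fundamental group rigid enough to control endomorphisms of free products with $\pi_1(C)$ as a free factor. Given any closed smooth oriented $M^n$ (after replacing its components by their connected sum, one may assume $M$ is connected), set $N\coloneq M\connsum C$. In oriented bordism, connect-sum agrees with disjoint union, so $[N]=[M]+[C]=[M]$ in $\Omega_n^{SO}$, and $N$ is connected by construction; the theorem is thus reduced to showing that no self-map of $N$ has degree $-1$.

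To build $C$: in dimension $3$, every closed oriented manifold is null-bordant, so any chiral closed hyperbolic $3$-manifold will do, its fundamental group being Mostow-rigid and \Hopfian{}. In dimensions $n\geq 4$ one constructs $C$ from a suitable aspherical rigid base manifold by surgeries and connect-sums with carefully chosen null-bordant pieces, arranging null-bordism, strong chirality, and $\pi_1$-rigidity simultaneously.

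The analysis of an arbitrary $f\co N\to N$ with $\deg f=\pm 1$ proceeds through $\pi_1$. By van Kampen, $\pi_1(N)=\pi_1(M)*\pi_1(C)$; the pinch map $p\co N\to C$ (collapsing $M\setminus B^n$) realises the projection $\pi_1(M)*\pi_1(C)\to\pi_1(C)$ on fundamental groups and has degree~$1$. Applying the Kurosh subgroup theorem to $p_*\circ f_*$ and using the rigidity of $\pi_1(C)$, one shows that $p\circ f$ is homotopic to $g\circ p$ for some self-map $g\co C\to C$. Comparing degrees then yields $\deg g=\deg f$, and strong chirality of $C$ forces $\deg f=+1$.

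The main obstacle is the construction of $C$: simultaneously realising null-bordism, strong chirality, and enough fundamental-group rigidity in \emph{every} dimension $n\geq 3$. In particular, altering the bordism class of $C$ (non-trivial already for $n=4$) without destroying the rigidity of $\pi_1(C)$ is the delicate technical point on which the whole reduction hinges.
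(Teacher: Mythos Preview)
Your reduction has a genuine gap at the factorisation step. You claim that for any degree~$\pm1$ self-map $f$ of $N=M\connsum C$ the composite $p\circ f$ is homotopic to $g\circ p$ for some $g\co C\to C$; equivalently, that $p_*\circ f_*\co\pi_1(M)*\pi_1(C)\to\pi_1(C)$ annihilates the normal closure of $\pi_1(M)$. But $M$ is \emph{arbitrary}, and no hypothesis on $\pi_1(C)$ alone can enforce this. Concretely, take $M=C$: then $C\connsum C$ carries an orientation-preserving self-diffeomorphism $f$ interchanging the two summands, and for this $f$ the map $p_*\circ f_*$ is (up to conjugation) projection onto the \emph{other} free factor, which does not factor through $p_*$. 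The Kurosh theorem describes how subgroups of a free product decompose; it says nothing about homomorphisms out of a free product respecting a chosen factor. The ``rigidity'' you invoke is never made precise and cannot do the required work once $\pi_1(M)$ is allowed to contain copies of~$\pi_1(C)$.

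The paper sidesteps this by reversing the roles and eliminating the free product. One first replaces the given manifold by a \emph{simply-connected} bordant representative $N$ (surgery below the middle dimension), and only then forms the connected sum with a fixed strongly chiral $C$ whose chirality is already visible in its first Postnikov stage $K(\pi_1(C),1)$---namely, the image of $[C]$ in $H_n(K(\pi_1(C),1))$ cannot be sent to its negative. Since $N$ is simply-connected, $\pi_1(C\connsum N)=\pi_1(C)$ and the collapse $C\connsum N\to K(\pi_1(C),1)$ is the first Postnikov approximation; functoriality of Postnikov towers then gives the factorisation for free. (In odd dimensions $C$ is an aspherical mapping torus of a torus; in even dimensions $\geq 6$ one uses a product of such a mapping torus with a chiral lens space; dimension~$4$ requires a separate construction with finite fundamental group.) Incidentally, your demand that $C$ be null-bordant is unnecessary: as $N$ ranges over all bordism classes, so does $C\connsum N$.
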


In the proof, we first construct strongly chiral manifolds in every dimension $\geq 3$ and then modify these to obtain examples in all bordism classes. The manifolds which we construct in the first step are aspherical in odd dimensions and contain aspherical manifolds as factors in a cartesian product in many even dimensions. In these cases, it is the structure of the fundamental group and its endomorphisms which exclude self-maps of degree $-1$. We therefore ask for other obstructions and restrict the analysis to simply-connected manifolds.
\begin{thm}\label{thm:B}
In dimensions 3, 5 and 6, every simply-connected, closed, smooth manifold is smoothly amphicheiral. The analogous statements hold in the topological and the PL categories. A simply-connected, closed, topological 4-manifold is topologically amphicheiral if its signature is zero. If the signature is nonzero, the manifold is even strongly chiral.

In every dimension $\geq7$ there is a simply-connected, closed, smooth, strongly chiral manifold.
\end{thm}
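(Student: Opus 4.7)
\emph{Low-dimensional amphicheirality.} Dimension $3$ is immediate from Perelman's proof of the Poincaré conjecture: a simply-connected closed $3$-manifold is homeomorphic to $S^{3}$, which is visibly amphicheiral in all three categories. In dimensions $5$ and $6$ I would invoke the classifications of simply-connected closed manifolds due to Barden (dim $5$) and Wall, Jupp, Zhubr (dim $6$); the classifying data (homology, second Stiefel--Whitney class, cup product structure, torsion linking forms) are manifestly invariant under orientation reversal, so $M$ and $-M$ always lie in the same diffeomorphism class.

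\emph{Topological $4$-manifolds.} Any degree $-1$ self-map $f$ induces an isometry from $(H^{2}M, Q_{M})$ to $(H^{2}M, -Q_{M})$, so $\sigma(M) = -\sigma(M) = 0$; this yields strong chirality whenever $\sigma(M) \neq 0$. When $\sigma(M) = 0$, the intersection form $Q_{M}$ is isomorphic to its negative---any indefinite unimodular integral form of zero signature is, by Hasse--Minkowski, a sum of hyperbolic planes or of $\langle 1\rangle \oplus \langle -1\rangle$'s and therefore self-dual under negation---and since the Kirby--Siebenmann invariant is orientation-independent, Freedman's classification realises the algebraic isomorphism $Q_{M} \cong -Q_{M}$ by an orientation-reversing self-homeomorphism.

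\emph{Strong chirality for $n \geq 7$.} For $n = 4k$ with $k \geq 2$ the space $\CP^{2k}$ works: from $H^{*}(\CP^{2k}) = \Zz[x]/(x^{2k+1})$ with $|x|=2$, every self-map satisfies $f^{*}(x) = ax$ for some $a \in \Zz$, so $\deg f = a^{2k} \geq 0 \neq -1$. For $n \not\equiv 0 \pmod 4$ no such elementary obstruction is available, and I would construct base examples in the smallest unresolved dimensions $n = 7, 9, 10$ individually: for $n = 7$ a nontrivial $S^{3}$-bundle over $S^{4}$ or an Aloff--Wallach/Eschenburg space whose Kreck--Stolz or Eells--Kuiper invariants obstruct orientation reversal even up to homotopy equivalence; for $n = 9, 10$ examples where the torsion linking form on the middle-dimensional cohomology is not isomorphic to its negative. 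These base examples then propagate to all higher dimensions of the same residue class by taking cartesian products with a suitable $\CP^{2j}$, combined with a Künneth analysis ruling out degree $-1$ self-maps on the total space.

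\emph{Main obstacle.} The subtlest point is to ensure that the chosen invariants outside the $4k$-case are (i) fine enough to obstruct \emph{arbitrary} continuous degree $-1$ self-maps, not merely self-diffeomorphisms, and (ii) preserved under the product construction. In particular one must verify that $M \times \CP^{2j}$ remains strongly chiral whenever $M$ is, carefully controlling the mixing of Künneth components so that a hypothetical degree $-1$ map necessarily restricts to a degree $-1$ map on the $M$-factor; the naive alternative of forming $M \times S^{m}$ fails, since reflecting the sphere factor already supplies a degree $-1$ self-map.
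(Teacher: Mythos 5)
Your treatment of dimensions $3$--$6$ and of dimensions $\equiv 0 \bmod 4$ matches the paper's, and your base case in dimension $7$ is essentially right (the paper uses linear $S^3$-bundles over $S^4$ with Euler class $6$, where the linking form on $H_3\cong\Zz/6$ obstructs degree $-1$ because $-1$ is not a quadratic residue mod $6$; this works precisely because $7\equiv 3\bmod 4$). The genuine gap is in your base cases for dimensions $9$ and $10$. The torsion linking form of a $(2k-1)$-manifold is $(-1)^k$-symmetric and, by Wall, is always isomorphic to its negative when $k$ is odd; for $n=9$ one has $k=5$, so the linking form can never obstruct orientation reversal there. For $n=10$ there is no self-linking form on a single middle-dimensional torsion group at all: the linking pairing relates $TH_i$ with $TH_{n-i-1}$, and $i=n-i-1$ forces $n$ odd. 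These two dimensions are exactly where the paper has to work hardest. For $n=10$ it uses the mod-$3$ Steenrod power, producing a $2$-connected $M$ with $\langle i\cup P^1 i,\rho_3[M]\rangle\neq 0$ in $\Zz/3$, which forces $\deg T\equiv k^2\not\equiv -1\pmod 3$. For $n=9$ it builds a three-stage Postnikov candidate $P^4\to P^3\to P^2$ whose $k$-invariants pin the induced map on $H^2$ to a diagonal $\pm1$ matrix, and whose rational minimal model then shows the image of the fundamental class cannot be reversed. In both cases realizing the obstruction by an actual simply-connected manifold requires an Atiyah--Hirzebruch computation in framed bordism plus Kreck's surgery below (and, for $n=9$, rationally in) the middle dimension --- a step your proposal does not address.

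Your propagation step also needs repair. The paper does not in general take products with $\CP^{2j}$: its product lemmas require one factor to be a rational homology sphere, together with a vanishing condition on the rational cohomology of the other factor, so that the cross terms in the K\"unneth matrix are killed by Hopf's degree theorem and the degree of a self-map factors as $ad$ with neither factor equal to $-1$. The one place a $\CP^2$-factor is used (passing from $9$ to $13$) requires redoing the entire Postnikov/minimal-model analysis for the product, precisely because of the mixing you flag in your last paragraph. So the dimensions $\equiv 1,2\bmod 4$ and the exceptional dimensions $13$ and $17$ are not reachable by the route you sketch without substantial new input; the paper instead covers $n\equiv 2\bmod 4$ from $14$ on and $n\equiv 1\bmod 4$ from $21$ on by products of rational homology spheres (and connected sums thereof), reserving the Postnikov/Steenrod machinery for $9$, $10$, $13$, $17$.
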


In the amphicheiral case, it is also interesting to consider the minimal order of orientation-reversing maps. We give examples from the literature which admit an orientation-reversing diffeomorphism but none of finite order. We complement this with amphicheiral manifolds where the minimal order of an orientation-reversing map is finite but arbitrarily large:
\begin{thm}\label{thm:C}
For every positive integer $k$, there are infinitely many lens spaces which admit an orientation-reversing diffeomorphism of order $2^k$ but no self-map of degree $-1$ of smaller order.
\end{thm}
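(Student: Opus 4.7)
The plan is to build, for each $k\geq 1$, an explicit infinite family of lens spaces of dimension $2^k-1$ together with a concrete diffeomorphism of the claimed order, and then to invoke Olum's classification to rule out lower-order degree-$-1$ self-maps. I set $m=2^{k-1}$ and pick a prime $p$ with $p\equiv 1\pmod{2^k}$ (Dirichlet's theorem supplies infinitely many such); the cyclic group $(\Zz/p)^\times$ then contains an element $a$ of multiplicative order exactly $2^k$, and $a^{2^{k-1}}\equiv-1\pmod p$ since this is the unique element of order $2$ in the subgroup generated by $a$. The target manifold is $L\coloneq L(p;1,a,a^2,\ldots,a^{m-1})$; different primes give lens spaces with different fundamental groups, which supplies the infinitude.

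The orientation-reversing diffeomorphism $f$ arises explicitly on the universal cover as $\tilde f(z_1,\ldots,z_m)=(z_2,\ldots,z_m,\bar z_1)$ on $S^{2m-1}\subset\Cc^m$. The choice $r_j=a^{j-1}$ is precisely what makes $\tilde f$ equivariant with respect to the $\Zz/p$-action twisted by multiplication by $a$ on $\pi_1$: the relations $a\cdot a^{j-1}=a^j$ for $j<m$ are matched without conjugation, and the relation $a\cdot a^{m-1}=a^m\equiv-1\pmod p$ is matched by the single complex conjugation in the final coordinate. A determinant count---cyclic permutation of $m$ copies of $\Rr^2$ is orientation-preserving, and one coordinate-wise conjugation contributes a sign---gives $\deg f=-1$. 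Direct iteration shows $\tilde f^{2m}=\id$ while each $\tilde f^\ell$ for $0<\ell<2m$ involves a non-trivial permutation or conjugation of coordinates and so cannot equal a deck transformation; combined with the fact that $f^\ell=\id$ forces $a^\ell=1$ in $(\Zz/p)^\times$, this pins the order of $f$ at exactly $2^k$.

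To close the argument, I turn to Olum's degree congruence: any self-map of $L(n;r_1,\ldots,r_m)$ of degree $d$ inducing multiplication by $b$ on $\pi_1$ must satisfy $d\cdot r_1\cdots r_m\equiv b^m\cdot r_1\cdots r_m\pmod n$. In our setting all $r_i$ are coprime to $p$, so this collapses to $d\equiv b^m\pmod p$. Taking $d=-1$ and $m=2^{k-1}$ forces $b^{2^{k-1}}\equiv-1\pmod p$, whence $b$ has multiplicative order exactly $2^k$ in $(\Zz/p)^\times$. A finite-order self-map of $L$ of degree $-1$ therefore has order a multiple of $2^k$. The step requiring the most care is applying Olum's congruence in the correct form for weighted lens spaces; once that is in hand, the construction, the sign bookkeeping, and the appeal to Dirichlet are essentially routine.
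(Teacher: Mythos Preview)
Your proof is correct and follows essentially the same strategy as the paper: the explicit shift-and-conjugate diffeomorphism $\tilde f(z_1,\ldots,z_m)=(z_2,\ldots,z_m,\bar z_1)$ on a lens space with geometrically progressing weights, Olum's congruence $d\equiv b^m\pmod p$ to force the $\pi_1$-action of any degree~$-1$ map to have order $2^k$, and Dirichlet's theorem to supply infinitely many primes.

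The one genuine difference is in the choice of parameters. The paper takes primes $p$ with $2^k\Vert p-1$, works in dimension $p-2$, and uses a primitive root $c$ as the ratio of the weights; the resulting $\tilde f$ then has order $p-1$, and one passes to the odd power $f^l$ to obtain order exactly $2^k$. You instead fix the dimension at $2^k-1$, require only $p\equiv 1\pmod{2^k}$, and pick $a$ of order exactly $2^k$; your $f$ then already has order $2^k$ without passing to a power. Your route is a bit more economical (no auxiliary power, and the lower bound on the order follows in one line from $a^\ell\equiv 1$), and it produces infinitely many examples all in the \emph{same} dimension $2^k-1$, whereas the paper's examples live in varying dimensions $p-2$. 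Both arrive at the same conclusion by the same mechanism. One cosmetic point: the weighted form of Olum's congruence you wrote down is unnecessary, since the statement $e^m\equiv d\pmod p$ is already weight-independent; your cancellation step is correct but could simply be omitted.
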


\subsection*{Outline}

In \autoref{sec:oldexamples}, we briefly survey known results and invariants which detect chirality. We also quote results which imply that homotopical and strong chirality coincide for all relevant manifolds in this paper. In \autoref{sec:newexamples}, we prove parts of \autoref{thm:A} and \autoref{thm:B} by constructing strongly chiral manifolds in all dimensions $\geq 3$ and simply-connected, strongly chiral manifolds in all dimensions $\geq 7$ except 9, 10, 13 and 17. These remaining four dimensions are dealt with in \autoref{sec:simplyconn}, which finishes the proof of \autoref{thm:B}. The proof of \autoref{thm:A} is completed in \autoref{sec:bordism} by extending the examples to all oriented bordism classes in dimensions $\geq 3$. Finally, \autoref{sec:minimalorder} is devoted to the proof of \autoref{thm:C}.

If not indicated otherwise, homology and cohomology are in this text always understood with integral coefficients.

\subsection*{Terminology}

Three distinct adjectives have been used in the literature to describe manifolds whose orientation can be reversed by a self-map: “symmetric” (by Rueff \cite{Rueff}*{page 162} and Kirby \cite{Kirby}*{Problem 1.23}), “amphicheiral” (by Siebenmann \cite{Siebenmann} and Saveliev \cite{Saveliev02} for 3-manifolds) and “reversible” (by Hirsch \cite{Hirsch}*{9.1.3, page 190} for surfaces). We chose the pair chiral/amphicheiral since it is justified by two parallels between knot theory and 3-manifold topology. The other two notions are dismissed since they either have now other meanings (asymmetric manifolds of Puppe \cite{Puppe}, symmetric spaces) or in the case of “reversibility” the existing concept in knot theory would suggest the wrong analogies.

First, if a 3-manifold admits a cyclic branched covering over an amphicheiral link it is amphicheiral. More precisely, we prove the following statement in \cite{Muellner}*{Proposition 19}.
\begin{prop}
Let $M$ be a triangulated 3-manifold and let $L$ be a piecewise linear link in $S^3$. Suppose that there is a PL-map $p\co M\to S^3$ which is a cyclic branched covering with branching set $L$. If the link $L$ is amphicheiral, then $M$ is PL-amphicheiral.
\end{prop}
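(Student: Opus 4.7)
The plan is to lift an orientation-reversing PL-homeomorphism $\phi\co(S^3,L)\to(S^3,L)$, which exists because $L$ is amphicheiral, to an orientation-reversing PL-homeomorphism $\tilde\phi\co M\to M$; the latter witnesses the PL-amphicheirality of $M$.

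First I would reduce to the unbranched situation. Over $S^3\setminus L$, the map $p$ restricts to a regular $n$-fold cyclic covering $p_0\co M\setminus p^{-1}(L)\to S^3\setminus L$, classified by a surjective homomorphism $\chi\co\pi_1(S^3\setminus L)\to\Zz/n$ that factors through $H_1(S^3\setminus L)$. The latter group is freely abelian on the meridians $\mu_1,\ldots,\mu_k$ of the components of $L$. By standard covering space theory, $\phi$ lifts to a self-homeomorphism of $M\setminus p^{-1}(L)$ if and only if $\chi\circ\phi_*=\alpha\circ\chi$ for some $\alpha\in\Aut(\Zz/n)$.

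Next I would check this compatibility using the geometry of $\phi$. Since $\phi$ preserves $L$ setwise and reverses the orientation of $S^3$, the induced map $\phi_*$ permutes the meridians up to sign, i.e.\ $\phi_*(\mu_i)=\pm\mu_{\sigma(i)}$ for some permutation $\sigma$ of $\{1,\ldots,k\}$, the signs depending on whether $\phi$ preserves or reverses the orientation of each component. For a genuine cyclic branched covering, $\chi(\mu_i)$ is a unit in $\Zz/n$ for each $i$, and an amphicheiral $\phi$ yields a uniform automorphism $\alpha$ (in the basic case, $\alpha(t)=-t$) satisfying the required equation. The resulting lift $\tilde\phi_0$ on the unbranched part then extends uniquely across $p^{-1}(L)$ to a PL-homeomorphism $\tilde\phi\co M\to M$, since a branched covering is determined by its unbranched restriction via Fox completion and the local combinatorial structure near the branching set is preserved.

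Finally, because $p$ is an orientation-preserving branched covering of oriented manifolds and $p\circ\tilde\phi=\phi\circ p$ with $\phi$ orientation-reversing, the lift $\tilde\phi$ necessarily reverses the orientation of $M$. The principal obstacle lies in the compatibility $\chi\circ\phi_*=\alpha\circ\chi$: this is transparent in the standard setting where $\chi$ sends every meridian to the same unit in $\Zz/n$ and the signs from $\phi_*$ are uniform, but more delicate when $\phi$ permutes components nontrivially with mixed orientation behaviour or when the branching data vary among components. Making precise the sense of ``amphicheiral link'' appropriate for the given cyclic covering is what ensures this step goes through.
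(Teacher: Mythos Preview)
The paper does not prove this proposition; it merely states it and cites the author's thesis \cite{Muellner}*{Proposition 19} for the argument. Your approach---lift an orientation-reversing PL-homeomorphism of the pair $(S^3,L)$ through the branched covering---is the natural one and almost certainly the intended one.

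That said, the obstacle you yourself flag is a genuine gap, not a technicality. For a multi-component link, an orientation-reversing homeomorphism $\phi$ of $(S^3,L)$ sends meridians to meridians with signs $\epsilon_i\in\{\pm1\}$ that depend on whether $\phi$ preserves or reverses the orientation of each individual component, and these signs need not agree. If $\chi$ sends every meridian to the same generator of $\Zz/n$ (the usual meaning of \emph{the} $n$-fold cyclic branched cover), then the compatibility $\chi\circ\phi_*=\alpha\circ\chi$ forces all the $\epsilon_i$ to coincide; for a randomly chosen orientation-reversing symmetry of an unoriented link this can fail once $n>2$. So as written your argument proves the proposition only under an implicit hypothesis---either that the link is oriented and ``amphicheiral'' is taken in the oriented sense (positive or negative amphicheiral, giving uniform signs), or that one is free to replace $\phi$ by another orientation-reversing symmetry with uniform sign behaviour. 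You need to pin down which reading is intended and verify it, rather than leave the step as ``more delicate''. A slightly cleaner bookkeeping device is to note that the pullback of $p$ along $\phi$ is a cyclic branched cover $-M\to S^3$ over the same link, classified by $\chi\circ\phi_*^{-1}$, and then argue directly that this cover is PL-homeomorphic to $M$; but this reformulation does not by itself remove the need to compare $\chi$ and $\chi\circ\phi_*^{-1}$.
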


The second parallel is the following: When a 3-manifold is formed by rational surgery on a link in $S^3$, the manifold with the opposite orientation is obtained by surgery on the mirror image of this link, with the negative surgery coefficients; see Saveliev \cite{Saveliev02}*{\textsection\,2.2, \textsection\,3.4}. Thus, surgery on an amphicheiral link which is appropriately labeled by rational surgery coefficients yields an amphicheiral 3-manifold. Since every 3-dimensional manifold has a unique smooth structure, it is not necessary to distinguish between topological, PL- and smooth amphicheirality here.

\subsection*{Acknowledgements}
This paper is a condensed version of a large part of my doctoral thesis written under the supervision of Prof.\ Matthias Kreck. I would like to thank him for his constant support during my doctoral studies. I also want to thank Prof.\ Shmuel Weinberger for several very useful comments and suggestions.

\let\thethm\savethethm
\let\theprop\savetheprop

\section{Known examples and obstructions}\label{sec:oldexamples}

In this section, we briefly survey known results and invariants which detect chirality. The examples show that not all kinds of chirality coincide.

\begin{itemize}
\item
A manifold with nonzero \emph{signature} does not admit a self-map of negative degree, in particular it is strongly chiral. Indeed, the \textit{intersection form} of a closed, oriented, $4k$-dimensional manifold is isomorphic to its negative if and only if its signature is zero; see Milnor and Husemoller \cite{MH}*{Theorem 5.3}. In contrast, the antisymmetric intersection form of a $(4k+2)$-dimensional manifold is always isomorphic to its negative \cite{MH}*{Corollary 3.5}.

\item
The \textit{linking form} of a closed, oriented, $(2k-1)$-dimensional manifold gives obstructions to amphicheirality if $k$ is even. For example, a $(4n-1)$-dimensional manifold with $H_{2n-1}(M)\cong\Zz/t$ is strongly chiral if $-1$ is not a quadratic residue modulo $t$. If $k$ is odd, the linking form is always isomorphic to its negative; see Wall \cite{Wall62}*{Lemma 4(ii)}.

\item
Explicit \emph{classification results} up to \textit{oriented} maps (diffeomorphisms, homotopy equivalences etc.)\ give information about chirality. For example, lens spaces have been classified up to orientation-preserving homotopy equivalence and homeomorphism/PL-equivalence/diffeomorphism; see Milnor \cite{Milnor66}*{\textsection\,12} and Lück \cite{Lueck}*{\textsection\,2.4}. There are lens spaces like $L_5(1,1)$ which are topologically chiral but homotopically amphicheiral.

\item
A manifold with nonzero \textit{Pontrjagin numbers} is topologically chiral by work of Novikov \cite{Novikov}.

\item
A closed, smooth, oriented manifold whose Pontrjagin numbers are zero is oriented bordant to a manifold with an orientation-reversing smooth involution (see Kawakubo \cite{Kawakubo} and Rosenzweig \cite{Rosenzweig}*{page 5, lines 13--20}). This smoothly amphicheiral representative can be chosen to be connected.

\item
A closed, smooth, oriented manifold is oriented bordant to its negative if and only if all its Pontrjagin numbers vanish; see Wall \cite{Wall60}.

\item
Many \textit{homotopy spheres} are examples of topologically amphicheiral but smoothly chiral manifolds; see Kervaire and Milnor \cite{KM}. Since the inverse of an element in the group $\theta_n$ of exotic $n$-spheres is given by the manifold with the opposite orientation, and for example we have $\theta_7\cong \Zz/28$, there are 13 pairs of smoothly chiral, homotopy 7-spheres. Obviously, these manifolds are topologically amphicheiral. The standard sphere and the exotic sphere of order 2 in $\theta_7$ are smoothly amphicheiral.

\item
In low-dimensional topology, more specialized invariants are available. The Casson invariant is a $\Zz$-valued homeomorphism invariant for oriented integral homology 3-spheres, which reverses its sign with the orientation; see Saveliev \cite{Saveliev99}*{especially Chapter 12}. For example, the Poincaré homology sphere has Casson invariant $-1$ \cite{Saveliev99}*{\textsection\,17.5} and is therefore topologically chiral.

A concept which produces chiral 3-manifolds in abundance, is homology bordism \cite{Saveliev99}*{\textsection\,11.4}. Since the homology bordism group $\Theta^3_\Zz$ of oriented integral homology 3-spheres contains a free abelian group of infinite rank, this yields a countable infinite number of homology 3-spheres, all of which are topologically chiral.

\item
The specialized techniques in 4-dimensional topology can be used to show smooth chirality. Based on Donaldson invariants, Kotschick exhibits a smoothly chiral, simply-connected 4-manifold (more precisely a minimal compact complex surface of general type) with signature 0 \cite{Kotschick92}*{Theorem 3.7, Remark 3.9}, in contrast to the topological amphicheirality of such manifolds by Freedman's results \cite{FQ}.
\end{itemize}

A closed, connected, orientable manifold $M$ is called \emph{\Hopfian} if every map $f\co M\to M$ of degree one is a homotopy equivalence. Since a self-map $f$ is a homotopy equivalence if and only if $f^2$ is, also a self-map of degree $-1$ of a \Hopfian{} manifold is a homotopy equivalence. Every manifold with finite fundamental group is \Hopfian. This has an elementary proof using Whitehead's theorem \cite{Whitehead1949}*{Theorem 3} and the Umkehr homomorphism in homology. Also note that a map of degree $\pm1$ between closed, connected, orientable manifolds induces a surjection on the fundamental groups \cite{Hausmann}. Orientable hyperbolic manifolds are also \Hopfian{} according to Sela \cite{Sela}.

In summary, the notions of weak amphicheirality and homotopical amphicheirality coincide for \Hopfian{} manifolds. Likewise, homotopical chirality implies strong chirality for \Hopfian{} manifolds. We will use this several times, as all relevant manifolds in this paper are \Hopfian.

\section{Strongly chiral manifolds in every dimension \textgeq\unicodespace3}\label{sec:newexamples}

In this section, we prove the parts of \autoref{thm:A} and \autoref{thm:B} which make sense to be dealt with together. In \autoref{oddexsec}, a series of strongly chiral manifolds in every odd dimension $\geq3$ is constructed. In \autoref{prodchiral}, we use cartesian products of chiral manifolds to produce even-dimensional chiral manifolds in all dimensions $n\equiv 2$ mod 4, $n\geq 6$. In dimensions congruent 0 modulo 4, many examples are known, like complex the complex projective spaces $\CP^{2k}$, or any other manifold with nonzero signature. We also obtain simply-connected, strongly chiral manifolds in all dimensions $\geq 7$ except 9, 10, 13 and 17.

Shmuel Weinberger pointed out a different way of proving \autoref{thm:A} to the author, which yields strongly chiral manifolds in all possible dimensions as a quick corollary, although not in a constructive and rather elementary manner: Belolipetsky and Lubotzky proved in \cite{BL} that every finite group can be realized as the isometry group of a compact, hyperbolic manifold of arbitrary dimension $\geq2$. The manifolds of Belolipetsky and Lubotzky are in fact orientable, and they are \Hopfian{} according to Sela \cite{Sela}. Thus by Mostow rigidity, every self-map of degree $\pm1$ is homotopic to an isometry if the dimension is at least 3. Now choose any finite group of odd order as the isometry group. This way, one obtains many strongly chiral, hyperbolic manifolds in every dimension $\geq 3$. Since the manifolds are aspherical, \autoref{oddbordant} can be used to cover all bordism classes in dimensions $\geq 3$.

\subsection{Strongly chiral manifolds in every odd dimension \textgeq\unicodespace3}\label{oddexsec}

Examples of strongly chiral manifolds in odd dimensions will be provided by mapping tori of $n$-dimensional tori $T^n$. We can exclude orientation-reversing maps by studying the endomorphisms of the fundamental group.

Let $f\co T^n\to T^n$ be an orientation-preserving diffeomorphism. The mapping torus of $f$,
\[
 M_f\coloneq T^n\times [0,1] \ \bigm/\  (x,0)\sim (f(x),1).
\]
is a fibre bundle over $S^1$ with fibre $T^n$. According to the long exact sequence of homotopy groups, $M_f$ is aspherical, and its fundamental group is a semidirect product $\pi_1(T^n)\rtimes\pi_1(S^1)\cong \Zz^n\rtimes\Zz$. Conjugation by lifts of elements in $\pi_1(S^1)$ determines a homomorphism $\psi\co\pi_1(S^1)\to \Out(\pi_1(T^n))$, or isomorphically $\Zz\to\GL_n(\Zz)$. In the present case, $\psi$ is given by $\psi(\id_{S^1})=f_*\co \pi_1(T^n)\to \pi_1(T^n)$, which can be seen from the attaching maps of the 2-cells in an appropriate CW-decomposition of $M_f$.

In the following, we require that $f_*-\id$ is an isomorphism on $\pi_1(T^n)$. Under these circumstances, we show in \cite{Muellner}*{Corollary 26} that $\pi_1(S^1)$ is the abelianization of $\pi_1(M_f,*)$ and $\pi_1(T^n)$ its commutator subgroup. Therefore, every endomorphism $J$ of $\pi_1(M_f)$ induces endomorphisms $J\ab$ and $J^{(1)}$ on $\pi_1(S^1)$ and $\pi_1(T^n)$ respectively. These two induced endomorphisms must be compatible with the action $\psi$:
\begin{equation}\label{condition}
 \psi(J\ab(h))(J^{(1)}(n)) = J^{(1)}(\psi(h)(n))
 \quad\text{for all $n\in\pi_1(M_f)$, $h\in\pi_1(S^1)$}
\end{equation}

Consider a self-map $K\co M_f\to M_f$. Since $[M_f,S^1]\cong H^1(M_f)\cong\Zz$, we can always complete the following diagram so that it commutes up to homotopy:
\[
  \xymatrix{T^n \ar[r]\ar@{.>}[d]^{K''} & M_f\ar[r]\ar[d]^K & S^1\ar@{.>}[d]^{K'} \\
  T^n\ar[r] & M_f\ar[r] & S^1 }
\]
This means that $K$ is homotopic to a fibre-preserving map. We can thus employ the naturality of the Serre spectral sequence, which has the highest $E_2$-term $E_2^{1,n}=H^1(S^1;H^n(T^n))\cong H^1(S^1)\otimes H^n(T^n)$ (constant coefficients since $f$ acts trivially on $H^n(T^n)$). We deduce from the spectral sequence in \cite{Muellner}*{Lemma 28} that the degree of $K$ is given by the product $\deg K'\cdot \deg K''=\deg K'\cdot \det(K''_*\co H_1(T^n)\to H_1(T^n))$.

Having chosen a basis for $H_1(T^n)=\pi_1(T^n)\cong \Zz^n$, every matrix $F\in\SL(n,\Zz)$ can be realized as the map on $\pi_1(T^n)$ which is induced by an orientation-preserving diffeomorphism $f\co T^n\to T^n$. Hence, we can construct a chiral $(n+1)$-manifold under the following circumstances:

\begin{lemma}\label{matrixeq}
Suppose there is a matrix $F\in\SL(n,\Zz)$ such that
\begin{myalphenum}
\item
$\det(F-\id)=\pm1$,

\item
the equation $FG=GF$ has no solution $G\in\GL(n,\Zz)$, $\det G=-1$,

\item
the equation $F^{-1}G=GF$ has no solution $G\in\SL(n,\Zz)$.
\end{myalphenum}
Then a mapping torus $M_f$ with $f\co T^n\to T^n$ realizing $F$ on $\pi_1(T^n)\cong\Zz^n$ is strongly chiral.
\end{lemma}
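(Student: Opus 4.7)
The plan is to show that any hypothetical self-map $K\co M_f\to M_f$ of degree~$-1$ forces the existence of a matrix $G$ satisfying either $FG=GF$ with $\det G=-1$ or $F^{-1}G=GF$ with $G\in\SL(n,\Zz)$, either of which contradicts one of the hypotheses (b), (c). All the technical machinery required for this deduction has already been assembled in the preceding paragraphs; the proof is essentially an unpacking of those observations together with the numerical conditions (a)--(c).

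First I would check that condition (a), the integer identity $\det(F-\id)=\pm1$, is precisely the assertion that $f_*-\id$ is invertible on $\pi_1(T^n)\cong\Zz^n$. This activates the quoted structural result: $\pi_1(T^n)$ is the commutator subgroup of $\pi_1(M_f)$ and $\pi_1(S^1)$ is its abelianization. In particular, both subgroups are characteristic, so any self-map $K$ induces well-defined endomorphisms $J^{(1)}$ of $\pi_1(T^n)$ and $J\ab$ of $\pi_1(S^1)$ subject to the compatibility relation~(\ref{condition}). Using $H^1(M_f)\cong\Zz$, I would homotope $K$ into a fibre-preserving map sitting over some $K'\co S^1\to S^1$ with fibre map $K''\co T^n\to T^n$, and invoke the cited spectral sequence computation to obtain $\deg K=\deg K'\cdot\det(K''_*)$. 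Writing $d=\deg K'=J\ab(1)$ and $G=K''_*=J^{(1)}\co\Zz^n\to\Zz^n$, and evaluating (\ref{condition}) at the generator $1\in\pi_1(S^1)$, I would read off the matrix identity $F^dG=GF$.

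Finally, assume $\deg K=-1$, so $d\cdot\det G=-1$ with $d,\det G\in\Zz$; this leaves only two cases. If $(d,\det G)=(1,-1)$, the relation reads $FG=GF$ with $G\in\GL(n,\Zz)$ of determinant $-1$, contradicting~(b). If $(d,\det G)=(-1,+1)$, the relation reads $F^{-1}G=GF$ with $G\in\SL(n,\Zz)$, contradicting~(c). Hence no such $K$ exists, and $M_f$ is strongly chiral.

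I expect no genuine difficulty in the argument itself, since the substantive work (identifying the two characteristic subgroups, reducing to a fibre-preserving map, and establishing the product formula for the degree) has been done in the preceding discussion. The only point that needs a little care is making sure that the exponent $d$ in $\psi(J\ab(1))=F^d$ matches the sign combinations producing degree~$-1$ so that the resulting commutation relations land exactly on the forbidden equations of (b) and (c), and not on some weaker identity that escapes those hypotheses.
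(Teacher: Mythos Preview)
Your proposal is correct and follows exactly the approach the paper has in mind; the paper's own proof is in fact even terser, merely recording the dictionary $F\leftrightarrow f_*=\psi(\id)$, $F^{-1}\leftrightarrow\psi(-\id)$, $J\ab(\id)=K'_*(\id)=\pm\id$, $G\leftrightarrow J^{(1)}=K''_*$ and declaring the lemma a reformulation of the preceding paragraphs. Your write-up makes explicit the step the paper leaves implicit, namely that $\deg K=-1$ forces $d=\deg K'$ and $\det G$ to be units of opposite sign, which is what pins the relation $F^dG=GF$ down to precisely the two forbidden equations in (b) and (c).
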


\begin{proof}
This lemma is a reformulation of the previous considerations, in particular the condition given by equation \eqref{condition}. The correspondence between the notation here and in the previous paragraphs is
\[\tabskip0pt plus 1fill
  \halign to \displaywidth{\tabskip0pt\hfil$\displaystyle#$\hfil\hskip3em&\hfil$\displaystyle#$\hfil\tabskip0pt plus1fill&#\tabskip0pt\cr
  F\leftrightarrow f_*=\psi(\id)\co\pi_1(T^n)\to\pi_1(T^n), &  F^{-1}\leftrightarrow \psi(-\id),\cr
  \noalign{\penalty10000\vskip2\jot}
  J\ab(\id)=K'_*(\id)=\pm\id\in\pi_1(S^1),&  G\leftrightarrow J^{(1)}=K''_*\co \pi_1(T^n)\to \pi_1(T^n).&\qedhere\cr
 }
\]
\vskip-\prevdepth
\vskip-\belowdisplayskip
\hrule height0pt
\end{proof}

We show that every matrix $F\in M(n\times n;\Zz)$, for even $n$, with characteristic polynomial
\[
 \chi_F(X)=X^{n}-X+1
\]
fulfills the lemma. A matrix with the required characteristic polynomial is given, e.\,g., by the following scheme:
\[
 \vbox{\hbox{$\displaystyle
 \xy
 <.72pc,0pc>:
 0, *h!!<0pt,\mathaxis>!R{F\coloneq{}\,},
 (0,-5);(1,-5):
 (1,1) *!!<0pt,\mathaxis>{-1},
 (3,1) *!!<0pt,\mathaxis>{1},
 (7,1) *!!<0pt,\mathaxis>{0},
 (1,6) *!!<0pt,\mathaxis>{0},
 (6,6) *!!<0pt,\mathaxis>{I_{n-1}},
 (0,0), {\ar@{-} '(10,0) '(10,10) '(0,10) (0,0)},
 (0,2), {\ar@{-} (10,2)},
 (2,0), {\ar@{-} (2,10)},
 (4,0), {\ar@{-} (4,2)},
 \endxy$}
 \hrule height0pt}
\]

The value $\chi_F(0)=1$ guarantees $F\in\SL(n,\Zz)$, while $\chi_F(1)=1$ ensures condition (a). Next we show that there is no solution to equation (b). By basic linear algebra, the eigenvalues of $F$ are distinct pairs of conjugate complex numbers $(\lambda_i,\bar\lambda_i)$, $\lambda_i\in\Cc\setminus\Rr$. Thus, $F$ is diagonalizable over $\Cc$ and its determinant is given by
\[
 \det F=\prod_{i=1}^{n/2}\lambda_i\bar\lambda_i = \prod_{i=1}^{n/2}|\lambda_i|^2.
\]

Suppose now that $FG=GF$, i.\,e.\ $G$ lies in the centralizer of $F$. Since $F$ has distinct eigenvalues, linear algebra again tells us that $G$ is a polynomial expression $p(F)$, possibly with rational coefficients. Hence, $G$ is also diagonalizable, and its determinant is given by the products of polynomials
\[
 \det G=\prod_{i=1}^{n/2}p(\lambda_i)\overline{p(\lambda_i)}
 =\prod_{i=1}^{n/2}|p(\lambda_i)|^2
 \geq 0.
\]

This contradicts $\det G=-1$, hence equation (b) has no solution.

Now we show that there is no solution to equation (c). Suppose $GF=F^{-1}G$ for some $G\in\SL(n,\Zz)$. Then $F$ and $F^{-1}$ have the same eigenvalues. However, this possibility can be easily excluded from the characteristic polynomial for $n>2$; see \cite{Muellner}*{page 36}. In the case $n=2$, the two eigenvalues of $F$ are in fact inverses of each other. However, in this case $G$ is necessarily of the form
\[
 G=\begin{pmatrix}
 a &b\\a+b&-a
 \end{pmatrix}
 \quad\text{with $a,b\in\Zz$,}
\]
and its determinant $\det G=-(a^2+ab+b^2)=-\frac12(a^2+(a+b)^2+b^2)$ is always negative. Thus, there is no solution to (c) in any case.\qed

\subsection{Products of strongly chiral manifolds}\label{prodchiral}

The chiral manifolds that we constructed in odd dimensions can also be used to obtain examples in even dimensions. We show that under certain conditions, products of strongly chiral manifolds are again strongly chiral. Moreover, we use this method to prove the existence of simply-connected, strongly chiral manifolds in sufficiently high dimensions.

\begin{prop}\label{products}
Let $\Sigma$ be a rational homology sphere and $M$ a closed, connected, orientable manifold of the same dimension which is not a rational homology sphere. The product $\Sigma\times M$ is strongly chiral if and only if both factors are.
\end{prop}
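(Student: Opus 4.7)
The plan is to separate the claim into the easy implication and a harder converse. The easy direction is that if either factor is weakly amphicheiral, then so is $\Sigma\times M$: an orientation-reversing self-map $f\co\Sigma\to\Sigma$ gives $f\times\id_M$ of degree $(-1)\cdot 1=-1$, and symmetrically for $M$. Contrapositively, strong chirality of $\Sigma\times M$ forces strong chirality of both factors.

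For the nontrivial direction, I would assume a self-map $\varphi\co\Sigma\times M\to\Sigma\times M$ of degree $-1$ and analyse $\varphi^*$ on cohomology. Set $n=\dim\Sigma=\dim M$ and let $x\in H^n(\Sigma;\Zz)$, $y\in H^n(M;\Zz)$ be generators. By the Künneth formula and the vanishing of $H^i(\Sigma;\Qq)$ for $0<i<n$, the free part of $H^n(\Sigma\times M;\Zz)$ is generated by $x\times 1$ and $1\times y$, so one may write
\[
\varphi^*(x\times 1)=a(x\times 1)+b(1\times y),\qquad\varphi^*(1\times y)=c(x\times 1)+d(1\times y)
\]
modulo torsion, with integer coefficients. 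Applying $\varphi^*$ to $(x\times 1)^2=(1\times y)^2=0$ and to $(x\times 1)(1\times y)=x\times y$, and observing that any product of a torsion class with a free class lives in the torsion-free group $H^{2n}=\Zz$ and hence vanishes, yields $ab=cd=0$ and $ad+bc=-1$. These constraints leave only two options: the diagonal case $b=c=0$ with $ad=-1$, and the antidiagonal case $a=d=0$ with $bc=-1$.

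The key step, and the one I expect requires some care, is to rule out the antidiagonal case; this is where the hypothesis that $M$ is not a rational homology sphere enters. I would pick $0<k<n$ with $H^k(M;\Qq)\neq 0$ and, using Poincaré duality on $M$, choose classes $\alpha\in H^k(M;\Qq)$, $\beta\in H^{n-k}(M;\Qq)$ with $\alpha\cup\beta=\lambda y$ for some $\lambda\neq 0$. Since $H^k(\Sigma;\Qq)=H^{n-k}(\Sigma;\Qq)=0$, both $\varphi^*(1\times\alpha)$ and $\varphi^*(1\times\beta)$ lie in $1\times H^*(M;\Qq)$, and hence so does their cup product. Comparing this with $\varphi^*\bigl(1\times(\alpha\cup\beta)\bigr)=\lambda\bigl(c(x\times 1)+d(1\times y)\bigr)$ shows that the $x\times 1$ component vanishes, i.\,e.\ $c=0$, contradicting $bc=-1$.

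Thus $b=c=0$ and $ad=-1$, so $\{a,d\}=\{1,-1\}$. Fixing basepoints, the composites $f\coloneq p_\Sigma\circ\varphi\circ i_\Sigma\co\Sigma\to\Sigma$ and $g\coloneq p_M\circ\varphi\circ i_M\co M\to M$ satisfy $f^*(x)=ax$ and $g^*(y)=dy$ after the torsion terms are killed by the pull-back to $H^n$ of the factor, so $\deg f=a$ and $\deg g=d$. One of these degrees equals $-1$, hence one of the factors admits a self-map of degree $-1$. Equivalently, strong chirality of both $\Sigma$ and $M$ implies strong chirality of $\Sigma\times M$, completing the proof.
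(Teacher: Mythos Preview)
Your overall strategy matches the paper's: analyse the induced map on $H^n(\Sigma\times M;\Qq)$ via its $2\times 2$ matrix, show one off-diagonal entry vanishes, and conclude that the degree factors as $ad$. However, your derivation of $ab=cd=0$ and $ad+bc=-1$ from the vanishing of $(x\times 1)^2$ and $(1\times y)^2$ is flawed when $n$ is odd---and the paper's principal application (\autoref{6mod8}, with $n\equiv 3\pmod 4$) is precisely that case. The graded commutativity gives $(1\times y)(x\times 1)=(-1)^{n}(x\times y)$, so expanding $\bigl(a(x\times 1)+b(1\times y)\bigr)^2$ yields $ab\bigl(1+(-1)^n\bigr)(x\times y)$, which is identically zero for odd $n$ and tells you nothing about $ab$. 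Likewise the correct degree identity is $ad+(-1)^n bc=\deg\varphi$. Your diagonal/antidiagonal dichotomy is therefore unjustified in the odd case.

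Fortunately this does not sink the argument. Your ``key step''---showing $c=0$ via Poincar\'e duality on $M$ and the fact that $H^k(\Sigma\times M;\Qq)=1\times H^k(M;\Qq)$ for $0<k<n$---is correct and does not use the case split at all. Once $c=0$, the degree equals $ad$ regardless of parity, so $ad=-1$ and your endgame with $f=p_\Sigma\circ\varphi\circ i_\Sigma$ and $g=p_M\circ\varphi\circ i_M$ goes through. Simply drop the squares argument and the dichotomy; argue directly that $c=0$ and conclude. (Note you do not get $b=0$ this way, but you never use it.)

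For comparison, the paper reaches the same vanishing of the off-diagonal entry by a different device: it identifies that entry with the mapping degree of the composite $p_M\circ T\circ i_\Sigma\co\Sigma\to M$ and invokes Hopf's theorem (via the Umkehr homomorphism) that a map between closed oriented manifolds has degree zero whenever some Betti number of the source is strictly smaller than that of the target. Your cup-product computation is essentially a direct, self-contained proof of the relevant special case of Hopf's theorem, so the two arguments are close cousins; yours avoids the external citation at the cost of a short extra calculation.
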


\begin{example}\label{6mod8}
Let $\Sigma$ be a lens space of dimension $n\equiv3$ \textup{mod 4} with fundamental group of order $t\equiv3$ \textup{mod 4}, and let $M$ be a strongly chiral mapping torus of the same dimension, as constructed in \autoref{oddexsec}. This yields examples of strongly chiral manifolds in each dimension congruent 6 modulo 8.
\end{example}

\begin{proof}
The lens space $\Sigma$ is strongly chiral according to the second item in the list in \autoref{sec:oldexamples} since t contains a prime factor congruent 3 mod 4. Furthermore, we have $H_1(M)=\pi_1(M)\ab\cong\Zz$, so $M$ is not a rational homology sphere.
\end{proof}

\begin{proof}[Proof of \autoref{products}]
The “only if” part is obvious, hence we deal with the necessity of the condition. Let $n$ be the dimension of $\Sigma$ and $M$. In the following, cohomology is understood with rational coefficients. By the Künneth theorem, we have
\[
 H^n(\Sigma\times M)\cong H^n(\Sigma)\oplus H^n(M)\cong\Qq^2.
\]

Consider the cohomology classes $[\Sigma]^*\in H^n(\Sigma)$ and $[M]^*\in H^n(M)$ which are Kronecker dual to the fundamental classes $[\Sigma]$, $[M]$. Let $T\co\Sigma\times M\to\Sigma\times M$ be a continuous map. The effect on $H^n(\Sigma\times M)$ is given (with respect to the basis $[\Sigma]^*$, $[M]^*$) by an integral matrix
\[
 \begin{pmatrix}
   a & b \\ c & d
 \end{pmatrix}.
\]

Since $[\Sigma]^*\cup[M]^*$ is a generator of $H^{2n}(\Sigma\times M)\cong\Qq$, the mapping degree of $T$ is given by $ad+(-1)^nbc$. The coefficient $b$ can be recovered as the mapping degree of $p_M\circ T\circ i_\Sigma\co\Sigma\to M$, where the maps $i_\Sigma$ and $p_M$ are the usual inclusion and projection.

By a theorem of Hopf following from the Umkehr homomorphism \cite{Hopf}*{Satz IIIa}, every map $\Sigma\to M$ must have degree zero since at least one Betti number of $\Sigma$ is smaller than that of $M$. Thus, the degree of $T$ is equal to the product $ad$. Since neither of the factors can be $-1$ by assumption, $T$ cannot reverse the orientation.
\end{proof}

If $\Sigma$ and $M$ have different dimensions, chirality of the product is even easier to prove since only the Künneth theorem is needed, not the argument with the Umkehr map.

\begin{prop}[\cite{Muellner}*{Theorem 34}]\label{products2}
Let $\Sigma$ be a rational homology sphere of dimension $s$ and $M$ a closed, connected, orientable manifold of different dimension $m\neq s$. Also require that $H^s(M;\Qq)=0$. The product $\Sigma\times M$ is strongly chiral if and only if both factors are.
\end{prop}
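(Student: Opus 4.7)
The plan is to imitate the proof of \autoref{products}, exploiting the dimension mismatch $m\neq s$ and the hypothesis $H^s(M;\Qq)=0$ to avoid the Hopf/Umkehr step entirely. First I will dispatch the "only if" direction: if either factor admits a self-map of degree $-1$, crossing it with the identity on the other factor gives an orientation-reversing self-map of $\Sigma\times M$.

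For the converse, I will fix $\alpha\in H^s(\Sigma;\Qq)$ and $\beta\in H^m(M;\Qq)$ Kronecker dual to the fundamental classes, with $p_\Sigma,p_M,i_\Sigma,i_M$ the usual projections and inclusions. Since $\Sigma$ is a rational homology sphere, $H^*(\Sigma;\Qq)$ is concentrated in degrees $0$ and $s$. Combined with $H^s(M;\Qq)=0$, the K\"unneth formula yields $H^s(\Sigma\times M;\Qq)\cong\Qq\langle p_\Sigma^*\alpha\rangle$, whereas $H^m(\Sigma\times M;\Qq)$ always contains $p_M^*\beta$ and, if $m>s$, an additional summand of the form $p_\Sigma^*\alpha\cup p_M^*H^{m-s}(M;\Qq)$. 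The top cohomology $H^{s+m}(\Sigma\times M;\Qq)$ is one-dimensional, spanned by $p_\Sigma^*\alpha\cup p_M^*\beta$.

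Next, for any self-map $T$ of the product, the one-dimensionality of $H^s$ will force
\[
T^*(p_\Sigma^*\alpha)=a\,p_\Sigma^*\alpha,\quad T^*(p_M^*\beta)=d\,p_M^*\beta+p_\Sigma^*\alpha\cup p_M^*\gamma,
\]
for some rationals $a,d$ and some $\gamma\in H^{m-s}(M;\Qq)$ (absent if $m<s$). The key computational point is that $\alpha\cup\alpha\in H^{2s}(\Sigma;\Qq)=0$, so the off-diagonal term $\gamma$ is killed when cup-producted with $p_\Sigma^*\alpha$, giving $\deg T=ad$. I will then identify $a$ and $d$ as the degrees of the induced self-maps $p_\Sigma\circ T\circ i_\Sigma$ of $\Sigma$ and $p_M\circ T\circ i_M$ of $M$ (the cross-term pulls back to zero along $i_M$), so in particular $a,d\in\Zz$. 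From $ad=-1$ it follows that $-1\in\{a,d\}$, contradicting the strong chirality of one of the factors.

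The only subtlety I anticipate is ensuring that the off-diagonal contribution in $T^*(p_M^*\beta)$ does not spoil the degree formula, and this is handled cleanly by $\alpha^2=0$; no deeper obstacle appears, because the dimensional mismatch removes the need for the Hopf/Umkehr argument used in \autoref{products}.
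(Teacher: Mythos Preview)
Your proof is correct and follows exactly the approach the paper indicates: the paper does not write out a proof for \autoref{products2} but remarks that ``only the Künneth theorem is needed, not the argument with the Umkehr map,'' and your argument carries this out precisely, using the one-dimensionality of $H^s(\Sigma\times M;\Qq)$ together with $\alpha^2=0$ to obtain $\deg T=ad$ without any Hopf/Umkehr input.
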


\begin{example}\label{2mod4}
Let $L=L_1\times\ldots\times L_k$ be a product of lens spaces of pairwise different dimensions. Then $L$ is strongly chiral if and only if this holds for each single factor.
\end{example}

This follows from \autoref{products2} by induction. Note that the condition in the example can be easily tested by the oriented homotopy classification of lens spaces \cite{Milnor66}*{\textsection\,12.1}. Since strongly chiral lens spaces occur in all dimensions $n\equiv3$ mod 4, this yields in particular strongly chiral manifolds in each dimension congruent 2 mod 4 starting from $10=7+3$. This finishes the construction of strongly chiral manifolds in all dimensions $\geq 3$.

We now want to apply this approach to simply-connected, strongly chiral manifolds and cover as many dimensions as possible. As “starting dimension” we cannot use 3 as in the examples with nontrivial fundamental group. Instead, we use highly connected, strongly chiral manifolds in dimensions congruent 3 mod 4 from 7 on as the “building blocks”. Suitable manifolds are e.\,g.\ linear $S^{2k-1}$ bundles over $S^{2k}$ with Euler class $6[S^{2k}]^*$. These are $(2k-2)$-connected closed manifolds with first nontrivial homology group $H_{2k-1}\cong\Zz/6$. Since $-1$ is not a quadratic residue modulo 6, the linking form forbids orientation reversal. (Note that an odd Euler class can only be achieved in dimensions 7 and 15, due to the Hopf invariant one problem. The Euler class 6 is always possible: pull back the sphere bundle of the tangent bundle $TS^{2k}$ by a smooth map of degree 3.)

This provides us with simply-connected, strongly chiral manifolds in every dimension $n\equiv3$ mod 4 starting from $n=7$. For completing the examples in higher dimensions, we need simply-connected, strongly chiral 7-manifolds that are \emph{not} rational homology spheres. Explicitly, we define $N_1$ to be the connected sum of $S^3\times S^4$ with a 7-manifold $M^7$ as constructed above. Similarly, $N_2\coloneq(S^2\times S^5)\connsum M^7$. They are strongly chiral because of the linking form.

\begin{cor}
In every dimension $n\equiv 2$ \textup{mod 4} starting from 14, there is a simply-connected, strongly chiral manifold.
\end{cor}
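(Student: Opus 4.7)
The plan is to write each dimension $n \equiv 2 \pmod 4$ with $n \geq 14$ as $n = 4k+2$ for some $k \geq 3$, and produce the required manifold as a cartesian product of two pieces already assembled in the preceding paragraphs. For the first factor I will take a simply-connected, strongly chiral rational homology sphere $\Sigma$ of dimension $4(k-1)-1 = 4k-5$; such a $\Sigma$ is furnished by the highly connected $S^{2(k-1)-1}$-bundles over $S^{2(k-1)}$ with Euler class $6$, which exist because $k-1 \geq 2$. For the second factor I will take one of the two $7$-dimensional manifolds $N_1 = (S^3\times S^4)\connsum M^7$ or $N_2 = (S^2\times S^5)\connsum M^7$ constructed just before the corollary: these are simply-connected, strongly chiral, and, having nonzero Betti numbers in an intermediate degree, are \emph{not} rational homology spheres.

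For the base case $n=14$ (that is, $k=3$), both candidate factors live in dimension $7$, so I will apply \autoref{products}. Its hypotheses are satisfied because $\Sigma$ is a rational homology sphere, $N_1$ is not, and both are strongly chiral, so the conclusion yields that $\Sigma \times N_1$ is strongly chiral.

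For $n \geq 18$ (i.e.\ $k \geq 4$) the two factors sit in distinct dimensions ($4k-5 \geq 11$ and $7$), so I will instead appeal to \autoref{products2}. The required vanishing $H^{4k-5}(N_1;\Qq) = 0$ is automatic, since $4k-5 > 7 = \dim N_1$; both factors being strongly chiral, the proposition gives strong chirality of $\Sigma \times N_1$. In either case, the product is simply-connected because $\pi_1$ of a product is the product of fundamental groups.

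The argument is essentially bookkeeping: the corollary is exactly what \autoref{products} and \autoref{products2} were set up to harvest, once one has on hand the two ingredients above. There is no real obstacle — the only thing worth checking is that $N_1$ is not a rational homology sphere, which is immediate from its construction as a connected sum with $S^3\times S^4$ (analogously for $N_2$ with $S^2\times S^5$).
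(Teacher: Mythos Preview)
Your proof is correct and, for $n=14$, identical to the paper's. For $n\geq 18$ you pair a rational homology sphere $\Sigma^{4k-5}$ with $N_1$ and invoke \autoref{products2}, whereas the paper instead takes a product of two simply-connected, strongly chiral rational homology spheres of distinct dimensions congruent $3$ mod $4$; this is a cosmetic variation---both are immediate applications of \autoref{products2}, and neither choice buys anything the other doesn't.
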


\begin{proof}
In dimension 14, take the product of a simply-connected, 7-dimensional rational homology sphere $M^7$ from the previous paragraphs with either $N_1$ or $N_2$ and apply \autoref{products}. In higher dimensions, use products of two rational homology spheres of different dimensions congruent 3 mod 4 and apply \autoref{products2}.
\end{proof}

\begin{cor}\label{simpconn1mod4}
In every dimension $n\equiv 1$ \textup{mod 4} starting from 21, there is a simply-connected, strongly chiral manifold.
\end{cor}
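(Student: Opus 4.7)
The plan is to mirror the strategy of the previous corollary: apply \autoref{products2} with a two-factor product for $n\geq 25$, and use a three-factor product with a direct cup-product analysis for the base case $n=21$. For $n\geq 25$ with $n\equiv 1\pmod 4$, I write $n=(n-14)+14$; the dimension $n-14$ lies in $\{11,15,19,\ldots\}$, so the sphere-bundle construction yields a simply-connected, strongly chiral rational homology sphere $\Sigma$ of that dimension. Take $M^{14}=M^7\times N_2$ from the previous corollary; by Künneth its rational cohomology is concentrated in dimensions $\{0,2,5,7,9,12,14\}$, so $H^{n-14}(M^{14};\Qq)=0$, and \autoref{products2} yields that $\Sigma\times M^{14}$ is simply-connected and strongly chiral of dimension $n$.

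For the base case $n=21$ this approach fails because $H^7(M^{14};\Qq)\neq 0$. Instead I would consider the simply-connected product $X=\Sigma\times N_1\times N_2$ of three 7-manifolds. By Künneth, $H^7(X;\Qq)\cong\Qq^3$ is spanned by the three pulled-back top classes $\alpha_\Sigma,\alpha_{N_1},\alpha_{N_2}$, and $H^{21}(X;\Qq)$ is generated by their cup product. For a self-map $T$, let $M=(M_{ij})$ be the matrix of $T^*$ on $H^7$. Since the three classes have odd degree, $\alpha_i\cup\alpha_i=0$ and the $\alpha_i$ anticommute, so only the permutation terms survive in $T^*(\alpha_\Sigma\cup\alpha_{N_1}\cup\alpha_{N_2})$, giving $\deg T=\det M$. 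Each entry $M_{ij}$ is the degree of a map between two of the three factors; comparing the pairwise distinct Betti profiles $(1,0,0,0,0,0,0,1)$, $(1,0,0,1,1,0,0,1)$, and $(1,0,1,0,0,1,0,1)$ of $\Sigma$, $N_1$, $N_2$, Hopf's theorem (as used in the proof of \autoref{products}) kills the four entries corresponding to $\Sigma\to N_1$, $\Sigma\to N_2$, $N_2\to N_1$, $N_1\to N_2$, since in each of these cases the target has a strictly larger Betti number than the source in some intermediate dimension.

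The remaining upper-triangular matrix then satisfies $\det M=M_{11}M_{22}M_{33}$, so $\det M=-1$ would force some $M_{ii}$ to equal $-1$, contradicting the strong chirality of the corresponding factor. The main obstacle is this three-factor cup-product analysis in the base case $n=21$, in particular the Betti-number bookkeeping needed to eliminate the four off-diagonal entries via Hopf's theorem; for $n\geq 25$ the argument reduces to a routine application of \autoref{products2}.
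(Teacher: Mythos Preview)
Your proposal is correct and follows essentially the same approach as the paper. For $n\geq 25$ you use exactly the paper's product $M^7\times N_2$ crossed with a higher-dimensional rational homology sphere and apply \autoref{products2}; for $n=21$ you use the same three-factor product $M^7\times N_1\times N_2$ (your $\Sigma$ is the paper's $M^7$) and carry out explicitly the cup-product/Hopf argument that the paper only sketches by referring to \cite{Muellner}*{Corollary 40}. One cosmetic point: the matrix you obtain is not literally upper-triangular in every indexing convention, but since the entries $(N_1\!\to\! N_2)$ and $(N_2\!\to\! N_1)$ both vanish, the determinant is in any case $M_{11}M_{22}M_{33}$, so your conclusion stands.
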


\begin{proof}
From dimension 25 on, we can take the product of the 14-dimensional manifold $M^7\times N_2$ from the previous corollary with another strongly chiral rational homotopy sphere. Note that, according to the rational Künneth theorem, $M^7\times N_2$ has no rational homology in degree 11 (and of course not in higher degrees congruent 3 mod 4). Thus, \autoref{products2} applies.

For dimension 21, consider $M^{21}\coloneq M^7\times N_1\times N_2$. Here, we can argue in a similar way as in the proof of \autoref{products}; we only have to consider products of three manifolds instead of two (see \cite{Muellner}*{Corollary 40}).
\end{proof}

\section{Amphicheirality and chirality of simply-connected manifolds}\label{sec:simplyconn}

In this section, we prove \autoref{thm:B}. The statements in dimensions 3 to 5 follow from the classification results for simply-connected manifolds in these dimensions more or less immediately. A closed, simply-connected 3-manifold is diffeomorphic to the 3-sphere by the Poincaré conjecture, as proved by Perelman (see Morgan and Tian \cite{MT}), and is hence smoothly amphicheiral. In dimensions 4 to 6, the classification results were obtained respectively by Freedman \cite{FQ}, Barden \cite{Barden} and Zhubr \cite{Zhubr}. In \cite{Muellner}*{\textsection\,4.1}, we provide the details to conclude amphicheirality from the classifications. The classification statement in dimension 6 is considerably more complicated than in the other dimensions, and we must add some smaller observations to prove amphicheirality. Denote the second homology group of the 6-manifold $M$ in question by $G$. All the invariants of Zhubr can be expressed as numbers or in terms of $G$. One invariant is for example the image of the fundamental class $\mu=f_*[M]\in H_6(K(G,2))$ under the first nontrivial Postnikov map $f\co M\to K(G,2)$. A necessary condition which has to be verified is that $\mu$ can be sent to $-\mu$ by an automorphism of $G$. Indeed, we show that for a finitely generated abelian group $G$, every element of $H_6(K(G,2))$ is sent to its inverse by the automorphism $-\id$ on $G$. We also show that all other invariants are sent to the values for $M$ with the opposite orientation under the maps which are induced by $-\id$ on $G$.

The main result of this section is the second part of \autoref{thm:B}, where we claim the existence of simply-connected, strongly chiral manifolds in every dimension $\geq 7$. We constructed such manifolds in all dimensions congruent 1, 2 and 3 mod 4, except 9, 10, 13 and 17, in \autoref{prodchiral}. The complex projective spaces $\CP^{2k}$ clearly provide simply-connected examples in dimensions congruent 0 mod 4. We prove that strongly chiral, simply-connected manifolds exist in the remaining dimensions in \autoref{1017} (dimensions 10 and 17) and \autoref{913} (dimensions 9 and 13).

These proofs are less constructive since the problem is more complicated than for an arbitrary fundamental group. In fact, we divide the problem into two manageable parts: (1) identify an obstruction to amphicheirality in the partial homotopy type and (2) realize the obstruction by a simply-connected manifold.

The first step is done with the help of the Postnikov tower: By its functoriality up to homotopy (see Kahn \cite{Kahn}), every self map of a space $M\to M$ induces a map $P^k\to P^k$ on its Postnikov approximations $p\co M\to P^k$ ($k=1,2,\ldots$) such that the diagrams
\[
 \xymatrix{M \ar[r]^p\ar[d] & P^k \ar[d] \\ M \ar[r]^p & P^k}
\]
commute up to homotopy. The same holds for rational Postnikov approximations. Whenever a homology class $m\in H_n(P^k)$ cannot be sent to its negative by any self-homotopy equivalence of $P^k$, also the preimages $(p_*)^{-1}(m)$ in the homology of $M$ cannot be reversed.

To exploit this property, we construct an appropriate finite tower of principal $K(\pi, n)$-fibrations (or simply a single stage) as a candidate for the Postnikov tower and fix an element in the integral homology of one of the stages that is to be the image of the fundamental class of the manifold. Then we prove that (by the mechanism that lies in the particular construction) this homology class can never be mapped to its negative under any self-map of a single Postnikov stage or of the partial Postnikov tower.

In the arguments given below, we do not relate the induced maps between successive stages in the tower of fibrations by functoriality. (For $P^3\to P^2$, it is done, though, explicitly and elementary in the proof of \autoref{diaglemma}, and for $P^4\loc\to P^3\loc\to P^2\loc$ we will refer to the properties of minimal models instead of spaces.) The full naturality statement involving all Postnikov stages at once is Kahn \cite{Kahn}*{Theorem 2.2}.

In the second step, the obstruction is realized by proving that there is indeed an $n$-dimensional manifold $M$ with the correct partial homotopy type (i.\,e.\ a map $p\co M\to P^k$ which is a $(k+1)$-equivalence) and the correct image $p_*[M]=m$ of the fundamental class in the Postnikov approximation. This step involves bordism computations and surgery techniques.

\begin{rem}
One can also define chirality in the algebraic setting of Poincaré duality algebras (see Papadima \cite{Papadima} or Meyer and Smith \cite{MeyerSmith}), possibly with some extra structure. The manifolds we construct in the following sections indicate interesting types of Poincaré duality algebras to consider: e.\,g.\ Poincaré duality algebras over $\Zz/p$ ($p>2$) with an action of the mod-$p$ Steenrod algebra, rational differential algebras and their minimal models and also integral lattices in rational Poincaré duality algebras. A detailed treatment of such algebraic generalizations is outside the scope of this paper, particularly since realizing the algebraic obstructions by manifolds is an essential part of this work.
\end{rem}

\subsection{Dimensions 10 and 17}\label{1017}

\begin{thm}\label{10dim1conn}
There exists a simply-connected, closed, smooth, strongly chiral, 10"-dimensional manifold.
\end{thm}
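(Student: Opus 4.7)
The plan is to follow the Postnikov-tower strategy laid out at the beginning of Section~4. Concretely, I would (a) design a small target space $P$ built from one or two principal $K(\pi,n)$-fibrations so that the set of homotopy classes of self-maps of $P$ fails to act by $-1$ on some chosen class $m\in H_{10}(P)$, and then (b) realize the pair $(P,m)$ by a simply-connected smooth 10-manifold via an oriented bordism computation followed by surgery below the middle dimension.

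For the first step, the naive choice $P=K(\Zz,2)=\CP^\infty$ is inadequate: every integer $k$ acts on $H^2(\CP^\infty)\cong\Zz$ and hence as $k^5$ on $H^{10}$, and $k=-1$ is a solution (realized geometrically by complex conjugation on $\CP^5$). The point is therefore to enlarge $P$ by a second stage that breaks this $\Zz/2$-symmetry. I would take a principal $K(\pi,n)$-fibration $P\to K(\Zz,2)$, for suitable $\pi$ (integral or mod-$p$) and $n\leq 9$, with a carefully chosen $k$-invariant $\kappa\in H^{n+1}(K(\Zz,2);\pi)$ depending polynomially on the generator $x\in H^2(K(\Zz,2))$. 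A homotopy self-equivalence of $P$ is then governed by a pair $(a,b)$, where $x\mapsto ax$ on the base and $y\mapsto by$ on the fiber class $y\in H^n(P;\pi)$, and the naturality of $\kappa$ forces a Diophantine constraint of the form $b\cdot \kappa(x)=\kappa(ax)$. Choosing $m$ to be a Kronecker-dual combination of $x^5$ and an appropriate product involving $y$, the induced action on $m$ becomes a monomial $a^ib^j$ constrained by $\kappa$, and the aim is to arrange these exponents so that the system $a^ib^j=-1$ has no integer solution compatible with $\kappa$. As the section preamble indicates, the same obstruction may alternatively be read off from a Sullivan minimal model of the rationalized Postnikov stage $P_{(0)}$, with the relevant monomial relation encoded in the differential of the model.

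For the second step, I would show that the pair $(P,m)$ is realized by a smooth manifold. Since $P$ is simply-connected with cells in bounded degrees, the oriented bordism group $\Omega_{10}^{\SO}(P)$ is accessible via the Atiyah-Hirzebruch spectral sequence, and one can either find a bordism class hitting $m$ directly, or construct a concrete candidate (for instance an iterated sphere bundle, a projective bundle, or a branched cover) whose fundamental class maps to $m$. Surgery below the middle dimension then converts such a representative into a simply-connected smooth 10-manifold $M$ equipped with a map $p\co M\to P$ that is at least a $6$-equivalence, which is enough for $p_\ast[M]=m$ to obstruct the existence of any degree $-1$ self-map of $M$, since every such self-map would descend (up to homotopy) to a self-map of $P$ reversing $m$.

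The hardest part I expect to be the first step: finding a minimal target $P$ whose $k$-invariant makes the exponent equation $a^ib^j=-1$ genuinely unsolvable, rather than accidentally solvable as in the $\CP^5$ case. Once the obstruction is set up, the realization by a smooth manifold and the simplification by surgery below the middle dimension should be essentially routine, following the same template that the author evidently uses in the remaining odd dimensions 9, 13, and 17.
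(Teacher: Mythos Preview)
Your two-step framework (obstruct in a Postnikov approximation, then realize by bordism and surgery below the middle dimension) is exactly the paper's. Where you diverge is in the choice of target $P$ and the nature of the obstruction.

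The paper does not build a tower over $K(\Zz,2)$. It takes the \emph{single} space $P=K(\Zz/3,3)$ and uses the Steenrod power $P^1$: there is a class $m\in H_{10}(K(\Zz/3,3))$ with $\langle\iota\cup P^1\iota,\rho_3 m\rangle\neq 0$ in $\Zz/3$. A self-map sends $\iota\mapsto k\iota$ for some $k\in\Zz/3$, hence multiplies this pairing by $k^2$; since $-1$ is not a square mod $3$, no self-map acts by $-1$ on $m$. This is a mod-$3$ cohomology-operation argument, not a rational or Diophantine one, and it needs only one Postnikov stage.

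Your route over $K(\Zz,2)$ is the harder path in dimension $10$, and it is not clear it succeeds. Over $\CP^\infty$ a nontrivial integral $k$-invariant forces the new generator $y$ into odd degree; then there is no mixed monomial $x^iy$ of degree $10$, and rationally $y^2=0$, so the only degree-$10$ class visible in the minimal model is $x^5$, on which $x\mapsto-x$ already acts by $-1$. You can try to rescue this with mod-$p$ fibers or extra stages, but at that point you are essentially rebuilding the Steenrod obstruction by hand. The paper's choice of $K(\Zz/3,3)$ bypasses all of this; the resulting manifold is in fact $2$-connected with $H_3\cong\Zz/3$, so there is no $K(\Zz,2)$ stage at all.

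For step (b) the paper uses framed bordism $\widetilde\Omega^{\mathrm{fr}}_{10}(K(\Zz/3,3))$ rather than $\Omega^{\SO}_{10}$: in the Atiyah--Hirzebruch spectral sequence the entire $9$-line $E^2_{r,9-r}$ vanishes, so the edge homomorphism is onto and $m$ is hit. Surgery below the middle dimension then promotes the map to a $5$-equivalence, exactly as you anticipated.
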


\begin{proof}
We follow the outline described above and choose the Eilenberg-MacLane space $K(\Zz/3,3)$ as a candidate for the first nontrivial, single Postnikov stage. From the structure of the subalgebra $H^*(K(\Zz/3,3);\Zz/3)$ of the Steenrod algebra $\mathcal A_3$ (see Hatcher \cite{HatcherAT}*{\textsection\,4L}) and the universal coefficient theorems, we can deduce \autoref{cohop} below. As usual, $P^1\co H^3(\mathord{\,-\,};\Zz/3)\to H^7(\mathord{\,-\,};\Zz/3)$ denotes the first Steenrod power operation, and $\rho_3$ is the reduction of integral coefficients modulo 3.

\begin{lemma}[\cite{Muellner}*{Lemma 48}]\label{cohop}
Let $\iota\in H^3(K(\Zz/3,3);\Zz/3)$ denote the canonical generator. There is a homology class $m\in H_{10}(K(\Zz/3,3))$ such that $\langle\iota\cup P^1\iota,\rho_3m\rangle$ is nonzero in $\Zz/3$.
\end{lemma}

We claim that there is a 10-dimensional, 2-connected manifold $M$ with first Postnikov approximation $f\co M\to K(\Zz/3,3)$ such that the image $f_*[M]$ of the fundamental class is $m$. Let $i\coloneq f^*\iota\in H^3(M;\Zz/3)$. If $T\co M\to M$ is a homotopy equivalence, $i$ is multiplied by some factor $k\in\Zz/3$: $T^*i=k\cdot i$. Then we have
\[
 \vbox{\openup\jot\halign{\hfill$\displaystyle#{}$&$\displaystyle{}#$\hfill\cr
 \deg T\cdot \langle i\cup P^1i,\rho_3[M]\rangle&=\langle i\cup P^1i,T_*\rho_3[M]\rangle=\langle (T^*i)\cup P^1(T^*i),\rho_3[M]\rangle\cr
 &=k^2\langle i\cup P^1i,\rho_3[M]\rangle.\crcr
 }}
\]
By \autoref{cohop}, the Kronecker product $\langle i\cup P^1i,\rho_3[M]\rangle$ is nonzero. Thus, $\deg T\equiv k^2$ mod 3, and this is never congruent $-1$, so $T$ cannot reverse the orientation.

Now we prove the existence of a 10-manifold with first Postnikov approximation $K(\Zz/3,3)$ and the correct image of the fundamental class. For the sake of simplicity, we look for a \emph{framed} manifold $M$. This task can be formulated as a bordism problem: Show that there is an element $(M,f)$ in the 10-dimensional singular framed bordism group $\Omega^{\mathrm{fr}}_{10}(K(\Zz/3,3))$ that maps to $m$ under the Thom homomorphism
\[
 \vbox{\openup\jot\halign{\hfill$\displaystyle#{}$&$\displaystyle{}#$\hfill\cr
  \Omega^{\mathrm{fr}}_{10}(K(\Zz/3,3)) &\to  H_{10}(K(\Zz/3,3)) \cr
  (M,f) & \mapsto f_*[M].\crcr
 }}
\]

The Thom homomorphism factors through the edge homomorphism $E^\infty_{10,0} \hookrightarrow E^2_{10,0}$ in the Atiyah-Hirzebruch spectral sequence for the homology theory $\Omega^{\mathrm{fr}}_*$; see Conner \cite{Conner}*{\textsection\,1.7} and Kochman \cite{Kochman}*{\textsection\,4.2}:
\[
  \Omega^{\mathrm{fr}}_{10}(K(\Zz/3,3)) \twoheadrightarrow  E^\infty_{10,0} \hookrightarrow E^2_{10,0}\cong H_{10}(K(\Zz/3,3))
\]
Thus, it is sufficient to prove surjectivity of the edge homomorphism.

In the following, we restrict ourselves to the reduced bordism group $\widetilde\Omega^{\mathrm{fr}}_{10}(K(\Zz/3,3))\subset \Omega^{\mathrm{fr}}_{10}(K(\Zz/3,3))$. This is no limitation: we actually produce manifolds which are nullbordant in $\Omega^{\mathrm{fr}}_{10}$.

The $E^2$-terms in the reduced Atiyah-Hirzebruch spectral sequence are given by $E^2_{r,s}\cong\widetilde H_r(K(\Zz/3,3);\Omega^{\mathrm{fr}}_s)$. Since there are no bordism groups of negative degree, the Atiyah-Hirzebruch spectral sequence is located in the first quadrant, and we have $E^\infty_{r,0}=E^{r+1}_{r,0}$. It is sufficient to show that all the intermediate inclusions $E^{i+1}_{10,0}=\ker d_i\subseteq E^i_{10,0}$ are in fact bijections, i.\,e.\ we want to show that all differentials starting from $E^2_{10,0}$ are zero. \hyperref[fig:AHSS:top]{\autoref*{fig:AHSS}} shows the relevant part of the Atiyah-Hirzebruch spectral sequence for $\widetilde\Omega^{\mathrm{fr}}_{10}(K(\Zz/3,3))$. It reveals that all terms $E^2_{r,9-r}$ on the 9-line are zero. Thus, the Thom homomorphism is surjective.
\begin{figure}\phantomsection\label{fig:AHSS:top}
\centerline{$
 \xy
 <2.3pc,0pt>:<0pt,1.75pc>::
 (-.5,-.5);p \ar (10.5,-.5)="id"
   \POS "id"+/r2\jot/ *!!<0pt,\mathaxis>!L{r},
 p \ar (-.5,10.5)="id"
   \POS "id"+/r2\jot/ *!!<0pt,\mathaxis>!L{s},
 0 *!!<0pt,\mathaxis>{0},
 +(1,0) *!!<0pt,\mathaxis>{0},
 +(1,0) *!!<0pt,\mathaxis>{0},
 +(1,0) *!!<0pt,\mathaxis>{\Zz/3},
 +(1,0) *!!<0pt,\mathaxis>{0},
 +(1,0) *!!<0pt,\mathaxis>{0},
 +(1,0) *!!<0pt,\mathaxis>{0},
 +(1,0) *!!<0pt,\mathaxis>{(\Zz/3)^2},
 +(1,0) *!!<0pt,\mathaxis>{\Zz/3},
 +(1,0) *!!<0pt,\mathaxis>+{0}="90",
 +(1,0) *!!<0pt,\mathaxis>+{\Zz/3}="100",
 "90";
 (8,1) *!!<0pt,\mathaxis>+{0} **\dir{-};
 (7,2) *!!<0pt,\mathaxis>+{0} **\dir{-};
 (6,3) *!!<0pt,\mathaxis>+{0} **\dir{}="63" ?<;?>>> **\dir{-},
 "63";
 (5,4) *!!<0pt,\mathaxis>+{0} **\dir{-};
 (4,5) *!!<0pt,\mathaxis>+{0} **\dir{-};
 (3,6) *!!<0pt,\mathaxis>+{0} **\dir{-};
 (2,7) *!!<0pt,\mathaxis>+{0} **\dir{-};
 (1,8) *!!<0pt,\mathaxis>+{0} **\dir{-};
 (0,9) *!!<0pt,\mathaxis>+{0} **\dir{-};
 (10,10) *!UR{E^2_{r,s}\cong\widetilde{H}_r(K(\Zz/3,3);\Omega^{\mathrm{fr}}_s)},
 "100"!C!R(.5) \ar_-{d_i} "63"!C
  \POS
  (-1.5,0) *!!<0pt,\mathaxis>{\Zz},
 +(0,1) *!!<0pt,\mathaxis>{\Zz/2},
 +(0,1) *!!<0pt,\mathaxis>{\Zz/2},
 +(0,1) *!!<0pt,\mathaxis>{\Zz/24},
 +(0,1) *!!<0pt,\mathaxis>{0},
 +(0,1) *!!<0pt,\mathaxis>{0},
 +(0,1) *!!<0pt,\mathaxis>{\Zz/2},
 +(0,1) *!!<0pt,\mathaxis>{*},
 +(0,1) *!!<0pt,\mathaxis>{*},
 +(0,1) *!!<0pt,\mathaxis>{*},
 +(0,1) *!!<0pt,\mathaxis>!C{\Omega^{\mathrm{fr}}_*},
 (-2,9.5);p+(1,0) **\dir{-},
 (-2,-.5) \ar@{-} '(-2,10.5) '(-1,10.5) '(-1,-.5) (-2,-.5)
 \endxy
$}
\caption{The Atiyah-Hirzebruch spectral sequence for $\widetilde\Omega^{\mathrm{fr}}_{10}(K(\Zz/3,3))$.}\label{fig:AHSS}
\end{figure}

Now we have a framed manifold $M$ together with a map $f\co M\to K(\Zz/3,3)$ such that $f_*[M]=m$. We still need the correct third homology group. By surgery below the middle dimension (see Kreck \cite{Kreck}*{Proposition 4}), $(M,f)$ is bordant to another manifold $(M',f')$ such that the new map $f'\co M'\to K(\Zz/3,3)$ is a 5-equivalence. Hence, $M'$ is 2-connected and $H_3(M')$ is isomorphic to $\Zz/3$. Since the bordism relation is understood over $K(\Zz/3,3)$, i.\,e.\ it takes the maps $f$ respectively $f'$ into account, the images of the fundamental classes $f_*[M]$ and $f'^*[M']$ are equal.
\end{proof}

\begin{cor}
There exists a simply-connected, closed, smooth, strongly chiral, 17"-dimensional manifold.
\end{cor}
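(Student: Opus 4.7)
The plan is to realise the 17-dimensional example as a cartesian product of two simply-connected, strongly chiral building blocks from earlier sections, and to conclude chirality of the product via \autoref{products2}. Concretely, I would take $\Sigma\coloneq M^7$, the 7-dimensional linear $S^3$-bundle over $S^4$ with Euler class $6[S^4]^*$ from \autoref{prodchiral}, and pair it with the 10-manifold $M^{10}$ produced in \autoref{10dim1conn}. The product $M^7\times M^{10}$ is then automatically a closed, smooth, simply-connected 17-manifold, so the task reduces to verifying the hypotheses of \autoref{products2}.

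First, $M^7$ is a rational homology 7-sphere: it is 2-connected with first nontrivial homology $H_3(M^7)\cong\Zz/6$, hence by Poincaré duality all intermediate rational homology groups vanish. The dimensions of the two factors differ ($7\neq 10$), so only the condition $H^7(M^{10};\Qq)=0$ remains to be checked. By the construction in \autoref{10dim1conn}, $M^{10}$ is 2-connected with $H_3(M^{10})\cong\Zz/3$, a finite group; Poincaré duality and the universal coefficient theorem therefore yield $H^7(M^{10};\Qq)\cong H_3(M^{10};\Qq)=0$. Both factors are strongly chiral — $M^7$ by the linking form obstruction recalled in \autoref{prodchiral} and $M^{10}$ by \autoref{10dim1conn} — so \autoref{products2} gives strong chirality of $M^7\times M^{10}$, as required.

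There is no serious obstacle here: the hard work — detecting chirality via a Steenrod power and realising the Postnikov data by a framed manifold — was carried out in dimension 10, and the present step is essentially a packaging exercise with the product technology of \autoref{prodchiral}. More generally, the same recipe would multiply $M^{10}$ by any strongly chiral rational homology sphere of some dimension $s\neq 10$ satisfying $H^s(M^{10};\Qq)=0$ in order to produce simply-connected strongly chiral manifolds in further dimensions $10+s$.
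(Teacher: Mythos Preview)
Your proposal is correct and follows essentially the same route as the paper: both take the product of the 10-dimensional manifold from \autoref{10dim1conn} with a 7-dimensional simply-connected strongly chiral rational homology sphere and invoke \autoref{products2}. The paper phrases the vanishing hypothesis slightly differently---noting that $M^{10}$ has nonzero Betti numbers only in degrees $0$, $10$ and possibly $5$---but this is the same observation you make via $H_3(M^{10};\Qq)=0$ and Poincar\'e duality.
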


\begin{proof}
The 10-dimensional manifold whose existence has just been shown has nonzero Betti numbers only in degrees 0, 10 and possibly 5. By \autoref{products2}, the product of this manifold with a 7-dimensional, strongly chiral rational homology sphere is strongly chiral.\qedhere

\vskip-\prevdepth
\hrule height0pt
\end{proof}

\subsection{Dimensions 9 and 13}\label{913}

In this section, examples of simply-connected, strongly chiral manifolds are given in the last two missing dimensions. Before we start proving their existence, some preliminaries are necessary.

As described above, we use the Postnikov tower of a simply-connected space. Two successive stages $P^n\to P^{n-1}$ of this tower form a principal fibration with a $K(\pi,n)$ as the fibre (see Griffiths and Morgan \cite{GM}*{\textsection\,VI.B}). This fibration is determined up to fibre homotopy equivalence by the $k$-invariant $k^{n+1}\in H^{n+1}(P^{n-1};\pi)$.

Given a principal $K(\pi,n)$-fibration $p\co E\to B$, it is known that the $k$-invariant $k^{n+1}$ is the image of the canonical element $\Delta\in H^n(K(\pi,n);\pi)=\Hom(\pi,\pi)$ under the transgression homomorphism which is the first possibly nonzero differential in the cohomology Serre spectral sequence with $\pi$-coefficients:
\[
  \xymatrix@R=1pt@C=6pt{
    \llap{$\displaystyle d_{n+1}\co{}$}E^{0,n}_{n+1} \ar[r] & E^{n+1,0}_{n+1} \\
    *[left]{{\cong}\strut} & *[left]{{\cong}\strut}\\
    H^n(K(\pi,n);\pi) & H^{n+1}(B;\pi)
    }
\]

We would like, however, to relate the $k$-invariant to the transgression in the Serre spectral sequence with \emph{integer} coefficients. We distinguish the transgressions for the various coefficient groups in the following by a subscript to $\tau$.

\begin{prop}\label{trans}
Suppose that $\pi$ is a finitely generated, free abelian group. Let $E \to B$ be a principal fibration with the fibre $F\simeq K(\pi,n)$. Assume that $B$ is homotopy equivalent to a CW-complex and $H_i(B)$ is finitely generated for $i\leq n+2$. Then the map
\[
 \xymatrix@=0pt{
 H^{n+1}(B;\pi) \ar@{}[rr]|-*{\to} && \Hom(H^n(F),H^{n+1}(B))\\
 \textup{$k$-invariant} & \mapsto & \textup{transgression in the Serre spectral sequence}\\
 k^{n+1}=\tau_\pi(\Delta) & \mapsto & (\tau_\Zz=d_{n+1}\co E^{0,n}_{n+1}\to E^{n+1,0}_{n+1})
 }
\]
coincides with the chain of natural isomorphisms
\[\openup\jot\tabskip0pt plus 1000pt
 \halign to \displaywidth{\hfill$\displaystyle#{}$\tabskip0pt&$\displaystyle{}#$\hfill\tabskip0pt plus 1000pt&\hfill{\normalfont#}\tabskip0pt\cr
 \noalign{\vskip-\jot}
 H^{n+1}(B;\pi) & \to H^{n+1}(B;H_n(F))&(Hurewicz map)\cr
 & \leftarrow H^{n+1}(B)\otimes H_n(F) &(Universal coefficient theorem)\cr
 & \to H_n(F;H^{n+1}(B))               &(Universal coefficient theorem)\cr
 & \to\Hom(H^n(F),H^{n+1}(B)).         &(Universal coefficient theorem)\crcr
 }
\]
\end{prop}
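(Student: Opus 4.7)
The plan is to reduce the statement with $\pi$-coefficients to the corresponding statement with $\Zz$-coefficients by invoking naturality of the transgression under coefficient homomorphisms: for any abelian group homomorphism $\phi\co A\to A'$, the induced map on Serre spectral sequences intertwines the transgressions $\tau_A$ and $\tau_{A'}$. I would apply this to each $\phi\in H^n(F)=\Hom(H_n(F),\Zz)$, viewed as a coefficient change $\pi\to\Zz$ via the Hurewicz identification $H_n(F)\cong\pi$. Since $H_{n-1}(F)=0$, the UCT gives $H^n(F;A)\cong\Hom(H_n(F),A)$ for every abelian group $A$, and under this identification the canonical class $\Delta\in H^n(F;\pi)$ corresponds to $\id_\pi$.

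With these identifications in place, $\phi_*\Delta\in H^n(F;\Zz)=H^n(F)$ is simply $\phi\circ\id=\phi$. Naturality of the transgression then yields
\[
 \phi_*(k^{n+1})\;=\;\phi_*(\tau_\pi\Delta)\;=\;\tau_\Zz(\phi_*\Delta)\;=\;\tau_\Zz(\phi)\;=\;d_{n+1}(\phi)\;\in\; H^{n+1}(B)
\]
for every $\phi\in H^n(F)$. Equivalently, the homomorphism $H^n(F)\to H^{n+1}(B)$ given by $\phi\mapsto\phi_*(k^{n+1})$ is precisely the integral transgression $d_{n+1}$.

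It remains to check that the chain of natural isomorphisms in the proposition carries $k^{n+1}$ to this homomorphism $\phi\mapsto\phi_*(k^{n+1})$. Since $\pi$ is finitely generated free, I would fix a basis $e_1,\dots,e_r$ of $\pi$ and expand $k^{n+1}=\sum_i c_i\otimes e_i$ under the cross-product isomorphism $H^{n+1}(B)\otimes\pi\xrightarrow{\cong}H^{n+1}(B;\pi)$ (the free-coefficient UCT step). Through the remaining maps the element $\sum_i c_i\otimes e_i\in H^{n+1}(B)\otimes H_n(F)$ is first swapped to $\sum_i e_i\otimes c_i\in H_n(F;H^{n+1}(B))$, and the final Kronecker map turns it into the homomorphism $\phi\mapsto\sum_i\phi(e_i)c_i\in H^{n+1}(B)$. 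But this is precisely $\phi_*(k^{n+1})$ expanded in the chosen basis, which by the previous step equals $d_{n+1}(\phi)$.

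The principal technical point is the naturality of the transgression under coefficient homomorphisms. This is a standard consequence of the construction of the Serre spectral sequence from the skeletal filtration of the total space, where a coefficient homomorphism acts on the chain level; but it must be invoked carefully (or by a pointed reference, e.\,g.\ to McCleary). Everything else is routine bookkeeping: identifying $\Delta$ via Hurewicz and the UCT, and matching the last Kronecker isomorphism with coefficient evaluation on a basis.
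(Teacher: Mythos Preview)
Your argument is correct and is essentially the same as the paper's: both proofs rest on the naturality of the Serre transgression under coefficient homomorphisms together with the UCT identifications, and both amount to tracking $\Delta\leftrightarrow\id$ through these identifications to see that $k^{n+1}$ corresponds to $\tau_\Zz$. The only cosmetic difference is that the paper packages the bookkeeping into one large commutative diagram (with $\tau_{H_n(F)}$, $\tau_\Zz\otimes\id$, $(\tau_\Zz)_*$, and $\Hom(\id,\tau_\Zz)$ as horizontal maps), whereas you argue ``pointwise'' by applying each $\phi\in H^n(F)=\Hom(\pi,\Zz)$ as a coefficient change and then assembling the result via a basis of $\pi$.
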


Note that all relevant $\Ext$ and $\Tor$ groups in the universal coefficient theorems vanish because $H_{n-1}(F)=H^{n+1}(F)=0$ and $H_n(F)$ is finitely generated free.

\begin{proof}[Outline of proof]
A detailed proof is given in \cite{Muellner}*{Proposition 56}. The idea is to prove commutativity of the following diagram:
\[
 \xymatrix@C=4.5pc@R=2.2pc{
 \Hom(H^n(F),H^n(F)) \ar[r]^-{\Hom(\id,\tau_\Zz)} & \Hom(H^n(F),H^{n+1}(B)) \\
 H_n(F;H^n(F)) \ar[u]\ar[r]^{(\tau_\Zz)_*}_-{\text{coefficient change}} & H_n(F;H^{n+1}(B))\ar[u]\\
 H^n(F)\otimes H_n(F)\ar[u]\ar[d]\ar[r]^{\tau_\Zz\otimes\id} & H^{n+1}(B)\otimes H_n(F)\ar[u]\ar[d]\\
 H^n(F;H_n(F)) \ar[d]\ar[r]^{\tau_{H_n(F)}} & H^{n+1}(B;H_n(F))\ar@{.>}[d]\\
 \Hom(H_n(F),H_n(F)) \ar[r] & \Hom(H_{n+1}(B),H_n(F))
 }
\]

All vertical maps are parts of universal coefficient sequences, and are all isomorphisms except for the dotted arrow at the bottom right. Remembering the maps in the universal coefficient theorem and according to the known relation between the $k$-invariant and the transgression with $H_n(F)$-coefficients, elements are mapped in the following way:\pagebreak[0]
\[
 \xymatrix@C=1pc@R=0pt{
 \id & \mapsto & \tau_\Zz\\
 *[left]{{\mapsto}\strut} && *[left]{{\mapsto}\strut}\\
 \bullet && \bullet\\
 *[left]{{\mapsto}\strut} && *[left]{{\mapsto}\strut}\\
 \bullet && \bullet\\
 *[right]{{\mapsto}\strut} && *[right]{{\mapsto}\strut}\\
 \Delta & \mapsto & k^{n+1}\\
 *[right]{{\mapsto}\strut} \\
 \id
 }
\]

Thus, the maps are exactly as stated in \autoref{trans}.\qedhere

\vskip-\prevdepth
\hrule height0pt
\end{proof}

\subsubsection{The 9-dimensional example}

\begin{thm}\label{1conn9ex}
There exists a simply-connected, closed, smooth, strongly chiral, 9"-dimensional manifold.
\end{thm}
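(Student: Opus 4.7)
The proof will follow the two-step scheme of the proof of \autoref{10dim1conn}, but with $K(\Zz/3,2)$ in place of $K(\Zz/3,3)$ as the candidate first nontrivial Postnikov stage.

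For the algebraic obstruction, I will use that the canonical class $\iota\in H^2(K(\Zz/3,2);\Zz/3)$ satisfies $P^1\iota=\iota^3$ by the instability relation of the Steenrod powers, and that the product
\[
  c\coloneq\iota^3\cup\beta\iota\in H^9(K(\Zz/3,2);\Zz/3)
\]
is nonzero — it is a monomial in distinct polynomial generators of the free graded-commutative mod-3 cohomology algebra of $K(\Zz/3,2)$. The plan is to construct a simply-connected, closed, smooth 9-manifold $M$ with first Postnikov approximation $f\co M\to K(\Zz/3,2)$ such that $\langle c,\rho_3 f_*[M]\rangle\neq 0$ in $\Zz/3$. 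Then for any homotopy self-equivalence $T\co M\to M$, the action $T^*\iota=k\iota$ for some $k\in\Zz/3$ propagates via naturality of the Bockstein and the Cartan formula to $T^*c=k^4 c$, which equals $k^2 c$ by Fermat's little theorem. Hence $\deg T\equiv k^2\pmod 3\in\{0,1\}$, which is never $-1$, and orientation reversal is excluded.

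To realize the partial Postnikov data, I would follow the bordism-plus-surgery strategy of \autoref{10dim1conn}. The non-degeneracy of the mod-3 Kronecker pairing provides a class $m\in H_9(K(\Zz/3,2);\Zz)$ with $\langle c,\rho_3 m\rangle\neq 0$, and it then suffices to show that the edge homomorphism
\[
  \widetilde\Omega^{\mathrm{fr}}_9(K(\Zz/3,2))\twoheadrightarrow E^\infty_{9,0}\hookrightarrow\widetilde H_9(K(\Zz/3,2))
\]
of the Atiyah-Hirzebruch spectral sequence hits $m$. Since $K(\Zz/3,2)$ is $3$-local, its reduced homology with 2-torsion coefficients vanishes, so the entries $E^2_{r,9-r}$ with purely 2-torsion bordism coefficients vanish automatically, and only the 3-torsion parts of $\Omega^{\mathrm{fr}}_*$ contribute along the 9-diagonal. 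After verifying case by case that every differential $d_r$ starting at $E^r_{9,0}$ is zero (passing to oriented bordism, where $\Omega^{\mathrm{SO}}_i=0$ for $i\in\{1,2,3,5,6,7\}$, eliminates still more entries), Kreck's surgery below the middle dimension \cite{Kreck}*{Proposition 4} improves any bordism representative $(M,f)$ into one where $f$ is a 5-equivalence, so $M$ becomes simply-connected with $\pi_2(M)\cong\Zz/3$ while $f_*[M]=m$ is preserved.

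The main obstacle is the spectral sequence computation: unlike $K(\Zz/3,3)$, the space $K(\Zz/3,2)$ has nontrivial mod-3 cohomology in every degree from~2 onward, so several $E^2_{r,9-r}$ entries along the 9-diagonal are potentially nonzero and each candidate differential out of $E^r_{9,0}$ needs separate attention. Fortunately, only the 3-local part contributes, and the differentials are heavily constrained by the multiplicative and Steenrod-algebra structure on $H_*(K(\Zz/3,2);\Zz/3)$, which should ultimately force them to vanish on the required class.
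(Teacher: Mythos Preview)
Your obstruction class $c=\iota^3\cup\beta\iota$ cannot do the job: it lies in the image of the mod-$3$ Bockstein, since $\beta(\iota^4)=4\,\iota^3\beta\iota=\iota^3\beta\iota$. But every class in $\operatorname{im}\beta$ pairs trivially with $\rho_3\bigl(H_9(K(\Zz/3,2);\Zz)\bigr)$: writing $c=\beta u$ one has $\langle\beta u,\rho_3 m\rangle=\pm\langle u,\beta_*\rho_3 m\rangle=0$, because $\beta_*\circ\rho_3=0$ from the long exact Bockstein sequence in homology. Hence there is \emph{no} integral class $m$ with $\langle c,\rho_3 m\rangle\neq0$, and your realization step cannot even begin. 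The ``non-degeneracy of the mod-$3$ Kronecker pairing'' only produces a class in $H_9(\,\cdot\,;\Zz/3)$; the lift to $H_9(\,\cdot\,;\Zz)$ is exactly where this breaks. (The paper's $10$-dimensional class $\iota\cup P^1\iota$ escapes the trap because $H^9(K(\Zz/3,3);\Zz/3)=0$, so that class is not a Bockstein.)

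The route is repairable. The other basis element $c'=\iota\cup P^1\beta\iota\in H^9(K(\Zz/3,2);\Zz/3)$ scales by the same factor $k^2$ under $T^*\iota=k\iota$, has $\beta c'\neq0$ and hence lies outside $\operatorname{im}\beta$, and one checks it \emph{does} pair nontrivially with a generator of the cyclic $3$-group $H_9(K(\Zz/3,2);\Zz)$. For the bordism step, rather than chasing framed differentials, pass to oriented bordism and use that $MSO$ localized at an odd prime splits as a wedge of Eilenberg--MacLane spectra; this makes the Thom map $\Omega^{\SO}_9(K(\Zz/3,2))\to H_9(K(\Zz/3,2))$ surjective outright. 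Also, Kreck's surgery on a $9$-manifold yields only a $4$-equivalence, not a $5$-equivalence---harmless, since a $3$-equivalence already identifies $K(\Zz/3,2)$ as the second Postnikov stage. With these fixes you would obtain a proof quite different from the paper's, which instead assembles a three-stage Postnikov tower $P^4\to P^3\to P^2$ with torsion-free homotopy groups $\Zz^3,\Zz^3,\Zz^2$ and combines an integral restriction on the automorphisms of $H^2(P^3)$ with a rational minimal-model computation.
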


\begin{proof}[Summary of proof]
The obstruction to amphicheirality in the Postnikov tower is a combination of rational and integral information. First, we construct a candidate for the Postnikov approximation $P^4\to P^3\to P^2$ of the desired manifold $M$ together with a candidate for the image of the fundamental class $m\in H_{9}(P^4)$. We show that there are very few automorphisms of $H_2(P^3)$ that can be induced from a self-homotopy equivalence $P^3\to P^3$.

Let $P^4\loc\to P^3\loc\to P^2\loc$ be the corresponding rational Postnikov tower and denote by $m_Q$ the image of $m$ in $H_9(P^4\loc)$. We show that $m_Q$ cannot be reversed by a self-map of $P^4\loc$ that induces one of the above automorphisms on $H_2$ (tensored with $\Qq$).

A short bordism argument shows that there really is a 9-dimensional manifold $M$ together with a map $g\co M\to P^4$ inducing the correct image of the fundamental class, i.\,e.\ $g_*[M]=m$. By surgery, we alter $M$ to $M'$ so that $g'\co M'\to P^3$ is a 4-equivalence and $g\co M'\to P^4\to P^4\loc$ is rationally a 5-equivalence. Due to functoriality of the Postnikov approximations $P^3$ and $P^4\loc$, $M'$ is strongly chiral.
\end{proof}

\minisec{Construction and automorphisms of \texorpdfstring{$P^3$}{P\textasciicircum 3}}

We start with a candidate for the Postnikov tower of fibrations $P^4\to P^3\to P^2$ of the desired manifold $M$. As the base, we choose $P^2\cong K(U,2)$ with $U\cong\Zz^3$. We fix a basis $(a,b,c)$ of the dual group $U^\vee\coloneq\Hom(U,\Zz)$. Likewise, we let $V\cong\Zz^3$ and fix a basis $(A,B,C)$ of the dual group $V^\vee$. The space $P^3$ is defined as a principal fibration over $P^2$ with the fibre $K(V,3)$. By \autoref{trans}, there is a bijection between the possible $k$-invariants and the first differential in the Serre spectral sequence. This correspondence allows us to define the fibration by its transgression
\[
  \vbox{\openup\jot\halign{\hfill$\displaystyle#$&$\displaystyle#$\hfill\cr
  \tau\co V^\vee & {}\to S^2(U^\vee)\cr
  A & {}\mapsto bc,\quad B\mapsto 2ac,\quad C\mapsto 3ab.\crcr}}
\]

Here, we used that the base is homotopy equivalent to $(\CP^\infty)^3$, whose cohomology algebra is the polynomial algebra $\Zz[a,b,c]=S^*(U^\vee)$.

The cohomology of $P^3$ can be computed by the Serre spectral sequence. This yields the following cohomology groups:\pagebreak[0]
\[
\vbox{\hbox{$\displaystyle\begin{array}[b]{ccl}
\toprule
i & H^i(P^3) & \text{generators} \\\midrule
0 & \Zz & 1\\
1 & 0 \\
2 & U^\vee & a,b,c\\
3 & 0 \\
4 & \Zz^3\oplus\Zz/2\oplus\Zz/3 & a^2,b^2,c^2,ac,ab\\
5 & \Zz^2 & 2aA-bB, 3aA-cC\\
\bottomrule
\end{array}$}}
\]

\begin{lemma}\label{diaglemma}
Let $T\co P^3\to P^3$ be a homotopy equivalence. Then the induced map on $H^2$ is necessarily of the form
\begin{equation}\label{diag}
 \begin{pmatrix}
   \pm1 & 0 & 0 \\
   0 & \pm1 & 0 \\
   0 & 0 & \pm1
 \end{pmatrix}
\end{equation}
with respect to the basis $(a,b,c)$.
\end{lemma}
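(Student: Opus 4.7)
The plan is to combine the functoriality of the Postnikov tower with the naturality of the transgression. Since $P^2$ is the second Postnikov stage of $P^3$, the equivalence $T$ covers a self-equivalence $T_2\co P^2\to P^2$ with $p\circ T\simeq T_2\circ p$, and can be arranged to restrict to a self-equivalence $T_F\co K(V,3)\to K(V,3)$ on the fibre. Writing $M=T_2^*|_{H^2}\in\GL_3(\Zz)$ in the basis $(a,b,c)$ and $N=T_F^*|_{H^3}\in\GL_3(\Zz)$ in the basis $(A,B,C)$, naturality of the transgression (see \autoref{trans}) gives $S^2(M)\circ\tau=\tau\circ N$. Evaluating this identity on the basis $A,B,C$ and substituting the given formulas for $\tau$ yields three equations in $\Zz[a,b,c]$:
\begin{align*}
(Mb)(Mc) &= n_{11}bc + 2n_{21}ac + 3n_{31}ab, \\
2(Ma)(Mc) &= n_{12}bc + 2n_{22}ac + 3n_{32}ab, \\
3(Ma)(Mb) &= n_{13}bc + 2n_{23}ac + 3n_{33}ab.
\end{align*}

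The first step is to read off the vanishing of the $a^2$, $b^2$, $c^2$ coefficients on the right. Writing $M(a)=(\alpha_i)$, $M(b)=(\beta_i)$, $M(c)=(\gamma_i)$, this forces $\alpha_i\beta_i=\alpha_i\gamma_i=\beta_i\gamma_i=0$ for $i=1,2,3$, so every row of $M$ has at most one nonzero entry. Combined with $\det M=\pm1$, the matrix $M$ must then be a signed permutation matrix.

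The second step is to rule out the five nontrivial signed permutations. On each right-hand side the coefficient of $ab$ is a multiple of $3$ and the coefficient of $ac$ is a multiple of $2$; since the prefactors $1$, $2$, $3$ on the left are coprime to these in the cases that matter, the corresponding off-diagonal entry sums of $M$ must themselves be divisible by $3$ or by $2$. A short case check over the six permutations of $(a,b,c)$ shows that every nontrivial permutation forces one such sum to equal $\pm1$, a contradiction. Only the identity permutation survives, giving the required form \eqref{diag}.

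The main obstacle is not a single technical step but the bookkeeping: one must set up the naturality square correctly (for which \autoref{trans} has been tailored) and organise the finite case analysis so that the distinctness of the transgression coefficients $1$, $2$, $3$ is fully exploited. It is precisely the coprimality $\gcd(2,3)=1$, together with the asymmetry between the three forms $bc$, $2ac$, $3ab$, that eliminates every nontrivial signed permutation; any coincidence among these coefficients would enlarge the group of admissible $M$.
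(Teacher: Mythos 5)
Your proposal is correct and follows essentially the same route as the paper: descend $T$ to $P^2$, homotope it to a fibre-preserving map, use naturality of the transgression to force the vanishing of the $a^2,b^2,c^2$ coefficients (hence at most one nonzero entry per row, hence a signed permutation matrix), and then exclude the nontrivial permutations. Your divisibility check on the $ab$ and $ac$ coefficients is just a reorganization of the paper's observation that, e.g., $\pm ac$ is not in the image of $\tau$ since only multiples of $2ac$ are.
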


\begin{proof}
Since $P^2$ is an Eilenberg-MacLane space $K(U,2)$ and the projection $P^3\to P^2$ induces an isomorphism on $H^2$ with any coefficients, the map $T$ and the Postnikov fibrations can be complemented to a homotopy-commutative square:
\[
 \xymatrix{
 P^3\ar[r]^-T\ar[d] & P^3\ar[d]\\
 P^2 \ar[r] & P^2
 }
\]

By the homotopy lifting property of a fibration, the map $T$ is homotopic to a fibre-preserving map $T'$. This yields a restriction to the fibre, $T'_{|K(V,3)}$, in addition to the induced map on the base $K(U,2)$. For simplicity, we write the induced maps in cohomology simply as $T^*$. From the functoriality of the Serre spectral sequence, we get
\begin{equation}\label{trafo}
  T^*\tau(v)=\tau(T^*v)
\end{equation}
for every $v\in V^\vee$.

Express the induced map on $H^2(P^3)=U^\vee$ by a matrix
\[
 M\coloneq \begin{pmatrix}
   g&h&i\\
   k&l&m\\
   p&q&r
 \end{pmatrix}\in M(3\times 3;\Zz).
\]

By equation \eqref{trafo}, we have
\[
 \tau(T^*C)=T^*(\tau(C))=T^*(3ab)=3(ga+kb+pc)(ha+lb+qc).
\]

Since the right hand side is in the image of $\tau$, the coefficients of $a^2$, $b^2$ and $c^2$ must be zero, i.\,e.\ $gh=kl=pq=0$. Considering the images of $A$ and $B$ in the same manner, we obtain that in each row of $M$, the product of two arbitrary entries must vanish. Thus there is at most one nonzero entry in each row of $M$.

Since $M$ is a unimodular matrix, it must be the product of a permutation matrix and a diagonal matrix with eigenvalues $\pm1$. We want to show that the only possible permutation is the identity.

Suppose that the permutation is a transposition, e.\,g.\ $(a\leftrightarrow b)$. This would imply $\tau(T^*A)=T^*(\tau(A))=T^*(bc)=\pm ac$ but only multiples of $2ac$ are in the image of $\tau$. Likewise, the other transpositions $(b\leftrightarrow c)$ and $(a\leftrightarrow c)$ as well as the 3-cycles $(a\to b\to c)$ and $(c\to b\to a)$ are excluded.
\end{proof}

\minisec{Construction of \texorpdfstring{$P^4$}{P\textasciicircum 4} and \texorpdfstring{$m$}m}

The next Postnikov stage, $P^4$, is again constructed as a principal fibration. We choose the fibre as a $K(W,4)$ with $W\cong\Zz^2$ and a basis $\alpha,\beta$ of the dual group $W^\vee$. The $k$-invariant is again determined by the transgression, which is chosen as the isomorphism
\[
 \vbox{\openup\jot\halign{\hfill$\displaystyle#{}$&$\displaystyle{}#$\hfill\cr
  \tau\co W^\vee &\to H^5(P^3)\cr
  \alpha &\mapsto 2aA-bB,\quad \beta\mapsto 3aA-cC.\crcr
 }}
\]

Below, the spectral sequence for this fibration immediately shows that $H^5(P^4)=0$ and therefore $H_5(P^4;\Qq)=0$. This result is needed later in \autoref{middlesurgery}. Blank entries in the following diagram represent the trivial group.
\[
 \xy
 <2.5pc,0pt>:<0pt,1.5pc>::
 (-.7,-.5);p, \ar (5.5,-.5)="id" \POS "id"+/r2\jot/ *!!<0pt,\mathaxis>!L{p},
 p, \ar (-.7,5.5)="id" \POS "id"+/l2\jot/ *!RD{q},
 \POS
 0 *!!<0pt,\mathaxis>+{\Zz}="00",
 (2,0) *!!<0pt,\mathaxis>+{\bullet}="20",
 (4,0) *!!<0pt,\mathaxis>+{\bullet}="40",
 (5,0) *!!<0pt,\mathaxis>+{\Zz^2}="50",
 (0,4) *!!<0pt,\mathaxis>+{W^\vee}="04",
 (0,0);(1,0) **@{},
 (0,-.5) *@{|}
 +(1,0) *@{|}
 +(1,0) *@{|}
 +(1,0) *@{|}
 +(1,0) *@{|}
 +(1,0) *@{|},
 (0,0);(0,1) **@{},
 (-.7,0) *@{|}
 +(0,1) *@{|}
 +(0,1) *@{|}
 +(0,1) *@{|}
 +(0,1) *@{|}
 +(0,1) *@{|},
 (2,4) *!!<0pt,\mathaxis>+{\bullet}="24",
 (4,4) *!!<0pt,\mathaxis>+{\bullet}="44",
 (5,4) *!!<0pt,\mathaxis>+{\bullet}="54",
 \POS
 (6,5.5) *!UR{E_5^{p,q}\cong E_2^{p,q}\cong H^p(P^3;H^q(W,4))},
 \let\objectstyle=\labelstyle
 \POS
 (0,-1) *{0}
 +(1,0) *{1}
 +(1,0) *{2}
 +(1,0) *{3}
 +(1,0) *{4}
 +(1,0) *{5}
 ,(-1,0) *{0}
 +(0,1) *{1}
 +(0,1) *{2}
 +(0,1) *{3}
 +(0,1) *{4}
 +(0,1) *{5}
 \endxy
\]

\begin{lemma}
There is a class $m\in H_9(P^4)$ such that
\begin{itemize}
\item $m$ is an element of infinite order,

\item
the image of $m$ in $H_9(P^4\loc)$ is never mapped to its negative under any self-map of $P^4\loc$ such that the induced map on $H^2(P^4\loc)$ is of the form \eqref{diag}.
\end{itemize}
\end{lemma}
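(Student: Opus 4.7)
My plan is to exhibit an explicit rational 9-cocycle $\omega$ in the minimal model $\mathcal{M} = (\Lambda(a,b,c,A,B,C,\alpha,\beta),d)$ of $P^4\loc$, to show that $[\omega]$ is nonzero in $H^9(\mathcal{M}) \cong H^9(P^4\loc;\Qq)$ and is fixed by every self-map $T$ of $P^4\loc$ whose action on $H^2(P^4\loc)$ has the diagonal form \eqref{diag}, and then to take $m \in H_9(P^4)$ to be an integral class whose rationalization pairs nontrivially with $\omega$. The candidate cocycle is
\[
 \omega := \alpha\,d\beta - \beta\,d\alpha - 2\,ABC = \alpha(3aA-cC) - \beta(2aA-bB) - 2\,ABC,
\]
and a direct Leibniz computation, exploiting the coincidence $(d\alpha)(d\beta) = -2acAC + 3abAB + bcBC = d(ABC)$, gives $d\omega = 2(d\alpha)(d\beta) - 2\,d(ABC) = 0$.

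To prove $[\omega] \neq 0$, I would argue by contradiction, filtering $\mathcal{M}$ by the number of $\alpha,\beta$ factors. Let $\mathcal{M}_3 := \Lambda(a,b,c,A,B,C) \subset \mathcal{M}$. Assume $\omega = d\eta$ for some $\eta \in \mathcal{M}^8$, and write $\eta = \eta_0 + \eta_1\alpha + \eta_2\beta + \eta_{11}\alpha^2 + \eta_{12}\alpha\beta + \eta_{22}\beta^2$ with $\eta_0 \in \mathcal{M}_3^8$, $\eta_1,\eta_2 \in \mathcal{M}_3^4$ and $\eta_{11},\eta_{12},\eta_{22} \in \Qq$. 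Matching the coefficients of $\alpha$ and $\beta$ in the filtration-one part of $d\eta = \omega$ yields
\[
 d\eta_1 + 2\eta_{11}\,d\alpha + \eta_{12}\,d\beta = d\beta, \qquad d\eta_2 + 2\eta_{22}\,d\beta + \eta_{12}\,d\alpha = -d\alpha
\]
in $\mathcal{M}_3^5$. Every element of $\mathcal{M}_3^4$ is a polynomial in $a,b,c$ and is therefore closed, so $d\eta_1 = d\eta_2 = 0$; linear independence of $d\alpha = 2aA-bB$ and $d\beta = 3aA-cC$ in $\mathcal{M}_3^5$ then forces $\eta_{12} = 1$ from the first equation and $\eta_{12} = -1$ from the second, a contradiction.

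For the action of an admissible $T$, write $T^*a = \epsilon_1 a$, $T^*b = \epsilon_2 b$, $T^*c = \epsilon_3 c$ with $\epsilon_i \in \{\pm 1\}$. Solving $d\circ T^* = T^* \circ d$ on the generators $A,B,C,\alpha,\beta$, and using that $ab,ac,bc$ are linearly independent in $\mathcal{M}_3^4$, uniquely forces $T^*A = \epsilon_2\epsilon_3 A$, $T^*B = \epsilon_1\epsilon_3 B$, $T^*C = \epsilon_1\epsilon_2 C$, and $T^*\alpha = \epsilon\alpha + \xi_\alpha$, $T^*\beta = \epsilon\beta + \xi_\beta$, where $\epsilon := \epsilon_1\epsilon_2\epsilon_3 \in \{\pm 1\}$ and $\xi_\alpha,\xi_\beta$ are closed degree-$4$ polynomials in $a,b,c$. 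A short computation then gives $T^*(ABC) = ABC$, $T^*(d\alpha) = \epsilon\,d\alpha$, $T^*(d\beta) = \epsilon\,d\beta$, and hence
\[
 T^*\omega = \omega + \epsilon\bigl(\xi_\alpha\,d\beta - \xi_\beta\,d\alpha\bigr).
\]
Since $\xi_\alpha,\xi_\beta$ are closed of even degree, $\xi_\alpha\,d\beta = d(\xi_\alpha\beta)$ and $\xi_\beta\,d\alpha = d(\xi_\beta\alpha)$, so the correction is exact and $T^*[\omega] = [\omega]$.

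Finally, nondegeneracy of the Kronecker pairing over $\Qq$ together with the identification $H_9(P^4\loc;\Qq) \cong H_9(P^4)\otimes\Qq$ give an integral class $m \in H_9(P^4)$ whose rationalization $m_\Qq$ satisfies $\langle\omega,m_\Qq\rangle \neq 0$; this $m$ has infinite order because any torsion class would rationalize to zero. For every allowed $T$ one has $\langle\omega, T_* m_\Qq\rangle = \langle T^*\omega, m_\Qq\rangle = \langle\omega, m_\Qq\rangle$, whereas $\langle\omega, -m_\Qq\rangle = -\langle\omega, m_\Qq\rangle$, so $T_* m_\Qq \neq -m_\Qq$. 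The hardest part will be establishing the nontriviality $[\omega]\neq 0$; the coefficient $-2$ in front of $ABC$ is forced precisely by the primes $2$ and $3$ that appear in the transgressions $d\alpha$ and $d\beta$, and is what produces the chirality.
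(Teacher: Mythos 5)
Your proof is correct and follows essentially the same route as the paper: both work in the minimal model of $P^4\loc$, exhibit a closed degree-9 form whose essential summand is $ABC$ (your $[\omega]$ is proportional to the paper's class modulo the exact term $d(\alpha\beta)$), show it is fixed by every automorphism inducing the diagonal form \eqref{diag}, and then dualize to obtain $m$. If anything, your explicit treatment of the closed correction terms $\xi_\alpha,\xi_\beta$ in $T^*\alpha,T^*\beta$ and your filtration argument for $[\omega]\neq 0$ are slightly more careful versions of the steps the paper asserts more briefly.
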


By $P^4\loc$, we mean the rational localization of $P^4$, as described in Griffiths and Morgan \cite{GM}*{Chapter 7}. The above properties of $m$ obviously remain if $m$ is replaced by a nonzero multiple.

\begin{proof}
Consider the rational cohomology of $P^4\loc$. The minimal model for it (uniquely determined up to isomorphism) is the free, graded-commutative, rational differential graded algebra
\[
 \mathfrak M\coloneq \Qq[a',b',c',A',B',C',\alpha',\beta']
\]
with degrees $|a'|=|b'|=|c'|=2$, $|A'|=|B'|=|C'|=3$ and $|\alpha'|=|\beta'|=4$ and differentials
\[
 \vbox{\openup\jot\halign{\hfill$\displaystyle#$\hfill\cr
 da'=db'=dc'=0,\cr
 dA'=b'c',\quad dB'=2a'c', \quad dC'=3a'b',\cr
 d\alpha'=2a'A'-b'B',\quad d\beta'=3a'A'-c'C'.\crcr
 }}
\]

The generators are chosen so that $a'\in \mathfrak M^2$ maps to $a\in H^2(P^4\loc)$ under the isomorphisms
\[
  H^*(\mathfrak M) \cong H^*(P^4\loc;\Qq)\cong H^*(P^4;\Qq),
\]
and likewise for the other generators. These isomorphisms are natural with respect to self-maps of $P^4$. For the second isomorphism, this follows immediately from the universal property of a localization; see Griffiths and Morgan \cite{GM}*{Theorem 7.7 and Definition on page 90}. The naturality of the first isomorphism is proved in \cite{GM}*{Theorem 14.1}.

Consider the element $(d\alpha)\beta-ABC\in\mathfrak M^9$. It is easily verified that it is closed, thus it represents a cohomology class $\bar m_\Qq\in H^9(\mathfrak M)\cong H^9(P^4\loc)$. The cohomology class is nonzero since there is no expression in $\mathfrak M^8$ whose differential contains a summand $ABC$.

Let $m_\Qq\in H_9(P^4\loc)$ be a homology class such that $\langle\bar m_\Qq,m_\Qq\rangle\in\Qq$ is nonzero. The class $m_\Qq$ itself might not be in the image of $H_9(P^4)\to H_9(P^4\loc)$ but a nonzero multiple of $m_\Qq$ certainly is. We replace $m_\Qq$ by this multiple and choose a preimage $m\in H_9(P^4)$.

Now consider an automorphism of $\mathfrak M$. Note that the differentials in every Hirsch extension which is used to build $\mathfrak M$ are injective, i.\,e.\ $d$ is injective on the vector spaces $\Qq\{A,B,C\}$ and $\Qq\{\alpha,\beta\}$. For this reason, the automorphism of $\mathfrak M$ is completely determined by the restriction to the base degree \hbox{$\mathfrak M^2=\Qq\{a,b,c\}$}.

Let $T_a$ be the automorphism of $\mathfrak M^2$ which is given by
\[
 a\mapsto -a ,\quad b\mapsto b,\quad c\mapsto c.
\]

The automorphism $T_a$ extends uniquely to $\mathfrak M$ by
\[
 A\mapsto A ,\quad B\mapsto -B,\quad C\mapsto -C, \quad \alpha\mapsto -\alpha,\quad \beta\mapsto -\beta.
\]

It can be quickly checked that $T_a$ fixes $\bar m_\Qq$. Likewise, the automorphisms $T_b$ and $T_c$ which reverse $b$ respectively $c$ fix $\bar m_\Qq$. Hence, every automorphism $T$ of $P^4$ that induces a diagonal matrix of the form \eqref{diag} on $H^2(P^4\loc)\cong H^2(\mathfrak M)\cong \mathfrak M^2$ fixes $\bar m_\Qq$. Since the evaluation is natural, we have
\[
 \langle T_*m_\Qq,\bar m_\Qq\rangle = \langle m_\Qq,T^*\bar m_\Qq\rangle = \langle m_\Qq,\bar m_\Qq\rangle,
\]
so $m_\Qq$ cannot be reversed by $T$. The same clearly holds for $m$.
\end{proof}

\minisec{Bordism argument}

\begin{prop}\label{bord}
There is a framed, closed, smooth, 9-dimensional manifold $M$ together with a map $g\co M\to P^4$ such that $g_*[M]$ is a nonzero multiple of $m\in H_{9}(P^4)$.
\end{prop}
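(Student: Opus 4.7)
The plan is to parallel the bordism argument used for the 10"-dimensional manifold in the proof of \autoref{10dim1conn}, but in a rationalized form, since the statement only requires a nonzero \emph{multiple} of $m$. I would consider the Atiyah-Hirzebruch spectral sequence
\[
 E^2_{p,q}\cong H_p(P^4;\Omega^{\mathrm{fr}}_q) \Rightarrow \Omega^{\mathrm{fr}}_{p+q}(P^4),
\]
and recall that the Thom homomorphism $\Omega^{\mathrm{fr}}_9(P^4)\to H_9(P^4)$ factors through the edge map $\Omega^{\mathrm{fr}}_9(P^4)\twoheadrightarrow E^\infty_{9,0}\hookrightarrow E^2_{9,0}\cong H_9(P^4)$.

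By Serre's finiteness theorem, the coefficient groups $\Omega^{\mathrm{fr}}_q\cong\pi_q^s$ are finite for every $q>0$. After tensoring with $\Qq$ the spectral sequence is therefore concentrated on the row $q=0$, all differentials vanish, and the rationalized Thom homomorphism
\[
 \Omega^{\mathrm{fr}}_9(P^4)\otimes\Qq \to H_9(P^4;\Qq)
\]
is an isomorphism. Since the previous lemma provides $m$ as a class of infinite order, its image in $H_9(P^4;\Qq)$ is nonzero; hence a nonzero integer multiple $Nm$ lies in the image of the integral Thom map. (Because $P^4$ is built from finitely generated Eilenberg"-MacLane spaces by two principal fibrations, $H_9(P^4)$ is finitely generated, so it suffices to multiply by the order of its torsion subgroup once a rational preimage has been cleared of denominators.)

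Any geometric representative of this preimage provides the required framed, closed, smooth 9"-manifold $M$ with $g\co M\to P^4$ and $g_*[M]=Nm$. I do not expect a substantial obstacle here: the advantage over the integral computation carried out in dimension 10 (see \autoref{fig:AHSS}) is that only Serre's qualitative finiteness is needed, so neither odd"-torsion columns nor the precise form of the differentials matter. The conceptual heavy lifting has already been done in the preceding lemma, where $m$ was produced as an infinite"-order class via the nontrivial pairing with $(d\alpha)\beta-ABC$ in the minimal model.
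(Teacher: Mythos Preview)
Your proposal is correct and follows essentially the same approach as the paper: both use the Atiyah--Hirzebruch spectral sequence for $\Omega^{\mathrm{fr}}_*(P^4)$ and the finiteness of $\Omega^{\mathrm{fr}}_q$ for $q>0$ to conclude that a nonzero multiple of the infinite-order class $m$ survives to $E^\infty_{9,0}$. The paper phrases the same idea slightly more directly---each differential out of $E^k_{9,0}$ lands in a torsion group, so a nonzero multiple of $m$ lies in its kernel---which sidesteps the need to invoke finite generation of $H_9(P^4)$; but this is a cosmetic difference, not a substantive one.
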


\begin{proof}
In analogy to the 10-dimensional case, it is sufficient to investigate the differentials starting from $E^k_{9,0}$ in the Atiyah-Hirzebruch spectral sequence for $\Omega^\fr_{*}(P^4)$. Since all coefficient groups $\Omega^\fr_i$ for $i>0$ are finite abelian groups, each of the finitely many differentials has a nonzero multiple of $m$ in the kernel, so a nonzero multiple of $m$ survives to the $E^\infty$-page.
\end{proof}

\minisec{Surgery}

In order to exploit the functoriality of the Postnikov towers, we aim to replace $M$ by surgery with a manifold $M'$ such that the corresponding map $g'\co M'\to P^4$ is a 4-equivalence and rationally a 5-equivalence. Note also that $M'$ is then automatically simply-connected.

Since $M$ is framed, its stable normal bundle $\nu\co M\to\BO$ is trivial. Thus, there is a lift of $\nu$ to the path space $\EO\simeq P\BO\simeq*$. Fix any such lift $\hat\nu\co M\to\EO$. Together with the map $g$ from the previous proposition, we use this to define a fibration and a lift
\[
 \xymatrix@!C=1.8pc{ & B\ar[d]^\xi && & P^4\times\EO \ar[d]^{\rlap{\scriptsize\vbox{\hbox{(projection to $\EO$,}\hbox{then end point map)}}}}\\
 M \ar[r]^\nu\ar[ru]^{\bar\nu} &\BO &\quad& M \ar[r]^\nu\ar[ru]^{g\times\hat\nu} &\BO.
 \POS "1,2";"2,4" **@{} ?<>(.5) *\txt{as}}
\]

The lift $\bar\nu$ is a \emph{normal $B$-structure} on $M$ in the language of Kreck \cite{Kreck}*{\textsection\,2}. By \cite{Kreck}*{Proposition 4}, $[M,g]$ is bordant over $P^4$ to $[M',g']$ such that $g'$ is a 4-equivalence. The proof of \autoref{1conn9ex} is completed by the following proposition. This is an extension of Kreck's surgery technique below the middle dimension to surgery on rational homology classes in the middle dimension. We give a detailed proof in \cite{Muellner}*{Proposition 64}.

\begin{prop}\label{middlesurgery}
Let $M'$ be an $m$-dimensional, closed, smooth, simply-connected manifold with normal $B$-structure $\bar\nu'\co M'\to B$ which is a $\left[\frac m2\right]$-equivalence. Assume that $m$ is odd and at least 5. Also assume that $H_{[m/2]+1}(B;\Qq)=0$. Then $(M',\bar\nu')$ can be replaced by a finite sequence of surgeries with $(M'',\bar\nu'')$ such that $\bar\nu''\co M''\to B$ is again a $\left[\frac m2\right]$-equivalence and additionally \hbox{$\pi_{[m/2]+1}(B,M'')\otimes\Qq=0$}.
\end{prop}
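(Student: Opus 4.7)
The plan is to kill $\pi_{k+1}(B,M')\otimes\Qq$, where I write $k=[m/2]$ so that $m=2k+1$ and $k\geq 2$, by a finite sequence of surgeries on embedded $k$-spheres in $M'$, extending Kreck's surgery below the middle dimension (\cite{Kreck}*{Proposition~4}) rationally into the middle dimension.

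First I would identify the relative homotopy group with a concrete subspace of $H_k(M';\Qq)$. Since $\bar\nu'$ is a $k$-equivalence and $M'$ is simply connected, $B$ is also simply connected and the pair $(B,M')$ is $k$-connected; the relative rational Hurewicz theorem therefore gives $\pi_{k+1}(B,M')\otimes\Qq\cong H_{k+1}(B,M';\Qq)$. Combined with the assumption $H_{k+1}(B;\Qq)=0$, the rational long exact sequence of the pair identifies this with the kernel of $\bar\nu'_*\co H_k(M';\Qq)\to H_k(B;\Qq)$. Let $r$ be its dimension and choose elements $z_1,\ldots,z_r\in\pi_{k+1}(B,M')$ whose classes form a rational basis of this kernel.

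Next, each boundary $\partial z_i\in\pi_k(M')$ is represented by a map $S^k\to M'$, and since $m=2k+1\geq 5$ and $M'$ is simply connected, Haefliger/Whitney general position realises these as pairwise disjoint embeddings. The $B$-structure supplies a stable framing which, after replacing $z_i$ by a suitable even multiple if necessary (harmless rationally), trivialises the normal bundle of each sphere compatibly with the $B$-structure. Performing the associated surgeries simultaneously yields $M''$, a bordism $W$ carrying a $B$-structure extending both boundaries, and an extension $\bar\nu_W\co W\to B$ of $\bar\nu'$. On the $i$-th handle $D^{k+1}\times D^{k+1}$ I would use the product extension $(u,v)\mapsto z_i(u)$, which depends only on the core factor. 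Since both $M'\hookrightarrow W$ and $M''\hookrightarrow W$ are $k$-equivalences (each is complementary to a dual $(k+1)$-handle in $W$), and every attaching class $\partial z_i$ already dies in $\pi_k(B)$, the composition $\bar\nu''\co M''\hookrightarrow W\to B$ is again a $k$-equivalence, and $M''$ inherits simple connectivity.

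To finish, I would run the long exact sequence of the triple $(B,W,M'')$:
\[
  H_{k+1}(W,M'';\Qq)\xrightarrow{\phi}H_{k+1}(B,M'';\Qq)\xrightarrow{\psi}H_{k+1}(B,W;\Qq)\to H_k(W,M'';\Qq)=0,
\]
where the rightmost group vanishes because the $(k+1)$-handles contribute nothing in degree $k$. The target $H_{k+1}(B,W;\Qq)$ vanishes by the analogous triple $(B,W,M')$: the groups $H_{k+1}(W,M';\Qq)$ and $H_{k+1}(B,M';\Qq)$ are both isomorphic to $\Qq^r$, and the canonical map between them sends the $i$-th core disk to $z_i$, so it is an isomorphism by construction. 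For $\phi=0$, the generators of $H_{k+1}(W,M'';\Qq)\cong\Qq^r$ are the co-cores $\{0\}\times D^{k+1}$ of the handles, and under our product extension each is sent to the constant disk at $z_i(0)\in B$, hence represents zero. Rational Hurewicz for the $k$-connected simply-connected pair $(B,M'')$ then yields $\pi_{k+1}(B,M'')\otimes\Qq=0$. I expect the main obstacle to be the framing compatibility in the surgery step, since $m=2k+1$ lies at the boundary of the Whitney/Haefliger embedding range and the normal-bundle classification $\pi_{k-1}(\SO(k+1))$ is often nonzero; the homological arguments afterwards are formal diagram chases exploiting the freedom to extend the $B$-map across each handle by ignoring one factor.
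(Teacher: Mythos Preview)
Your overall strategy matches what the paper describes (it only says the result extends Kreck's surgery below the middle dimension rationally into the middle dimension, deferring details to the thesis). The embeddings, the normal-bundle triviality, the preservation of the $k$-equivalence, and the vanishing of $H_{k+1}(B,W;\Qq)$ are all correct. Incidentally, your ``even multiple'' precaution is unnecessary: the rank-$(k{+}1)$ normal bundle of $S^k\subset M^{2k+1}$ is already in the stable range, and $\pi_k(\SO(k{+}1))\to\pi_k(\SO)$ is onto, so every stable framing destabilizes.

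There is, however, a genuine gap in your argument for $\phi=0$. Since $H_{k+1}(B;\Qq)=0$, the boundary $H_{k+1}(B,M'';\Qq)\hookrightarrow H_k(M'';\Qq)$ is injective, so $\phi([c_i])=0$ holds exactly when the belt sphere $\partial c_i$ is rationally null-homologous in $M''$. That is an intrinsic statement about the manifold $M''$ and has nothing to do with how $\bar\nu_W$ is extended over the handle: the map $\phi$ is the inclusion-induced map $H_{k+1}(W,M'')\to H_{k+1}(\mathrm{Cyl}(\bar\nu_W),M'')$, not a push-forward along $\bar\nu_W$, so the fact that your product extension sends the co-core to a constant point in $B$ is irrelevant.

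What actually forces $\phi=0$ is Poincar\'e--Lefschetz duality on the cobordism $W$, together with the rational linear independence of the surgery classes. Linear independence of the $[\partial z_i]$ makes $\partial_1\colon H_{k+1}(W,M';\Qq)\to H_k(M';\Qq)$ injective, so $H_{k+1}(W;\Qq)\to H_{k+1}(W,M';\Qq)$ is zero. Poincar\'e--Lefschetz duality interchanges the roles of $M'$ and $M''$; combined with the long exact sequence of the triple $(W,\partial W,M'')$ this yields that $H_{k+1}(W;\Qq)\to H_{k+1}(W,M'';\Qq)$ is \emph{surjective}, hence $\partial_2=0$ and every belt sphere vanishes rationally in $H_k(M'')$. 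This is the classical ``surgery on a class of infinite order creates no new middle-dimensional homology'' step, and it is the piece missing from your proof.
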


\subsubsection{Extension to dimension 13}

\begin{thm}
Let $M$ be a manifold as in the previous section with all described properties. The product $N\coloneq M\times\CP^2$ is a simply-connected closed, smooth, strongly chiral, 13-dimensional manifold.

\vskip-\prevdepth
\hrule height0pt
\end{thm}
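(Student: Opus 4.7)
The manifold $N = M\times\CP^2$ is simply-connected, closed, smooth, and 13-dimensional because both factors are; only the strong chirality claim requires work. The plan is to adapt the minimal-model obstruction from \autoref{1conn9ex} to the product with $\CP^2$. Let $T\co N\to N$ be any self-map of nonzero degree; I want to show $\deg T\neq -1$. By Hopfianness, it suffices to treat the case where $T$ is a homotopy equivalence, so that $T^*$ is an isomorphism on integral cohomology.

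First I would constrain $T^*$ on $H^2(N;\Zz) = H^2(M;\Zz)\oplus\Zz\langle x\rangle$. Writing $T^*x = \lambda a + \mu b + \nu c + \kappa x$, the identity $x^3=0$ combined with the Künneth summand $H^2(M)\otimes\Zz\langle x^2\rangle$ of $H^6(N;\Zz)$ forces either $\kappa=0$ or $\lambda=\mu=\nu=0$. Since $\CP^2$ contributes no new $k$-invariants in this range, the transgression relations $\tau(A)=bc$, $\tau(B)=2ac$, $\tau(C)=3ab$ of $M$'s Postnikov tower extend verbatim to the fibration $P^3(N) = P^3(M)\times K(\Zz,2)$. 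Applying the argument of \autoref{diaglemma} to the $4\times 4$ integer matrix of $T^*$ on $H^2(N;\Zz)$, the "coefficient of $x^2$ zero" and "coefficients of $ax,bx,cx$ zero" parts of each condition $(T^*a)(T^*b),\,(T^*a)(T^*c),\,(T^*b)(T^*c)\in\operatorname{im}\tau$ successively force the $x$-columns and $x$-row to be trivial. One concludes $T^*x=\pm x$ and that $T^*$ preserves $H^2(M;\Zz)$, on which it acts as a diagonal matrix with $\pm 1$ entries (the integer coefficients $2$ and $3$ again ruling out non-trivial permutations).

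Next I would extend $T^*$ to the minimal model $\mathfrak{M}_N = \mathfrak{M}_M\otimes\mathfrak{M}_{\CP^2}$. Because $\mathfrak{M}_{\CP^2}$ has no generators in degree 3 and its only generator $y$ in degree 5 satisfies $dy=x^3$, compatibility with the differential pins down $T^*A,T^*B,T^*C\in\Qq\{A,B,C\}$ from the diagonal signs, and in degree 4 forces $T^*\alpha=\epsilon\alpha+Z_\alpha$ and $T^*\beta=\epsilon\beta+Z_\beta$, with $\epsilon\coloneq\epsilon_a\epsilon_b\epsilon_c\in\{\pm 1\}$ and $Z_\alpha,Z_\beta$ arbitrary cocycles in $\mathfrak{M}_N^4$.

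Finally I would evaluate $T^*$ on the top cohomology class of $N$, represented by $\bar m_\Qq\cdot x^2$ with $\bar m_\Qq$ the degree-9 cocycle of \autoref{1conn9ex}. A direct expansion, using that $\tilde TA\cdot\tilde TB\cdot\tilde TC=ABC$ and that the diagonal part of the action on $\alpha,\beta$ fixes $\bar m_\Qq$ exactly as in the proof for $M$, gives
\[
 T^*(\bar m_\Qq\cdot x^2) - \bar m_\Qq\cdot x^2 \;=\; \epsilon\,(d\alpha)\,Z_\beta\cdot x^2.
\]
The main obstacle is verifying that this correction is always exact. The $\Qq\{a^2,b^2,c^2\}$-part of $Z_\beta$ contributes terms of the form $(d\alpha)\,a^2\,x^2 = d(a^2\alpha\,x^2)$ (using $a^2\,d\alpha=d(a^2\alpha)$ and $dx=0$), while the parts of $Z_\beta$ involving $x$ or $x^2$ produce $x^3=dy$ or $x^4=d(xy)$, both exact in $\mathfrak{M}_{\CP^2}$. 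Consequently $T^*$ fixes $[\bar m_\Qq\cdot x^2]$, so $\deg T=+1$ and $N$ is strongly chiral.
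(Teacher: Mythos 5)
Your proposal follows the same route as the paper's (sketched) proof: $P^3_N\simeq P^3\times\CP^\infty$ with the same $k$-invariants, rational model $\mathfrak M\otimes\Qq[x]$ for the fourth Postnikov stage, and the obstruction class $((d\alpha)\beta-ABC)x^2$. Two points deserve correction, though neither is fatal. First, the transgression conditions $(T^*a)(T^*b),(T^*a)(T^*c),(T^*b)(T^*c)\in\operatorname{im}\tau$ constrain only $T^*a,T^*b,T^*c$ (killing their $x$-components and forcing the diagonal $\pm1$ form); they say nothing about $T^*x$, which enters no $k$-invariant, so they cannot "force the $x$-column to be trivial" as you claim. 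What does give $T^*x=\pm x$ is your separate $x^3=0$ argument combined with unimodularity: once the last row vanishes, $T^*$ preserves the sublattice $\Zz\{a,b,c\}$, so the $x$-coefficient $\kappa$ of $T^*x$ is $\pm1$, and then $3\kappa^2\ell=0$ in the torsion-free summand $H^2(M)\otimes\langle x^2\rangle$ forces $\ell=0$. The paper in fact settles for the weaker upper-triangular form, which also suffices: an $a,b,c$-component of $T^*x$ only produces terms of Künneth bidegree different from $(9,4)$, and these pair to zero with the class $m_M\otimes m_{\CP^2}$. Second, the relevant rational model is that of $P^4_{N,(0)}$, namely $\mathfrak M\otimes\Qq[x]$ (the degree-$5$ generator $y$ with $dy=x^3$ belongs to the full model of $\CP^2$ and only appears at the fifth stage); in $\mathfrak M\otimes\Qq[x]$ the class $x^3$ is \emph{not} exact, so your appeal to $x^3=dy$ is off-model — but it is also unnecessary, since $(d\alpha)Z_\beta x^2=d(\alpha Z_\beta x^2)$ for any cocycle $Z_\beta$, which is all you need.
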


\begin{proof}[Idea of proof]
The Postnikov tower P$^k_N$ of $N$ is very similar to the Postnikov tower of $M$. We have e.\,g.\ $P^2_N\simeq(\CP^\infty)^4$ and $P^3_N\simeq P^3\times\CP^\infty$. With an analogous computation to \autoref{diaglemma} it is proved that for a homotopy equivalence $T\co P^3_N\to P^3_N$, the induced map on $H^2$ is necessarily of the form
\[
 \begin{pmatrix}
   \pm1 & 0 & 0 & *\\
   0 & \pm1 & 0 & *\\
   0 & 0 & \pm1 & *\\
   0 & 0 & 0 & \pm1
 \end{pmatrix}
\]
with respect to a suitable basis $(a,b,c,x)$ for $H^2(P_N^3)$.

\lineskiplimit1pt % normally: 0pt. Prevent that lines touch.

The localization \smash{$P^4\Nloc$} has the minimal algebra $\mathfrak M\otimes \Qq[x]$, where $\mathfrak M$ is the rational minimal algebra of $P^4\loc$. The fundamental class of $N$ is detected by $((d\alpha)\beta-ABC)x^2\in H^{13}(P^4\Nloc)$ and cannot be reversed by any map with matrix form as above.
\end{proof}

\section{Strongly chiral manifolds in all bordism classes}\label{sec:bordism}

In this section, we finish the proof of \autoref{thm:A} by splitting it into three separate cases: \autoref{oddbordant} shows the existence of strongly chiral manifolds in all bordism classes in all odd dimensions $\geq 3$, \autoref{evenbordant} in the even dimensions $\geq 6$, and \autoref{4bordant} deals with dimension four and signature zero. Recall that the bordism classes in dimension four are detected by the signature (Milnor and Stasheff \cite{MS}*{Chapters 17, 19}), and a manifold with nonzero signature is strongly chiral.

\begin{prop}\label{oddbordant}
Given an aspherical, strongly chiral, closed, smooth, $n$-dimensional manifold, there are strongly chiral manifolds in every $n$-dimensional oriented bordism class.
\end{prop}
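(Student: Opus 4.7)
The plan is to realize an arbitrary oriented bordism class by a connected sum $M \coloneq A \connsum N$, choosing $N$ to be a simply-connected manifold whose bordism class compensates for that of $A$. Given $[B] \in \Omega_n^{\textup{SO}}$, I would start by finding a simply-connected representative $N$ of the class $[B]-[A]$: take any representative and perform surgery on embedded circles to kill $\pi_1$. Since $n \geq 3$, every loop bounds a trivial tubular neighborhood, so these surgeries take place entirely inside the manifold and preserve the oriented bordism class. Forming $M = A \connsum N$ then yields $[M] = [A] + [N] = [B]$, and because $N$ is simply-connected, $\pi_1(M) \cong \pi_1(A) * \pi_1(N) = \pi_1(A)$.

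To establish strong chirality of $M$, I would use the pinch map $c \co M \to A$ defined as the identity on the $A$ summand (with its connect-sum disk removed) and as the constant map to a point of the complementary disk on $N$ minus its disk. This $c$ has degree $1$ and induces an isomorphism on $\pi_1$. Suppose for contradiction that $f \co M \to M$ has degree $-1$; then $c \circ f \co M \to A$ has degree $-1$. Since $A$ is aspherical, the homotopy class of any map into $A$ is determined by the induced homomorphism on fundamental groups, so $c \circ f$ is homotopic to $\bar f \circ c$ for a self-map $\bar f \co A \to A$ realizing the endomorphism $(c \circ f)_* \circ c_*^{-1}$ of $\pi_1(A)$. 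Comparing degrees gives
\[
 \deg(\bar f) = \deg(\bar f) \cdot \deg(c) = \deg(\bar f \circ c) = \deg(c \circ f) = -1,
\]
which contradicts the strong chirality of $A$. Hence no such $f$ exists and $M$ is strongly chiral.

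The only nontrivial ingredient is the surgery step producing a simply-connected representative $N$ of $[B]-[A]$; this is standard but does require $n \geq 3$, matching the hypothesis that an aspherical $n$-manifold exists. Once $N$ is in hand, the asphericity of $A$ carries all the weight: it converts any hypothetical orientation-reversing self-map of the composite $M$ into one of the strongly chiral factor $A$, yielding the contradiction. I expect this to be essentially the whole proof, with the only delicate point being the unobstructed transfer of orientation reversal across the pinch map, which is precisely what asphericity guarantees.
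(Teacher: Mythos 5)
Your proposal is correct and follows essentially the same route as the paper: take a simply-connected representative $N$ of the complementary bordism class, form the connected sum with the aspherical strongly chiral manifold, and use the degree-one collapse map together with the fact that maps into an aspherical space are determined up to homotopy by their effect on $\pi_1$ (the paper phrases this as functoriality of the first Postnikov stage) to transfer a hypothetical degree $-1$ map to the aspherical factor. The only cosmetic difference is that the paper lets $N$ range over all classes rather than targeting $[B]-[A]$ directly, which is the same argument.
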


\begin{proof}
Denote the given manifold by $M$. By a surgery argument, every oriented bordism class in dimensions $\geq 2$ contains a simply-connected representative $N$. We claim that $M\connsum N$ does not admit a self-map of degree $-1$. The collapsing map $p\co M\connsum N\to M$ has degree 1 and induces an isomorphism on the fundamental group. Since $M$ is aspherical, it is the first Postnikov stage of $M\connsum N$. Since the Postnikov approximation is functorial up to homotopy and the image of the fundamental class $p_*[M\connsum N]=[M]$ cannot be mapped to its negative, $M\connsum N$ is chiral.

Since $N$ runs through all $n$-dimensional bordism classes, the connected sum $M\connsum N$ does as well, which proves the proposition.
\end{proof}

The aspherical manifolds assumed in \autoref{oddbordant} can for example be the explicit odd-dimensional, strongly chiral manifolds from \autoref{oddexsec}. One can also apply the proposition to the manifolds of Belolipetsky and Lubotzky \cite{BL}, thus finishing the alternative proof of \autoref{thm:A}. The proof given below with its separate treatment of dimension $4$ is still interesting since e.\,g.\ \autoref{4bordant} produces manifolds with finite fundamental group.

\begin{prop}[\cite{Muellner}*{Proposition 74}]\label{evenbordant}
In every even dimension $\geq 6$ and every oriented bordism class, there is a strongly chiral, connected representative.
\end{prop}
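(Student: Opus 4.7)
My plan is to mimic the proof of Proposition \ref{oddbordant} by exhibiting, for each even $n \geq 6$, a strongly chiral $n$-manifold $M_0$ whose chirality is preserved under connected sum with any simply-connected closed $n$-manifold. Since every oriented bordism class in dimensions $\geq 5$ contains a simply-connected representative (by surgery on $\pi_1$ below the middle dimension), letting $N$ run over simply-connected representatives of $[B]-[M_0]$ then produces connected, strongly chiral representatives in every bordism class $[B] \in \Omega^{SO}_n$.

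For $M_0$ I would use the product constructions of \autoref{prodchiral}: in dimensions $n \equiv 2 \bmod 4$, take $M_0 = \Sigma \times X$ as in \autoref{6mod8} (or its higher-dimensional analogues from \autoref{2mod4}), where $\Sigma$ is a strongly chiral lens space and $X$ is a strongly chiral aspherical manifold (a mapping torus of a torus, or an iterated product); in dimensions $n \equiv 0 \bmod 4$ with $n \geq 8$, take $M_0 = \CP^{n/2}$ for bordism classes of nonzero signature (strong chirality is then automatic by signature), and an analogous product containing an aspherical strongly chiral factor for the signature-zero classes. In every case $\pi_1(M_0)$ has a nontrivial aspherical summand, so by Seifert--van Kampen the collapse $p \colon M_0 \# N \to M_0$ is a degree-one map inducing an isomorphism on fundamental groups and hence on the first Postnikov stage.

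Strong chirality of $M_0 \# N$ is then deduced by combining the Hopf/Umkehr technique of \autoref{products} with the functoriality of the Postnikov tower. The rational middle cohomology splits as $H^{n/2}(M_0 \# N;\Qq) \cong H^{n/2}(M_0;\Qq) \oplus H^{n/2}(N;\Qq)$, and the cup-product pairing is block-diagonal: the $M_0$-block contains the Kronecker-dual pair $[\Sigma]^*,[X]^*$ of the fundamental class of $M_0$, the $N$-block carries $N$'s own intersection form, and cross cup-products between the two blocks vanish in the connected sum. A hypothetical degree $-1$ self-map $T$ of $M_0 \# N$ would induce, via Postnikov functoriality, an endomorphism of $K(\pi_1(M_0),1)$ that preserves the classes coming from the aspherical factor $X$; in particular $T^*[X]^* = \lambda[X]^*$ for a scalar $\lambda$. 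The strong chirality of the aspherical factor $X$ (using that aspherical Hopfian manifolds have homotopical chirality equivalent to strong chirality, see \autoref{sec:oldexamples}) forces $\lambda = 1$, and then the cup-product relations $[\Sigma]^*\cup[X]^* = [M_0\#N]^*$ together with $(T^*[\Sigma]^*)^{\cup 2}=0$ and the orthogonality of the $N$-block allow the Betti-number argument of \autoref{products} to be applied to the $M_0$-block, ruling out $\deg T = -1$.

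The main obstacle will be showing that the extra middle-dimensional cohomology from $N$ cannot conspire with the $M_0$-block to contrive a degree $-1$ behavior without already forcing degree $-1$ on the $M_0$-block alone. This requires that Postnikov naturality sufficiently confines the action of $T^*$ on the classes detected by the aspherical factor $X$, so that the orthogonality of the $M_0$- and $N$-blocks under the connected-sum cup product reduces the question to the already-established chirality of $\Sigma \times X$.
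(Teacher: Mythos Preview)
Your argument for the chirality of $M_0\# N$ has a genuine gap: the rational middle-cohomology analysis cannot see the obstruction carried by the lens space factor~$\Sigma$, which is a torsion phenomenon. Take the case $n\equiv 2$ mod~4 with $M_0=\Sigma\times X$, $\dim\Sigma=\dim X=n/2$. Granting $T^*[X]^*=[X]^*$ from Postnikov functoriality, write $T^*[\Sigma]^*=a[\Sigma]^*+b[X]^*+v$ with $v\in H^{n/2}(N;\Qq)$. Cupping with $[X]^*$ forces $a=-1$, and that is \emph{all} you get: since $n/2$ is odd the intersection form is skew, so $(T^*[\Sigma]^*)^{\cup 2}=0$ holds automatically and constrains neither $b$ nor $v$. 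The Betti-number step from \autoref{products} does not transfer here, because there it produced an honest map $\Sigma\to M$ via the product projection, whereas a self-map of the connected sum $(\Sigma\times X)\# N$ admits no projection to $\Sigma$ from which to extract a degree-$a$ self-map of $\Sigma$. You have deduced $a=-1$ but have no way to contradict it: rationally $\Sigma$ is a sphere and carries no chirality obstruction. (Your choice of $M_0$ is also left unspecified for $n\equiv 0$ mod~4 with signature zero, and the reference to \autoref{2mod4} gives products of lens spaces without an aspherical factor.)

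The paper sidesteps all of this by choosing $M_0=M^{n-3}\times L^3$ uniformly for every even $n\geq 6$, with $M^{n-3}$ one of the aspherical mapping tori from \autoref{oddexsec} and $L^3$ a strongly chiral $3$-dimensional lens space. The first Postnikov stage of $M_0\# N$ is then $M^{n-3}\times L^\infty$, and the image of the fundamental class in $H_n(M^{n-3}\times L^\infty)$ retains the torsion information from $L^3\hookrightarrow L^\infty$. One shows directly, by analysing the automorphisms of $\pi_1(M^{n-3})\times\Zz/p$ together with the multiplicative structure of $H^*(L^\infty;\Zz/p)$, that this class is never sent to its negative. No bookkeeping in the rational middle cohomology of the connected sum is needed; the obstruction lives entirely in the homology of $K(\pi_1,1)$, exactly as in \autoref{oddbordant}.
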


\begin{proof}
Instead of the $n$-dimensional manifold $M$ above use a product $M^{n-3}\times L^3$, where $L^3$ is a strongly chiral, 3-dimensional lens space and $M^{n-3}$ is one of the manifolds from \autoref{oddexsec}. Since strong chirality of lens spaces is detected by the cohomology structure, also the induced image of the fundamental class $[L]$ under the inclusion $L^3\subset L^\infty$ to the corresponding infinite-dimensional lens space cannot be reversed by a self-map. By examining the possible endomorphisms of the fundamental group, it can be shown that with our choice of $M$ from \autoref{oddexsec}, the image of the fundamental class $[M^{n-3}\times L^3]$ can never be reversed in the homology of the first Postnikov approximation $M^{n-3}\times L^\infty$. Therefore, by a very similar argument as in the previous lemma, all connected sums $(M\times L)\connsum N$ for simply-connected manifolds $N$ are strongly chiral.
\end{proof}

It remains to prove that there are strongly chiral, 4-dimensional manifolds with signature zero. Since every simply-connected, closed 4-manifold with signature zero is topologically amphicheiral, such a manifold must certainly have a nontrivial fundamental group. We use again the idea that the obstruction to amphicheirality should already be manifest in the 1-type, as it was in the two preceding propositions.

\begin{prop}\label{4exmp}
Let $\pi$ be a finite group such that every automorphism of $\pi$ is an inner automorphism and there is an element $m\in H_4(\pi)$ of order greater than two. Then there is a closed, connected, smooth, strongly chiral 4-manifold with fundamental group $\pi$ and signature equal to any given value.
\end{prop}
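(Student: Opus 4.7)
The plan is to follow the same pattern as in Propositions~\ref{oddbordant} and~\ref{evenbordant}: I would build a closed, smooth, connected, oriented $4$"-manifold $M$ with $\pi_1(M)\cong\pi$ equipped with a classifying map $f\co M\to B\pi$ realizing $m$ as the image of the fundamental class, and then use the rigidity of $B\pi$ forced by the hypothesis $\Out(\pi)=1$ to block degree $-1$ through the element $m$.

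First I would realize $m$ as an oriented bordism class. In the Atiyah-Hirzebruch spectral sequence $E^2_{p,q}=H_p(B\pi;\Omega^{SO}_q)\Rightarrow\Omega^{SO}_{p+q}(B\pi)$, every outgoing differential from $E^2_{4,0}=H_4(\pi)$ lands in some $E^r_{4-r,r-1}$ with $r-1\in\{1,2,3\}$ or $4-r<0$; since $\Omega^{SO}_1=\Omega^{SO}_2=\Omega^{SO}_3=0$, all such targets vanish and the edge homomorphism $\Omega^{SO}_4(B\pi)\twoheadrightarrow H_4(\pi)$ is surjective. I then pick $(M_0,f_0)$ mapping to $m$ and apply surgery below the middle dimension over the normal $B$"-structure pulled back along $f_0\times\nu_{M_0}$ (Kreck \cite{Kreck}*{Proposition 4}) to arrange that $f_0\co M_0\to B\pi$ is a $2$-equivalence; then $M_0$ is connected, $\pi_1(M_0)\cong\pi$, and the image of the fundamental class is unchanged.

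To fix the signature I set $M\coloneq M_0\connsum k\,\CP^2\connsum l\,\overline{\CP^2}$ with $k-l$ chosen so that $\operatorname{sign}(M)$ equals the prescribed integer. Summing in simply-connected pieces leaves $\pi_1(M)=\pi$ unchanged, and the collapse $p\co M\to M_0$ is a degree-one map inducing an isomorphism on $\pi_1$; hence $f\coloneq f_0\circ p\co M\to B\pi$ classifies the fundamental group and still satisfies $f_*[M]=m$.

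For strong chirality, let $T\co M\to M$ have degree $d$. Since $\pi$ is finite, $M$ is \Hopfian; moreover, if $d=\pm1$ then $T$ is surjective on $\pi_1$ by Hausmann \cite{Hausmann}, so $T_*\co\pi\to\pi$ is an automorphism, hence inner by assumption. An inner automorphism of $\pi$ induces a self-map of $B\pi=K(\pi,1)$ that is freely homotopic to the identity, so the self-map $\tilde T$ of $B\pi$ with $\tilde T\circ f\simeq f\circ T$ satisfies $\tilde T_*=\id$ on $H_4(B\pi)$. Comparing $f_*(T_*[M])=d\cdot m$ with $\tilde T_*(f_*[M])=m$ forces $(d-1)\,m=0$; taking $d=-1$ gives $2m=0$, contradicting $\operatorname{ord}(m)>2$. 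The hardest technical step is the realization---ensuring that the AHSS edge map hits the specific class $m$ and that Kreck's surgery can be carried out without disturbing the image of the fundamental class---after which the chirality conclusion is essentially automatic from the asphericity of $B\pi$ and the inner-automorphism hypothesis.
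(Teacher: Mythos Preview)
Your proposal is correct and follows essentially the same route as the paper: realize $m$ via the surjective edge homomorphism in the Atiyah--Hirzebruch spectral sequence for $\Omega^{\SO}_4(B\pi)$ (using $\Omega^{\SO}_1=\Omega^{\SO}_2=\Omega^{\SO}_3=0$), do surgery below the middle dimension to make the reference map a $2$-equivalence, adjust the signature by connected sums with $\pm\CP^2$, and then rule out degree $-1$ via $\Out(\pi)=1$ and the triviality of inner automorphisms on $H_*(\pi)$. Your chirality step is in fact slightly leaner than the paper's, since you use directly that a degree $\pm1$ map is $\pi_1$-surjective (hence an automorphism of the finite group $\pi$) rather than first invoking the \Hopfian{} property to upgrade to a homotopy equivalence.
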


\begin{proof}
In analogy to the arguments for \autoref{10dim1conn} and \autoref{1conn9ex}, consider the oriented bordism group $\Omega^{\SO}_4(K(\pi,1))$. The Atiyah-Hirzebruch spectral sequence below shows that there is no differential from or to $E^2_{4,0}$. Hence, the Thom homomorphism $\Omega^{\SO}_{4}(K(\pi,1)) \to H_{4}(K(\pi,1))$ is surjective. Let $(M',f')$ be a preimage of $m$.
\[
 \xy
 <2.3pc,0pt>:<0pt,1.6pc>::
 (-.5,-.5);p \ar (4.7,-.5)="id"
   \POS "id"+/r2\jot/ *!!<0pt,\mathaxis>!L{r},
 p \ar (-.5,4.5)="id"
   \POS "id"+/l2\jot/ *h!!<0pt,\mathaxis>!R{s},
 0 *!!<0pt,\mathaxis>{\Zz},
 +(1,0) *!!<0pt,\mathaxis>{\pi\ab},
 +(1,0) *!!<0pt,\mathaxis>{*},
 +(1,0) *!!<0pt,\mathaxis>+{*} ="30",
 +(1,0) *!!<0pt,\mathaxis>+{H_4(\pi)} ="40",
 "30";
 (2,1) *!!<0pt,\mathaxis>+{0} **\dir{-};
 (1,2) *!!<0pt,\mathaxis>+{0} **\dir{}="12" ?<;?>>> **\dir{-},
 "12";
 (0,3) *!!<0pt,\mathaxis>+{0} **\dir{-},
 (0,2) *!!<0pt,\mathaxis>{0},
 (0,1) *!!<0pt,\mathaxis>{0},
 (0,4) *!!<0pt,\mathaxis>{\Zz},
 (5,4) *h!UR{E^2_{r,s}\cong H_r(K(\pi,1);\Omega^{\SO}_s)},
 "40"!R(.5) \ar_-{d_i} "12"
 \endxy
\]

By surgery below the middle dimension, $(M',f')$ can be altered to $(M,f)$ in the same bordism class such that $f\co M\to K(\pi,1)$ is a 2-equivalence; see Kreck \cite{Kreck}*{Proposition 4}. The signature of $M$ can then be corrected to any value by taking the connected sum with several copies of $\CP^2$ or $-\CP^2$. The map $f$ is tacitly precomposed with the collapsing map $M\connsum \pm\CP^2\to M$, which is also a 2-equivalence.

The map $f$ is a first Postnikov approximation for $M$, and so every homotopy equivalence of $M$ induces an automorphism of $K(\pi,1)$. Since every automorphism of $\pi$ is inner and inner automorphisms induce the identity on group homology (see Brown \cite{Brown}*{Proposition II.6.2}), $m=f_*[M]$ is fixed under any automorphism of $\pi$. Thus, since $m\neq -m$, the fundamental class $[M]$ can never be sent to its negative under any homotopy equivalence of $M$. Since manifolds with finite fundamental group are \Hopfian{}, every map $M\to M$ of degree $\pm 1$ is a homotopy equivalence.
\end{proof}

In the rest of this section, we present an infinite set of finite groups that fulfill the requirements of \autoref{4exmp}. For this, a certain family of finite groups is studied, which all have only inner automorphisms. An infinite subset of these groups also fulfills the condition on the fourth homology group (\autoref{HG}).

Let $p_1,\ldots,p_k$ be pairwise distinct odd primes. Let $G_i$ be the finite split metacyclic group $G_i\coloneq \Zz/p_i\rtimes \Zz/(p_i-1)$ defined by an isomorphism $\Zz/(p_i-1)\cong\Aut(\Zz/p_i)$.

Every automorphism of the product $G_1\times\ldots\times G_k$ is inner. This extends an example in Huppert's book \cite{Huppert}*{Beispiel I.4.10}, where the case of a single factor is proved. For the author's full proof, inspired by the cited reference, see \cite{Muellner}*{Proposition 77}.

We now turn to the homology groups of $G_1\times\ldots\times G_k$ with constant integral coefficients. Wall computed in \cite{Wall61} the integral homology of finite split metacyclic groups. Applying the Künneth theorem to this computation, we easily get the following result:

\begin{prop}[\cite{Muellner}*{Proposition 80}]\label{HG}
The group $G_1\times\ldots\times G_k$ has an element of order greater than 2 in $H_4(G)$ if and only if there are indices $i,j\in\{1,\ldots,k\}$, $i\neq j$ such that either $p_i=3$ and $p_j\equiv 1$ \textup{mod 3} or $\gcd(p_i-1,p_j-1)>2$.
\end{prop}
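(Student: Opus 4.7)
The plan is to apply the K\"unneth theorem iteratively to Wall's computation of $H_*(G_i)$. Since the orders $p_i$ and $p_i-1$ of the two cyclic constituents of $G_i$ are coprime, the Lyndon--Hochschild--Serre spectral sequence with integer coefficients for $\Zz/p_i \to G_i \to \Zz/(p_i-1)$ collapses, and Wall's formula specializes to
\[
H_0(G_i) = \Zz, \qquad H_1(G_i) \cong \Zz/(p_i-1), \qquad H_2(G_i) = H_4(G_i) = 0,
\]
with $H_3(G_i) \cong \Zz/(p_i-1)$ when $p_i > 3$ and $H_3(G_i) \cong \Zz/2 \oplus \Zz/3$ when $p_i = 3$. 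The essential structural fact to extract is that $H_{2j}(G_i) = 0$ for $1 \leq j \leq 2$: in the degree range that matters, only the odd-degree groups $H_1$ and $H_3$ can contribute.

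Next I would iterate the K\"unneth formula. The vanishing of $H_2$ and $H_4$ on each factor forces every summand of $H_4(G_1 \times \dots \times G_k)$ to involve each $G_i$ in degree $0$, $1$, or $3$, possibly coupled by Tor corrections that package two $H_1$'s into an effective degree-$2$ group. After reshuffling, the surviving summands take one of three shapes: pair terms $H_3(G_i) \otimes H_1(G_j)$ for distinct indices $i \neq j$; quadruple tensor terms $H_1(G_{i_1}) \otimes H_1(G_{i_2}) \otimes H_1(G_{i_3}) \otimes H_1(G_{i_4})$ over four distinct indices; and Tor terms of the form $\Zz/\gcd(p_{a_1}-1, \ldots, p_{a_r}-1)$ for subsets of indices. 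Each such summand is cyclic of order dividing some gcd of two or more of the $p_a - 1$, with the lone exception that when $p_i = 3$, the extra $\Zz/3$-summand in $H_3(G_i)$ tensored with $\Zz/(p_j-1)$ produces a $\Zz/\gcd(3, p_j-1)$ factor.

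The proof finishes with an arithmetic check. Since every $p_a$ is odd, each $p_a - 1$ is even and every pairwise gcd is at least $2$. A pair term $H_3(G_i) \otimes H_1(G_j)$ therefore contains an element of order greater than $2$ precisely when $\gcd(p_i-1, p_j-1) > 2$ (the case $p_i, p_j > 3$) or when $p_i = 3$ and $3 \mid p_j - 1$ (coming from the special $\Zz/3$); these are exactly the two conditions in the proposition. The quadruple tensor and Tor terms add no new conditions, because any gcd of three or more of the $p_a - 1$'s that exceeds $2$ already has some pair exceeding $2$. Conversely, if neither condition of the proposition holds, then every pairwise gcd equals $2$ and no $\Zz/3$-summand arises, so every direct summand of $H_4(G)$ is annihilated by $2$.

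The main technical obstacle is the careful bookkeeping of the iterated K\"unneth formula: as $k$ grows, one must enumerate the triple-Tor, quadruple-tensor, and mixed contributions and verify that each remains a cyclic group whose order divides a gcd of some subcollection of the $p_a - 1$. Once this is in place, the odd-torsion analysis above is immediate, and the biconditional of the proposition follows.
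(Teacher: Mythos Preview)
Your approach is correct and is essentially the one indicated in the paper: compute $H_*(G_i)$ in degrees $\leq 4$ from Wall's results, then apply the K\"unneth theorem iteratively and read off the odd torsion. The paper gives only the one-line summary ``Wall's computation plus K\"unneth'' and defers the bookkeeping to the thesis, so your write-up is in fact more detailed than what appears here; the only point that would benefit from tightening is the enumeration of the iterated K\"unneth summands, but your observation that every such summand is cyclic of order dividing a gcd of at least two of the $p_a-1$ (or $\gcd(3,p_j-1)$ in the exceptional case) is exactly the controlling fact.
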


In summary, we proved that the following theorem:
\begin{thm}\label{4bordant}
There are infinitely many (with finite fundamental groups of different order) closed, connected, smooth, strongly chiral 4-manifolds with the signature equal to any given value.
\end{thm}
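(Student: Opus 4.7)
The plan is to assemble the theorem from the two propositions just established, \autoref{4exmp} and \autoref{HG}, by producing an infinite family of finite groups $G$ which simultaneously satisfy both hypotheses: every automorphism is inner, and $H_4(G)$ contains an element of order strictly greater than $2$.

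Concretely, I would fix $p_1 = 3$ and let $p_2$ range over the (infinite) set of primes with $p_2 \equiv 1 \pmod 3$. For each such choice, form the split metacyclic groups $G_1 = \Zz/3\rtimes\Zz/2$ and $G_2 = \Zz/p_2 \rtimes \Zz/(p_2-1)$, and set $G \coloneq G_1 \times G_2$. The general statement preceding \autoref{HG} guarantees that every automorphism of a product of such metacyclic factors (with pairwise distinct odd primes) is inner, and the divisibility criterion of \autoref{HG} is met by the pair $(p_1, p_2) = (3, p_2)$ since $p_2 \equiv 1 \pmod 3$, so $H_4(G)$ has an element $m$ of order $> 2$.

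With $G$ and $m$ in hand, \autoref{4exmp} directly produces a closed, connected, smooth, strongly chiral $4$-manifold with fundamental group $G$ and signature any prescribed integer. Since $|G| = 6\cdot p_2(p_2 - 1)$ grows without bound as $p_2$ runs through its infinite set of admissible primes, the resulting manifolds have finite fundamental groups of arbitrarily large (and in particular pairwise distinct for a suitable subsequence) order, so the family is infinite even after fixing the signature.

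The main obstacle is purely logistical rather than mathematical: one must confirm that the infinitely many groups produced this way really do have distinct orders (or at least take a subsequence whose orders go to infinity, so that only finitely many repetitions can occur). This is immediate from $|G| = 6\cdot p_2(p_2-1)$ being strictly monotonic in $p_2$. All heavy lifting — the surgery and bordism argument realizing $m$ as the image of a fundamental class, the proof that inner automorphisms fix $m$ and that \Hopfian{}ness upgrades homotopical chirality to strong chirality, and the computation of $H_4$ via Wall's formula and Künneth — has already been carried out in the cited propositions, so the final theorem is obtained by a clean combination with no further technical input.
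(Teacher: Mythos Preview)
Your proposal is correct and follows essentially the same route as the paper: the theorem is stated there as a direct summary of \autoref{4exmp} together with the infinite family of products $G_1\times\cdots\times G_k$ of split metacyclic groups and \autoref{HG}. Your choice $p_1=3$, $p_2\equiv 1\pmod 3$ is simply an explicit instantiation of one branch of the criterion in \autoref{HG}, and the monotonicity of $|G|=6\,p_2(p_2-1)$ cleanly handles the ``different orders'' clause that the paper leaves implicit.
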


\section{Orientation-reversing diffeomorphisms of minimal order}\label{sec:minimalorder}

Another facet in the study of orientation reversal is the following question:
\begin{emptythm}
If a manifold is smoothly amphicheiral, what is the minimal order of an orientation-reversing diffeomorphism?
\end{emptythm}
Siebenmann presented a 3-manifold that admits an orientation-reversing diffeomorphism but none of finite order \cite{Siebenmann}*{page 176}. Another example is obtained by combining two theorems: Kreck proved in \cite{Kreck09} that there are infinitely many closed, simply-connected, smooth 6-manifolds on which no finite group can act effectively. However, \autoref{thm:B} asserts that every such manifold is smoothly amphicheiral. We can conclude:
\begin{prop}
There are infinitely many closed, simply-connected, smooth 6-manifolds which admit an orientation-reversing diffeomorphism but none of finite order.
\end{prop}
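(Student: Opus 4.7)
The plan is to assemble the proposition directly from the two results quoted in the paragraph: Kreck's construction in \cite{Kreck09} of an infinite family of pairwise non-diffeomorphic, closed, simply-connected, smooth 6-manifolds $\{M_i\}_{i\in\mathbb{N}}$ on which no non-trivial finite group acts effectively by diffeomorphisms, together with the 6-dimensional case of \autoref{thm:B}, which guarantees that every closed, simply-connected, smooth 6-manifold is smoothly amphicheiral. Since these two facts are both already available, my job is essentially to link them correctly.

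First I would fix the family $\{M_i\}$ provided by Kreck. Since each $M_i$ is simply-connected, it is in particular orientable, so the notion of an orientation-reversing diffeomorphism makes sense. By \autoref{thm:B} applied in dimension 6, for every $i$ there exists an orientation-reversing self-diffeomorphism $\varphi_i\co M_i\to M_i$; this establishes smooth amphicheirality. Next I would argue that no such $\varphi_i$ can have finite order: if $\varphi_i^n=\id$ for some $n\geq 1$, then the cyclic subgroup $\langle\varphi_i\rangle\leq \mathrm{Diff}(M_i)$ is finite. Because $\varphi_i$ reverses orientation it is in particular not the identity, so $\langle\varphi_i\rangle$ is non-trivial, and since it is cyclic and $\varphi_i$ is by construction a generator, the tautological action of $\langle\varphi_i\rangle$ on $M_i$ is effective. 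This contradicts Kreck's defining property of $M_i$, so $\varphi_i$ must have infinite order.

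The main obstacle here is really only to notice that effectiveness of the cyclic action is automatic, so that the contrapositive of Kreck's theorem applies; there is no delicate computation, since the novelty lies entirely in the two cited inputs. Finally, infiniteness of the family in the proposition follows from the infiniteness of Kreck's family, because the manifolds $M_i$ are pairwise non-diffeomorphic by construction and we have produced one orientation-reversing diffeomorphism on each.
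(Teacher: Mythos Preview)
Your argument is correct and is exactly the paper's approach: the proposition is stated there simply as a direct consequence of Kreck's asymmetric 6-manifolds together with the 6-dimensional case of \autoref{thm:B}, and you have spelled out the short connecting step (a finite-order orientation-reversing diffeomorphism would generate a nontrivial finite cyclic group acting effectively). One phrasing point: make explicit that the contradiction applies to \emph{any} orientation-reversing diffeomorphism of $M_i$, not only the particular $\varphi_i$ furnished by \autoref{thm:B}; your argument already does this, since it uses nothing about $\varphi_i$ beyond $\varphi_i\neq\id$.
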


In view of diffeomorphisms of finite order, let $f\co M\to M$ be an orientation-reversing diffeomorphism of order $2^k\cdot l$ with $l$ odd. Then $f^l$ is an orientation-reversing diffeomorphism of order $2^k$. Thus, only powers of two are relevant for the minimal order of an orientation-reversing diffeomorphism, and we ask the following question:
\begin{emptythm}
Given $k>1$, is there a manifold which admits an orientation-reversing diffeomorphism of order $2^k$ but none of order $2^{k-1}$?
\end{emptythm}

This question can be answered in the affirmative. The key to one direction is that the possible degrees of self-maps of lens spaces are well-known. Indeed, let $L$ be a lens space of dimension $2n-1$ with fundamental group $\Zz/r$. There is a self-map of $L$ with degree $d\in\Zz$ inducing the endomorphism $x\mapsto e\cdot x$ on the fundamental group if and only if $e^n\equiv d$ \textup{mod} $r$, as proved by Olum \cite{Olum}*{Theorem~V}. We get the following statement as a corollary.

\begin{lemma}\label{lensmin}
A lens space of dimension $2n-1$ with fundamental group of order $r>2$ does not admit a self-map of degree $-1$ whose order is a divisor of $n$.
\end{lemma}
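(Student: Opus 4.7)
The plan is to apply Olum's theorem (quoted immediately before the lemma) directly, using the constraint that a self-map of finite order must induce an element of the same (or smaller) order on the fundamental group.

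Suppose for contradiction that $f \co L \to L$ has degree $-1$ and order $m$, where $m$ divides $n$. The map $f_*$ on $\pi_1(L) \cong \Zz/r$ is multiplication by some integer $e$ modulo $r$. Since $f^m = \id_L$, the induced map $(f_*)^m$ is the identity on $\Zz/r$, so $e^m \equiv 1 \pmod r$. Because $m \mid n$, raising to the $n/m$-th power gives $e^n \equiv 1 \pmod r$.

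Now I would invoke Olum's theorem: the existence of a self-map of $L$ of degree $d$ inducing $x \mapsto e x$ on $\pi_1$ forces $d \equiv e^n \pmod r$. Applied to $f$, this yields $-1 \equiv e^n \equiv 1 \pmod r$, whence $r \mid 2$, contradicting $r > 2$.

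The argument is essentially immediate once Olum's theorem is in hand, so there is no real obstacle; the only point meriting a word of care is that $f_*$ on $\pi_1$ is literally multiplication by an integer $e$ (the abelianness of $\Zz/r$ makes the induced endomorphism unambiguous), and that $f^m = \id$ truly forces $(f_*)^m = \id$ on $\pi_1$ rather than merely a conjugate of the identity.
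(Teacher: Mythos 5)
Your argument is correct and is precisely the derivation the paper intends: the lemma is stated as an immediate corollary of Olum's theorem, and you supply the short computation ($e^m\equiv 1$ from $f^m=\id$, hence $e^n\equiv 1$, contradicting $e^n\equiv -1$ from Olum) in the natural way. The two points you flag for care (well-definedness of $e$ and $(f_*)^m=\id$) are indeed harmless since $\pi_1$ is abelian.
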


As immediate examples, this shows that no 3-dimensional lens space admits an orientation-reversing involution and no 7-dimensional lens space admits an orientation-reversing diffeomorphism of order less than 8.

To complement \autoref{lensmin}, we construct lens spaces with orientation-reversing diffeomorphisms of minimal order. Let $L$ be a lens space with prime fundamental group $\Zz/p$, $p\geq5$, and dimension $p-2$. Let $p-1=2^k\cdot l$ be the factorization into even and odd parts. By \autoref{lensmin}, $L$ has no orientation-reversing diffeomorphism of order $2^{k-1}$.

Since $p$ is prime, the group of multiplicative units in $\Zz/p$ is a cyclic group of order $p-1$. Let $c\in\Zz/p$ be a primitive root mod $p$, i.\,e.\ a generator of this group. In the following, abbreviate $(p-1)/2$ by $n$. Consider the lens space $L\coloneq L_p(c,c^2,\ldots,c^n)$. We follow the convention that $L$ is formed as the quotient of the unit sphere $S^{p-2}\subset\Cc^n$ under the $\Zz/p$-action
\[
   (z_1,\ldots,z_n)  \mapsto\left(\exp\left(\tfrac{2\pi ic}p\right)\cdot z_1,\ldots,\exp\left(\tfrac{2\pi ic^n}p\right)\cdot z_n\right).
\]

The diffeomorphism
\[
 \vbox{\openup\jot\halign{\hfill$\displaystyle#{}$&$\displaystyle{}#$\hfill\cr
  \widetilde f\co S^{p-2} &\to  S^{p-2} \cr
  \mskip-60mu(z_1,z_2,\ldots,z_n) & \mapsto(z_2,\ldots,z_n,\overline{z_1})\mskip-60mu\crcr
 }}
\]
preserves the $\Zz/p$-orbits. (Here we use that $c^n\equiv-1$ mod $p$ because both sides  of the equation are the unique element of order two in the cyclic group of units.) Moreover, $\widetilde f$ reverses the orientation, so it induces an orientation-reversing diffeomorphism $f$ on the lens space $L$. It follows from the definition that $\widetilde f^{2n}=\widetilde f^{p-1}=\id$. This implies that $f^l$ is an orientation-reversing diffeomorphism of order $2^k$.

By Dirichlet's theorem, the arithmetic progression
\[
 2^k+1,\quad 3\cdot 2^k+1,\quad 5\cdot 2^k+1,\quad\ldots
\]
contains infinitely many primes. Thus, for every positive integer $k$, there are suitable primes $p$, and this finally proves \autoref{thm:C}.

\section{\bibname}
\begin{biblist}
\bib{Barden}{article}{
   author={Barden, Dennis},
   title={Simply connected five-manifolds},
   journal={Ann.\ of Math.\ (2)},
   volume={82},
   date={1965},
   pages={365--385},
   issn={0003-486X},
   %review={\MR{0184241 (32 \#1714)}},
   label={Barden},
   eprint={http://www.jstor.org/stable/1970702}
}

\bib{BL}{article}{
   author={Belolipetsky, Mikhail},
   author={Lubotzky, Alexander},
   title={Finite groups and hyperbolic manifolds},
   journal={Invent.\ Math.},
   volume={162},
   date={2005},
   number={3},
   pages={459--472},
   issn={0020-9910},
   %review={\MR{2198218 (2006k:57091)}},
   doi={10.1007/s00222-005-0446-z}
}

\bib{Brown}{book}{
   author={Brown, Kenneth S.},
   title={Cohomology of groups},
   series={Graduate Texts in Mathematics},
   volume={87},
   publisher={Springer-Verlag}*{language={german}},
   place={New York},
   date={1982},
   pages={x+306},
   isbn={0-387-90688-6},
%   review={\MR{672956 (83k:20002)}},
   label={Brown}
}

\bib{Conner}{book}{
   author={Conner, Pierre E.},
   title={Differentiable periodic maps},
   series={Lecture Notes in Mathematics},
   volume={738},
   edition={2},
   publisher={Springer},
   place={Berlin},
   date={1979},
   pages={iv+181},
   isbn={3-540-09535-7},
   %review={\MR{548463 (81f:57018)}},
   label={Conner},
   note={\PrintDOI{10.1007/BFb0063217}}
}

\bib{FQ}{book}{
   author={Freedman, Michael H.},
   author={Quinn, Frank},
   title={Topology of 4-manifolds},
   series={Princeton Mathematical Series},
   volume={39},
   publisher={Princeton University Press},
   place={Princeton, NJ},
   date={1990},
   pages={viii+259},
   isbn={0-691-08577-3},
   %review={\MR{1201584 (94b:57021)}},
   label={FQ}
}

\bib{GM}{book}{
   author={Griffiths, Phillip A.},
   author={Morgan, John W.},
   title={Rational homotopy theory and differential forms},
   series={Progress in Mathematics},
   volume={16},
   publisher={Birkhäuser Boston},
   place={Mass.},
   date={1981},
   pages={xi+242},
   isbn={3-7643-3041-4},
%   review={\MR{641551 (82m:55014)}},
   label={GM}
}

\bib{HatcherAT}{book}{
   author={Hatcher, Allen},
   title={Algebraic topology},
   publisher={Cambridge University Press},
   place={Cambridge},
   date={2002},
   pages={xii+544},
   isbn={0-521-79160-X},
   isbn={0-521-79540-0},
%   review={\MR{1867354 (2002k:55001)}},
   note={\newline\url{http://www.math.cornell.edu/~hatcher/AT/ATpage.html}},
   label={HatcherAT}
}

\bib{Hausmann}{article}{
   author={Hausmann, Jean-Claude},
   title={Geometric Hopfian and non-Hopfian situations},
   conference={
      title={Geometry and topology},
      address={Athens, Ga.},
      date={1985}
   },
   book={
      series={Lecture Notes in Pure and Appl. Math.},
      volume={105},
      publisher={Dekker},
      place={New York}
   },
   date={1987},
   pages={157--166}
   %review={\MR{873292 (88f:57031)}},
}

\bib{Hirsch}{book}{
   author={Hirsch, Morris W.},
   title={Differential topology},
   series={Graduate Texts in Mathematics},
   volume={33},
   note={Corrected reprint of the 1976 original},
   publisher={Springer-Verlag},
   place={New York},
   date={1994},
   pages={x+222},
   isbn={0-387-90148-5},
   %review={\MR{1336822 (96c:57001)}},
   label={Hirsch}
}

\bib{Hopf}{article}{
    author={Hopf, Heinz},
    title={Zur Algebra der Abbildungen von Mannigfaltigkeiten}*{language={german}},
    journal={J.~Reine Angew.\ Math.}*{language={german}},
    volume={163},
    date={1930},
    pages={71--88},
    label={Hopf},
    eprint={http://docserver.digizeitschriften.de/digitools/resolveppn.php?PPN=GDZPPN002171295}
}

\bib{Huppert}{book}{
   author={Huppert, Bertram},
   title={Endliche Gruppen I}*{language={german}},
%   language={German},
   series={Die Grundlehren der Mathematischen Wissenschaften, Band 134}*{language={german}},
   publisher={Springer-Verlag}*{language={german}},
   place={Berlin},
   date={1967},
   pages={xii+793},
%   review={\MR{0224703 (37 \#302)}},
   label={Huppert}
}

\bib{Kahn}{article}{
   author={Kahn, Donald W.},
   title={Induced maps for Postnikov systems},
   journal={Trans.\ Amer.\ Math.\ Soc.},
   volume={107},
   date={1963},
   pages={432--450},
   issn={0002-9947},
%   review={\MR{0150777 (27 \#764)}},
   label={Kahn},
   eprint={http://www.jstor.org/stable/1993811}
}

\bib{Kawakubo}{article}{
   author={Kawakubo, Katsuo},
   title={Orientation reversing involution},
   journal={J.\ Math.\ Kyoto Univ.},
   volume={16},
   date={1976},
   number={1},
   pages={113--115},
   issn={0023-608X},
   %review={\MR{0400265 (53 \#4100)}},
   label={Kawakubo}
}

\bib{KM}{article}{
   author={Kervaire, Michel A.},
   author={Milnor, John W.},
   title={Groups of homotopy spheres. I},
   journal={Ann.\ of Math.\ (2)},
   volume={77},
   date={1963},
   pages={504--537},
   issn={0003-486X},
   %review={\MR{0148075 (26 \#5584)}},
   label={KM},
   eprint={http://www.jstor.org/stable/1970128}
}

\bib{Kirby}{article}{
   author={Kirby, Rob},
   title={Problems in low dimensional manifold theory},
   conference={
      title={Algebraic and geometric topology},
      address={Proc.\ Sympos.\ Pure Math., Stanford Univ., Stanford, Calif.},
      date={1976}
   },
   book={
      series={Proc.\ Sympos.\ Pure Math.},
      volume={32},
      part={Part 2},
      publisher={Amer.\ Math.\ Soc.},
      place={Providence, R.I.}
   },
   date={1978},
   pages={273--312},
   %review={\MR{520548 (80g:57002)}},
   label={Kirby}
}

\bib{Kochman}{book}{
   author={Kochman, Stanley O.},
   title={Bordism, stable homotopy and Adams spectral sequences},
   series={Fields Institute Monographs},
   volume={7},
   publisher={American Mathematical Society},
   place={Providence, RI},
   date={1996},
   pages={xiv+272},
   isbn={0-8218-0600-9},
   %review={\MR{1407034 (97i:55017)}},
   label={Kochman}
}

\bib{Kotschick92}{article}{
   author={Kotschick, Dieter},
   title={Orientation-reversing homeomorphisms in surface geography},
   journal={Math.\ Ann.},
   volume={292},
   date={1992},
   number={2},
   pages={375--381},
   issn={0025-5831},
   %review={\MR{1149041 (93f:14019)}},
   label={Kotschick92},
   doi={10.1007/BF01444627}
}

\bib{Kreck}{article}{
   author={Kreck, Matthias},
   title={Surgery and duality},
   journal={Ann.\ of Math.\ (2)},
   volume={149},
   date={1999},
   number={3},
   pages={707--754},
   issn={0003-486X},
%   review={\MR{1709301 (2001a:57051)}},
   label={Kreck99},
   eprint={http://www.jstor.org/stable/121071}
}

\bib{Kreck09}{article}{
  author={Kreck, Matthias},
  title={Simply connected asymmetric manifolds},
  journal={J.~Topol.},
  volume={2},
  date={2009},
  number={2},
  pages={249--261},
  label={Kreck09},
  doi={10.1112/jtopol/jtp008}
}

\bib{Lueck}{article}{
   author={Lück, Wolfgang},
   title={A basic introduction to surgery theory},
   conference={
      title={Topology of high-dimensional manifolds, No. 1, 2},
      address={Trieste},
      date={2001},
   },
   book={
      series={ICTP Lect.\ Notes},
      volume={9},
      publisher={Abdus Salam Int.\ Cent.\ Theoret.\ Phys., Trieste},
   },
   date={2002},
   pages={1--224},
   %review={\MR{1937016 (2004a:57041)}},
   label={Lück},
   eprint={http://publications.ictp.it/lns/vol9.html}
}

\bib{MeyerSmith}{book}{
   author={Meyer, Dagmar},
   author={Smith, Larry},
   title={Poincaré duality algebras, Macaulay's dual systems, and Steenrod operations},
   series={Cambridge Tracts in Mathematics},
   volume={167},
   publisher={Cambridge University Press},
   place={Cambridge},
   date={2005},
   pages={viii+193},
   isbn={978-0-521-85064-3},
   isbn={0-521-85064-9},
   %review={\MR{2177162 (2006h:13012)}},
}

\bib{Milnor66}{article}{
   author={Milnor, John W.},
   title={Whitehead torsion},
   journal={Bull.\ Amer.\ Math.\ Soc.},
   volume={72},
   date={1966},
   pages={358--426},
   issn={0002-9904},
   %review={\MR{0196736 (33 \#4922)}},
   label={Milnor66},
   doi={10.1090/S0002-9904-1966-11484-2}
}

\bib{MH}{book}{
   author={Milnor, John W.},
   author={Husemoller, Dale},
   title={Symmetric bilinear forms},
   note={Ergebnisse der Mathematik und ihrer Grenzgebiete, Band 73}*{language={german}},
   publisher={Springer-Verlag}*{language={german}},
   place={New York},
   date={1973},
   pages={viii+147},
   %review={\MR{0506372 (58 \#22129)}},
   label={MH}
}

\bib{MS}{book}{
   author={Milnor, John W.},
   author={Stasheff, James D.},
   title={Characteristic classes},
   series={Annals of Mathematics Studies},
   volume={76},
   publisher={Princeton University Press},
   place={Princeton, N.~J.},
   date={1974},
   pages={vii+331},
   %review={\MR{0440554 (55 \#13428)}},
   label={MS}
}

\bib{MT}{book}{
   author={Morgan, John},
   author={Tian, Gang},
   title={Ricci flow and the Poincaré conjecture},
   series={Clay Mathematics Monographs},
   volume={3},
   publisher={American Mathematical Society},
   place={Providence, RI},
   date={2007},
   pages={xlii+521},
   isbn={978-0-8218-4328-4},
   %review={\MR{2334563 (2008d:57020)}},
   label={MT}
}

\bib{Muellner}{book}{
   author={Müllner, Daniel},
   title={Orientation reversal of manifolds},
   series={Bonner Mathematische Schriften},
   volume={392},
   place={Bonn},
   date={2009},
   %eprint={http://bib.math.uni-bonn.de/BMSTitelliste.html},
   note={\url{http://bib.math.uni-bonn.de/BMSTitelliste.html}},
   label={Müllner}
}

\bib{Novikov}{article}{
   author={Novikov, Sergei P.},
   title={Topological invariance of rational classes of Pontrjagin},
   %language={Russian},
   journal={Dokl.\ Akad.\ Nauk SSSR},
   volume={163},
   date={1965},
   pages={298--300},
   issn={0002-3264},
   %review={\MR{0193644 (33 \#1860)}},
   label={Novikov}
}

\bib{Olum}{article}{
   author={Olum, Paul},
   title={Mappings of manifolds and the notion of degree},
   journal={Ann.\ of Math.\ (2)},
   volume={58},
   date={1953},
   pages={458--480},
   issn={0003-486X},
   %review={\MR{0058212 (15,338a)}},
   label={Olum},
   eprint={http://www.jstor.org/stable/1969748}
}

\bib{Papadima}{article}{
   author={Papadima, Stefan},
   title={\href{http://dx.doi.org/10.1007/BF00151508}{Classification of Poincaré duality algebras over the rationals}},
   journal={Geom.\ Dedicata},
   volume={17},
   date={1984},
   number={2},
   pages={199--205},
   issn={0046-5755},
   %review={\MR{771197 (86e:16036)}},
}

\bib{Puppe}{article}{
   author={Puppe, Volker},
   title={Do manifolds have little symmetry?},
   journal={J.\ Fixed Point Theory Appl.},
   volume={2},
   date={2007},
   number={1},
   pages={85--96},
   issn={1661-7738},
   %review={\MR{2336501}},
   label={Puppe}
}

\bib{Rosenzweig}{article}{
   author={Rosenzweig, Harry L.},
   title={Bordism of involutions on manifolds},
   journal={Illinois J.\ Math.},
   volume={16},
   date={1972},
   pages={1--10},
   issn={0019-2082},
   %review={\MR{0290386 (44 \#7568)}},
   label={Rosenzweig}
}

\bib{Rueff}{article}{
   author={Rueff, Marcel},
   title={Beiträge zur Untersuchung der Abbildungen von Mannigfaltigkeiten}*{language={german}},
   %language={German},
   journal={Compositio Math.},
   volume={6},
   date={1939},
   pages={161--202},
   issn={0010-437X},
   %review={\MR{1557021}},
   label={Rueff},
   eprint={http://www.numdam.org/item?id=CM_1939__6__161_0}
}

\bib{Saveliev99}{book}{
   author={Saveliev, Nikolai},
   title={Lectures on the topology of 3-manifolds},
   series={de Gruyter Textbook},
   subtitle={An introduction to the Casson invariant},
   publisher={Walter de Gruyter \& Co.},
   place={Berlin},
   date={1999},
   pages={x+199},
   isbn={3-11-016271-7},
   %review={\MR{1712769 (2001h:57024)}},
   label={Saveliev99}
}

\bib{Saveliev02}{book}{
   author={Saveliev, Nikolai},
   title={Invariants for homology 3-spheres},
   series={Encyclopaedia of Mathematical Sciences},
   volume={140},
   %note={Low-Dimensional Topology, I},
   publisher={Springer-Verlag}*{language={german}},
   place={Berlin},
   date={2002},
   pages={xii+223},
   isbn={3-540-43796-7},
   %review={\MR{1941324 (2004c:57026)}},
   label={Saveliev02}
}

\bib{Sela}{article}{
   author={Sela, Zlil},
   title={Endomorphisms of hyperbolic groups. I},
   subtitle={The Hopf property},
   journal={Topology},
   volume={38},
   date={1999},
   number={2},
   pages={301--321},
   issn={0040-9383},
   %review={\MR{1660337 (99m:20081)}},
   doi={doi:10.1016/S0040-9383(98)00015-9}
}

\bib{Siebenmann}{article}{
   author={Siebenmann, Laurent},
   title={On vanishing of the Rohlin invariant and nonfinitely amphicheiral homology 3-spheres},
   conference={
      title={Topology Symposium, Siegen 1979},
      address={Proc.\ Sympos., Univ.\ Siegen, Siegen},
      date={1979}
   },
   book={
      series={Lecture Notes in Math.},
      volume={788},
      publisher={Springer},
      place={Berlin},
   },
   date={1980},
   pages={172--222},
   %review={\MR{585660 (81k:57011)}},
   label={Siebenmann}
}

\bib{Wall60}{article}{
   author={Wall, C.\ T.\ C.},
   title={Determination of the cobordism ring},
   journal={Ann.\ of Math.\ (2)},
   volume={72},
   date={1960},
   pages={292--311},
   issn={0003-486X},
   %review={\MR{0120654 (22 \#11403)}},
   label={Wall60},
   eprint={http://www.jstor.org/stable/1970136}
}

\bib{Wall61}{article}{
   author={Wall, C.\ T.\ C.},
   title={Resolutions for extensions of groups},
   journal={Proc.\ Cambridge Philos.\ Soc.},
   volume={57},
   date={1961},
   pages={251--255},
   %review={\MR{0178046 (31 \#2304)}},
   label={Wall61},
   doi={10.1017/S0305004100035155}
}

\bib{Wall62}{article}{
   author={Wall, C.\ T.\ C.},
   title={Killing the middle homotopy groups of odd dimensional manifolds},
   journal={Trans.\ Amer.\ Math.\ Soc.},
   volume={103},
   date={1962},
   pages={421--433},
   issn={0002-9947},
   %review={\MR{0139185 (25 \#2621)}},
   label={Wall62},
   eprint={http://www.jstor.org/stable/1993837}
}

\bib{Whitehead1949}{article}{
   author={Whitehead, J.~H.~C.},
   title={Combinatorial homotopy. I},
   journal={Bull. Amer. Math. Soc.},
   volume={55},
   date={1949},
   pages={213--245},
   issn={0002-9904},
   %review={\MR{0030759 (11,48b)}},
   label={Whitehead1949},
   doi={10.1090/S0002-9904-1949-09175-9}
}

\bib{Zhubr}{article}{
   author={Zhubr, Alexey V.\kern.05em\relax},
   title={Closed simply connected six-dimensional manifolds: proofs of classification theorems},
%   language={Russian, with Russian summary},
%   journal={Algebra i Analiz},
%   volume={12},
%   date={2000},
%   number={4},
%   pages={126--230},
%   issn={0234-0852},
%   translation={
      journal={St.\ Petersburg Math.\ J.},
      volume={12},
      date={2001},
      number={4},
      pages={605--680},
      issn={1061-0022},
%   },
   %review={\MR{1793619 (2001j:57041)}},
   label={Zhubr}
}
\end{biblist}
\vspace{1cm}

\parindent0pt\parskip\medskipamount\raggedright
\textsc{Hausdorff Research Institute for Mathematics, Poppelsdorfer Allee 82, 53115 Bonn, Germany}\par
E-mail: \href{mailto:muellner@math.uni-bonn.de}{\texttt{muellner@math.uni-bonn.de\vphantom{/p}}}\par
\url{http://www.math.uni-bonn.de/people/muellner}
\end{document}